\numberwithin{equation}{section}
\newcommand{\Kzero}{\mathcal K_1}
\newcommand{\Kfirst}{\mathcal K_2}
\newcommand{\Kmore}{\bar {\mathcal K}_2}
\newcommand{\Kth}{\bar {\mathcal K}_1}
\newcommand{\Kfo}{\tilde{\mathcal K}_1}
\newcommand{\Kfv}{\tilde{\mathcal K}_2}
\newcommand{\Azero}{\mathcal A_1}
\newcommand{\Afirst}{\mathcal G_1}
\newcommand{\Amore}{\mathcal A_2}
\newcommand{\Ath}{\mathcal G_2}
\newcommand{\esssup}{\mathop{\mathrm{ess\,sup}}}
\newcommand{\beq}{\begin{equation}}
\newcommand{\eeq}{\end{equation}}
\newcommand{\beqs}{\begin{equation*}}
\newcommand{\eeqs}{\end{equation*}}
\newcommand{\ba}{\begin{array}}
\newcommand{\ea}{\end{array}}
\newcommand{\beas}{\begin{eqnarray*}}
\newcommand{\eeas}{\end{eqnarray*}}
\newcommand{\bea}{\begin{eqnarray}}
\newcommand{\eea}{\end{eqnarray}}
\newcommand{\bal}{\begin{align}}
\newcommand{\eal}{\end{align}}
\newcommand{\bals}{\begin{align*}}
\newcommand{\eals}{\end{align*}}
\newcommand{\R}{\ensuremath{\mathbb R}}
\newcommand{\N}{\ensuremath{\mathbb N}}
\newcommand{\norm}[1]{\| {#1} \|}
\newcommand{\bds}{\begin{displaystyle}}
\newcommand{\eds}{\end{displaystyle}}
 \def\longequals{\mathbin{=\kern-2pt=}}
 \def\eqdef{\mathbin{\buildrel \rm def \over \longequals}}
\def\varep{\varepsilon}
\def\ddt{\frac{d}{dt}}
\newtheorem{theorem}{Theorem}[section]
\newtheorem{lemma}[theorem]{Lemma}
\newtheorem{corollary}[theorem]{Corollary}
\newtheorem{proposition}[theorem]{Proposition}
\newtheorem{remark}[theorem]{Remark}
\theoremstyle{remark}
\title{Global Estimates for Generalized Forchheimer Flows of Slightly Compressible Fluids}
\author{Luan Hoang$^{a}$ and Thinh Kieu$^b$}
\date{\today}
\begin{document}
\maketitle

\begin{center}
\textit{$^a$Department of Mathematics and Statistics, Texas Tech University, Box 41042, Lubbock, TX 79409--1042, U. S. A.} \\
\textit{$^b$Department of Mathematics, University of North Georgia, Gainesville Campus, 3820 Mundy Mill Rd., Oakwood, GA 30566, U. S. A.}\\
Email addresses: \texttt{luan.hoang@ttu.edu, thinh.kieu@ung.edu}
\end{center}

\begin{center}
{\Large \textit{Dedicated to Professor Duong Minh Duc with gratitude.}}
\end{center}

\begin{abstract}
This paper is focused on the generalized Forchheimer flows of slightly compressible fluids in porous media. They  are reformulated as a degenerate parabolic equation for the pressure. The initial boundary value problem is studied with time-dependent Dirichlet boundary data.  The estimates up to the boundary and for all time are derived for the $L^\infty$-norm of the pressure, its gradient and time derivative. Large-time estimates are established to be independent of the initial data.
Particularly, thanks to the special structure of the pressure's nonlinear equation, the global gradient estimates are obtained in a relatively simple way, avoiding complicated calculations and a prior requirement of H\"older estimates.
\end{abstract}

\pagestyle{myheadings}\markboth{L. Hoang and T. Kieu}
{Global Estimates for Generalized Forchheimer Flows}

\section{Introduction}
\label{intro}

In studies of fluid dynamics in porous media, Darcy's law is ubiquitously used.
However this linear relation between the velocity and pressure gradient is not always valid.
The deviation from Darcy's law is well-known  when the Reynolds number increases \cite{Muskatbook,BearBook}. Such a deviation was even noticed in early works by Darcy \cite{Darcybook} and Dupuit \cite{Dupuit1857}.
Nonlinear alternatives were formulated by Forchheimer \cite{Forchh1901,ForchheimerBook}, and were studied  extensively afterward in physics and engineering, see \cite{Muskatbook,Ward64,BearBook,NieldBook,StraughanBook} and references therein. 
In contrast to the vast mathematical research of Darcy's flows, see e.g. \cite{VazquezPorousBook},
existing mathematical papers on Forchheimer flows are much fewer and came much later. 
Even rarer are the ones for compressible fluids.
(See, e.g., \cite{ABHI1,HI1,HI2} for more introduction to Forchheimer flows.) 
Generalized Forchheimer equations were proposed \cite{ABHI1,HI1,HI2,HIKS1,HKP1} in order to cover a general class of fluid flows in porous media formulated from experiments.
In our previous paper \cite{HK1}, we derive interior estimates for generalized Forchheimer flows of slightly compressible fluids. In this article, we focus on the spatially global estimates, i.e., on the entire domain.
Since the equation for pressure is of degenerate parabolic type, finding $L^\infty$-bounds for its gradient, in general, is a difficult task and requires much work, see, for e.g.,  \cite{VazquezSmoothBook,RVV2009,RVV2010,LD2000}. 
We will show that the gradient estimates for the particular flows under the current study  can be obtained in a relatively simple way thanks to their equations' specific structure.

In this paper,  we consider the initial boundary value problem for the pressure in a bounded domain with time-dependent Dirichlet boundary data.
Our goal is to obtain bounds for the pressure's gradient and time derivative in terms of initial and boundary data.
Aiming at studying the long-term dynamics of the system, we also emphasize the bounds for large time.
The standard technique from \cite{LadyParaBook68}, see also \cite{LiebermanPara96, DiDegenerateBook}, requires a H\"older estimate first, which, by itself, is not a small task.
Moreover, the estimates obtained in \cite{LadyParaBook68} for degenerate equations depend on the initial value of the gradient. This will result in long-time estimates for the gradient that are dependent on the initial data.
That means that the long-time dynamics cannot be reduced to a much smaller set, say, the global attractor (c.f. \cite{TemamDynBook,SellYouBook}).
In this paper, we will demonstrate that these two setbacks can be overcome without overcomplicated calculations.
First, we extend the techniques for parabolic equations in \cite{LadyParaBook68}   to our degenerate one in a suitable way. Such extension is possible due to the equation's special structure. Second, by doing analysis on the entire domain, we avoid the spatially local consideration, and hence, the requirement for the H\"older estimates. 
Finally, localizing the estimates in time and utilizing uniform Gronwall-type inequalities remove the estimates' dependence on the initial data.

The paper is organized as follows.
In section \ref{prelim}, we recall basic facts about generalized Forchheimer equations, and prove a global embedding of Ladyzhenskaya-Uraltseva type.  It is different from the versions in \cite{LadyParaBook68, HK1} which are localizations at an interior or a boundary point. 
This will contribute to simpler proofs for the global estimates in section \ref{PGradS}.
In section \ref{revision},  we review some relevant  estimates from our previous works \cite{HIKS1,HK1} which will be used repeatedly.  %
In section \ref{PSec}, we use De Giorgi's iteration to obtain (spatially) global estimates.
Time-local inequality \eqref{Ne1} is ``quasi-homogeneous" in terms of $L_{x,t}^\alpha$-norm of the solution.
This improves estimates in our previous works, and is similar to a recent improvement on interior estimates in \cite{Surnachev2012}. Explicit estimates in terms of initial and boundary data are established in Theorem \ref{Ntheo43}.
Section \ref{PGradBdry} is devoted to estimating the maximum of the gradient's modulus on the boundary.
Unlike the results in \cite{LadyParaBook68}, we are able to localize the estimate in time in Theorem \ref{theo51}, hence, it is independent of the initial gradient.
This is essential to finding the gradient bounds for large time, by combining it with certain uniform Gronwall-type estimates. This is demonstrated in Corollary \ref{Cor5}.
In section \ref{PGradS}, we establish global $L^s$-estimates for the gradient \emph{for all} $s>0$ and all positive time. This requires only moderate regularity for the initial data, say, $L^\alpha$ and $W^{1,2-a}$ with an appropriate $\alpha>0$. 
Section \ref{PGradInfty} contains $L^\infty$-estimates for the gradient.
Thanks to the special form of the resulting equations \eqref{um} for the gradient, it is possible to apply the De Giorgi technique, see Theorem \ref{GradUni}.
Section \ref{PTSec} has the same results as section \ref{PGradInfty} but for the time derivative. The main estimates are in Theorems \ref{ptthm} and \ref{ptlimthm}.

\section{Preliminaries}
\label{prelim}

Consider a fluid  in a porous medium  in space $\R^n$. For physics problem $n=3$, but here we consider any $n\ge 2$. Let $x\in \R^n$ and $t\in\R$  be the spatial and time variables.
The fluid flow has velocity $v(x,t)\in \R^n$, pressure $p(x,t)\in \R$ and density $\rho (x,t)\in [0,\infty)$.

The generalized Forchheimer equations studied in \cite{ABHI1,HI1,HI2,HIKS1,HKP1} are of the the form:
\beq\label{gForch} g(|v|)v=-\nabla p,\eeq
where $g(s)\ge 0$ is a function defined on $[0,\infty)$.
When $g(s)=\alpha, \alpha +\beta s,\alpha +\beta s+\gamma s^2,\alpha +\gamma_m s^{m-1}$, where $\alpha,\beta,\gamma,m,\gamma_m$ are empirical constants, we have Darcy's law, Forchheimer's two-term, three-term and power laws, respectively.

In this paper, we study the case when the function $g$ in \eqref{gForch} is a generalized polynomial with positive coefficients, that is,
\beq\label{gsa} g(s) =a_0s^{\alpha_0}+a_1 s^{\alpha_1}+\ldots +a_N s^{\alpha_N}\quad\text{for}\quad s\ge 0,\eeq
where  
$N\ge 1$, the powers $\alpha_0=0<\alpha_1<\ldots<\alpha_N$ are fixed real numbers (not necessarily integers), the coefficients $a_0,a_1,\ldots,a_N$ are positive.
This function $g(s)$ is referred to as the Forchheimer polynomial in equation \eqref{gForch}. 

From \eqref{gForch} one can solve $v$ implicitly in terms of $\nabla p$ and
derives a nonlinear version of Darcy's equation:
\beq\label{u-forma} v= -K(|\nabla p|)\nabla p,\eeq
where the function $K:[0,\infty)\to[0,\infty)$  is defined by
\beq\label{Kdef} K(\xi)=\frac1{g(s(\xi))},
\text{ with } s=s(\xi)\ge 0 \text{ satisfying } sg(s)=\xi, \ \text{ for }\xi\ge 0. \eeq

In addition to \eqref{gForch}, we have the continuity equation 
\beq\label{conti-eq} 
\phi\frac{\partial \rho}{\partial t}+\nabla\cdot(\rho v)=0,\eeq
where number $\phi\in(0,1)$ is the constant porosity.
Also, for slightly compressible fluids, the equation of state is 
\beq\label{slight-compress} 
\frac{d\rho}{dp}=\frac{\rho}{\kappa},\quad\text{with } \kappa=const.>0.
\eeq

From \eqref{u-forma}, \eqref{conti-eq} and \eqref{slight-compress} one derives a scalar equation for the pressure:
\beq\label{dafo-nonlin} 
\phi\frac{\partial p}{\partial t}=\kappa\nabla \cdot (K(|\nabla p|)\nabla p) + K(|\nabla p|)|\nabla p|^2.\eeq

On the right-hand side of \eqref{dafo-nonlin}, the constant $\kappa$ is very large  for most slightly compressible fluids in porous media \cite{Muskatbook}, hence we neglect its second term  and by scaling the time variable, we study the reduced equation
\beq\label{lin-p} 
\frac{\partial p}{\partial t} = \nabla\cdot (K(|\nabla p|)\nabla p).\eeq
Note that this reduction is commonly used in engineering.

Our aim is to study the initial boundary value problem for equation \eqref{lin-p} in a bounded domain.
Here afterward $U$ is a bounded, open, connected subset of $\mathbb{R}^n$, $n=2,3,\ldots$ with $C^2$ boundary $\Gamma=\partial U$. 
Throughout, 
the Forchheimer polynomial $g(s)$ in \eqref{gsa} is fixed.
The following number is frequently used in our  calculations:
\beq\label{ab} a=\frac{\alpha_N}{1+\alpha_N}\in(0,1).
\eeq

The function $K(\xi)$ in \eqref{Kdef}  has the following properties (c.f. \cite{ABHI1,HI1}): it is decreasing in $\xi$ mapping $\xi\in[0,\infty)$ onto $(0,1/a_0]$, and
\beq\label{K-est-3}
\frac{d_1}{(1+\xi)^a}\le K(\xi)\le \frac{d_2}{(1+\xi)^a},
\eeq
\beq\label{Kestn}
d_3(\xi^{2-a}-1)\le K(\xi)\xi^2\le d_2\xi^{2-a},
\eeq
\beq\label{K-est-2}
-a K(\xi)\le  K'(\xi)\xi\le 0,
\eeq
where $d_1$, $d_2$, $d_3$ are positive constants depending on $g$.
As in previous works, we use the function $H(\xi)$ defined by 
\beqs H(\xi)=\int_0^{\xi^2}K(\sqrt s)ds\quad \hbox{for } \xi\ge 0.\eeqs
It satisfies $K(\xi)\xi^2\leq H(\xi)\le 2K(\xi)\xi^2$, thus, by \eqref{Kestn},
\beq\label{Hcompare} d_3(\xi^{2-a}-1)\leq H(\xi)\le 2d_2\xi^{2-a}.\eeq


The following parabolic Poincar\'e-Sobolev inequalities are needed for our study. 
For each $T>0$, denote $Q_T=U\times (0,T)$.
We define a threshold exponent 
\beq\label{alphastar} 
\alpha_*=\frac{an}{2-a}.\eeq

\begin{lemma}[cf. \cite{HK1}, Lemma 2.1] \label{Sob4}
Assume
\beq\label{alcond}
\alpha \ge 2 \quad \text{and}\quad  \alpha>\alpha_*.
\eeq
Let 
\beq\label{expndef}
p=\alpha\Big(1+\frac{2-a}n\Big)-a.
\eeq
Then
\beq\label{nonzerobdn}
\|u\|_{L^p(Q_T)}\le C(1+\delta T)^{1/p}[[u]],
\eeq
where $\delta=1$ in general, $\delta=0$ in case $u$ vanishes on the boundary $\partial U$, and
\beq\label{udouble}
[[u]]=\esssup_{[0,T]}\|u(\cdot,t)\|_{L^\alpha(U)}+\Big(\int_0^T\int_U |u(x,t)|^{\alpha-2}|\nabla u(x,t)|^{2-a}dx dt\Big)^\frac 1{\alpha-a}.
\eeq
\end{lemma}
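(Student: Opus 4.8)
The plan is to reduce the claimed inequality to the classical Ladyzhenskaya--Uraltseva / Gagliardo--Nirenberg parabolic estimate by choosing the exponents so that the quantity $[[u]]$ controls the right interpolation of $u$. First I would introduce the auxiliary function $w = |u|^{\beta}$ with $\beta$ chosen so that $\nabla w$ involves $|u|^{\beta-1}\nabla u$ and the exponent on $u$ inside $[[u]]$, namely $\alpha-2$ for the gradient term, matches up after raising to a suitable power; concretely one wants $w\in L^2(0,T;W^{1,q}(U))$ for an appropriate $q\le 2$ coming from the power $2-a$, together with $w\in L^\infty(0,T;L^r(U))$ from the $L^\alpha$-bound. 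The presence of the degenerate exponent $2-a<2$ is exactly what forces the Sobolev exponent $p$ in \eqref{expndef} to have the shifted form $\alpha(1+(2-a)/n)-a$ rather than the usual $\alpha(1+2/n)$; tracking this shift carefully is the first bookkeeping step.

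Next I would apply the standard anisotropic embedding: for a function in $L^\infty(0,T;L^r(U))\cap L^{q}(0,T;W^{1,q}(U))$ on a bounded domain, one has a bound in $L^{q(1+r/n\cdot(\text{something}))}(Q_T)$ by interpolating the spatial Sobolev inequality $\|w(\cdot,t)\|_{L^{q^*}(U)}\le C(\|\nabla w(\cdot,t)\|_{L^q(U)}+\|w(\cdot,t)\|_{L^q(U)})$ in time against the uniform $L^r$ bound, then using Hölder in the time variable. The term $\|w\|_{L^q(U)}$, which is where the lower-order contribution to the Sobolev inequality lives, is handled differently depending on whether $u$ vanishes on $\partial U$: if it does, the Poincaré inequality kills it outright and no $T$-dependent constant appears, giving $\delta=0$; if not, one absorbs $\|w\|_{L^q(U)}$ into the $L^\infty L^\alpha$ part at the cost of a factor that, after the time integration, produces the $(1+T)^{1/p}$ growth, giving $\delta=1$. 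The conditions \eqref{alcond}, i.e. $\alpha\ge 2$ and $\alpha>\alpha_*=an/(2-a)$, are precisely what guarantee the relevant exponents are positive and that $q^*$ is finite (so the embedding is subcritical), so I would verify these inequalities translate into $q<n$ and the interpolation weights lying in $[0,1]$.

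The main obstacle I anticipate is the exponent arithmetic: one must choose $\beta$ and $q$ so that $\beta q = \alpha$ (to match the $L^\infty$-in-time norm), $q(\beta-1) = \alpha-2$ with the gradient power $q$ replaced by $2-a$ (forcing $q = 2-a$ and then $\beta = \alpha/(2-a)$, whence $\alpha/(2-a) - 1 = (\alpha-2+a)/(2-a)$ must equal $(\alpha-2)/q$... so in fact the correct reading is that $\int |u|^{\alpha-2}|\nabla u|^{2-a} \sim \int |\nabla w|^{2-a}$ up to a constant power, and the overall exponent $\alpha-a$ in the denominator of \eqref{udouble} is chosen to make $[[u]]$ scale-homogeneous of degree one), and then check that the resulting mixed-norm exponent from the anisotropic embedding is exactly $p$ from \eqref{expndef}. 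This is a finite computation with no conceptual difficulty but requires care; once the exponents check out, the inequality \eqref{nonzerobdn} follows by combining the spatial Sobolev/Poincaré inequality, Hölder in $t$, and the definition of $[[u]]$, with the constant $C$ depending only on $n$, $\alpha$, $a$, and $U$. Since the statement is quoted as Lemma 2.1 of \cite{HK1}, I would present this as a sketch and refer to that paper for the full exponent verification, noting only the dichotomy in $\delta$ as the one point deserving explicit comment here.
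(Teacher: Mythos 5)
The paper itself gives no proof of Lemma~\ref{Sob4}; it is quoted from Lemma~2.1 of \cite{HK1}. Your sketch follows the standard route for such weighted parabolic Gagliardo--Nirenberg estimates -- substitute $w=|u|^\beta$, view $w$ as lying in $L^\infty(0,T;L^r(U))\cap L^{2-a}(0,T;W^{1,2-a}(U))$, interpolate against the spatial Sobolev embedding, integrate in time -- and this is indeed the right approach. The $\delta$-dichotomy (Poincar\'e kills the lower-order term when $u|_{\partial U}=0$, otherwise a factor of $T$ appears) is also correctly identified, as is the role of $\alpha>\alpha_*$ in keeping the interpolation subcritical.

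However, the exponent bookkeeping that you flag as the main obstacle contains a genuine error, which you notice but do not resolve. The two constraints you impose, $\beta q=\alpha$ and $(\beta-1)q=\alpha-2$ with $q=2-a$, are jointly inconsistent (subtracting forces $q=2$). The source of the confusion is conflating the gradient integrability exponent $q=2-a$ with the spatial exponent $r$ in the $L^\infty_t L^r_x$ bound on $w$; these are different. The correct choice is fixed solely by the gradient term: from
$\nabla w=\beta|u|^{\beta-1}\,\mathrm{sgn}(u)\,\nabla u$
one needs $(\beta-1)(2-a)=\alpha-2$, hence
$\beta=\dfrac{\alpha-a}{2-a}$
(not $\alpha/(2-a)$), and then $\int_U|\nabla w|^{2-a}dx=\beta^{2-a}\int_U|u|^{\alpha-2}|\nabla u|^{2-a}dx$ \emph{exactly}, not merely ``up to a constant power.'' The $L^\infty_tL^r_x$ exponent is then forced to be $r=\alpha/\beta=\dfrac{\alpha(2-a)}{\alpha-a}$, and one checks $\alpha\ge2\Leftrightarrow\beta\ge1$ and $\alpha\ge\alpha_*\Leftrightarrow r\le q^*=\frac{n(2-a)}{n-2+a}$. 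The anisotropic embedding then gives $w\in L^s(Q_T)$ with $s=(2-a)\frac{n+r}{n}$, and the final exponent is
$p=\beta s=\frac{\alpha-a}{2-a}\cdot(2-a)\cdot\frac{n+\frac{\alpha(2-a)}{\alpha-a}}{n}=\alpha-a+\frac{\alpha(2-a)}{n}=\alpha\Bigl(1+\frac{2-a}{n}\Bigr)-a,$
which matches \eqref{expndef}. With this correction the sketch closes; the $1/(\alpha-a)$ power in \eqref{udouble} is, as you say, exactly what makes $[[u]]$ homogeneous of degree one, and the resulting constant depends only on $n$, $a$, $\alpha$, and $U$.
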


The next is a particular embedding with spatial weights from Lemma 2.4 of \cite{HKP1} (see inequality (2.28) with $m=2$ there).

\begin{lemma}[cf. \cite{HKP1}, Lemma 2.4]\label{WSobolev}
Given $W(x,t)>0$ on $Q_T$.   
Let $r$ be a number that satisfies 
\beq\label{rcond} \frac{2n}{n+2}< r<  2.\eeq 
Set
\beq\label{sstar}
\varrho =\varrho(r)\eqdef 4(1-1/r^*).
\eeq
Then 
\beq\label{Wei1}
\|u\|_{L^\varrho (Q_T)}
\le C  [[u]]_{2,W;T} \Big\{\delta T^{\frac 1\varrho}+  \esssup_{t\in[0,T]} \Big (\int_U  W(x,t)^{-\frac r{2-r}} \chi_{{\rm supp}\,u}(x,t)dx\Big )^\frac{2-r}{\varrho r}\Big \},
\eeq
where $\delta=1$ in general, $\delta=0$ in case $u$ vanishes on the boundary $\partial U$,  and 
\beq\label{normquad}
[[u]]_{2,W;T}=\esssup_{[0,T]} \|u(\cdot,t)\|_{L^2(U)}+\Big(\int_0^T \int_U W(x,t)|\nabla u(x,t)|^2 dx dt\Big)^\frac 1 2.
\eeq
\end{lemma}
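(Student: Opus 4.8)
The plan is to work on a fixed time slice $t\in[0,T]$ and then integrate in $t$, combining three standard ingredients: a weighted H\"older inequality that converts $\|\nabla u(\cdot,t)\|_{L^r(U)}$ into the weighted quantity $\int_U W(\cdot,t)|\nabla u(\cdot,t)|^2\,dx$, the Sobolev embedding $W^{1,r}(U)\hookrightarrow L^{r^*}(U)$ on the $C^2$ domain $U$, and Ladyzhenskaya's interpolation between $L^2(U)$ and $L^{r^*}(U)$. Here $r^*=nr/(n-r)$ denotes the Sobolev conjugate of $r$. The hypothesis \eqref{rcond} enters in precisely two places: since $n\ge2$, the upper bound $r<2$ gives $r<n$, hence $r^*<\infty$; and the lower bound $r>\frac{2n}{n+2}$ is equivalent to $r^*>2$, which is what makes the interpolation exponents work out.

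First I would fix $t$ so that $u(\cdot,t)\in W^{1,2}(U)$ and note that $\nabla u(\cdot,t)=0$ a.e.\ on $U\setminus\supp u(\cdot,t)$. H\"older's inequality with conjugate exponents $\frac2r$ and $\frac2{2-r}$, applied to $|\nabla u|^r=(W|\nabla u|^2)^{r/2}W^{-r/2}$ on $\supp u(\cdot,t)$, then gives
\[
\|\nabla u(\cdot,t)\|_{L^r(U)}\le\Big(\int_U W(x,t)|\nabla u(x,t)|^2\,dx\Big)^{1/2}A(t),\qquad A(t):=\Big(\int_U W(x,t)^{-\frac r{2-r}}\chi_{\supp u}(x,t)\,dx\Big)^{\frac{2-r}{2r}}.
\]
The Sobolev inequality on $U$ gives $\|u(\cdot,t)\|_{L^{r^*}(U)}\le C(\|\nabla u(\cdot,t)\|_{L^r(U)}+\|u(\cdot,t)\|_{L^r(U)})$, and since $r<2$ with $U$ bounded, $\|u(\cdot,t)\|_{L^r(U)}\le C\|u(\cdot,t)\|_{L^2(U)}$; hence, in general, $\|u(\cdot,t)\|_{L^{r^*}(U)}\le C(\|\nabla u(\cdot,t)\|_{L^r(U)}+\|u(\cdot,t)\|_{L^2(U)})$, while when $u(\cdot,t)$ vanishes on $\partial U$ the Poincar\'e inequality removes the zeroth-order term. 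This dichotomy accounts for the factor $\delta$ in \eqref{Wei1}.

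Next I would interpolate, $\|u(\cdot,t)\|_{L^\varrho(U)}\le\|u(\cdot,t)\|_{L^2(U)}^{1-\theta}\|u(\cdot,t)\|_{L^{r^*}(U)}^{\theta}$ with $\frac1\varrho=\frac{1-\theta}2+\frac\theta{r^*}$. The point of the definition \eqref{sstar}, $\varrho=4(1-1/r^*)$, is that it is exactly the value for which the interpolation parameter becomes $\theta=2/\varrho$; one then checks $2<\varrho\le r^*$ (equivalent to $(r^*-2)^2\ge0$) and $0<\theta<1$ (equivalent to $r^*>2$), so the interpolation is legitimate. Raising it to the power $\varrho$, integrating over $[0,T]$, and pulling $P:=\esssup_{[0,T]}\|u(\cdot,t)\|_{L^2(U)}$ out of the time integral, the equality $\varrho\theta=2$ leaves
\[
\|u\|_{L^\varrho(Q_T)}^\varrho\le P^{\varrho-2}\int_0^T\|u(\cdot,t)\|_{L^{r^*}(U)}^2\,dt\le C\,P^{\varrho-2}\Big(\big(\esssup_{[0,T]}A(t)^2\big)\int_0^T\!\!\int_U W|\nabla u|^2\,dx\,dt+\delta\,T\,P^2\Big).
\]
Writing $Q:=\big(\int_0^T\int_U W|\nabla u|^2\,dx\,dt\big)^{1/2}$, so that $[[u]]_{2,W;T}=P+Q$, and using $P^{\varrho-2}Q^2\le(P+Q)^\varrho$, $P^\varrho\le(P+Q)^\varrho$ together with the subadditivity of $x\mapsto x^{1/\varrho}$, taking the $\varrho$-th root yields
\[
\|u\|_{L^\varrho(Q_T)}\le C\,[[u]]_{2,W;T}\Big(\big(\esssup_{[0,T]}A(t)^2\big)^{1/\varrho}+\delta\,T^{1/\varrho}\Big),
\]
which is \eqref{Wei1}, since $A(t)^{2/\varrho}=\big(\int_U W(x,t)^{-\frac r{2-r}}\chi_{\supp u}(x,t)\,dx\big)^{\frac{2-r}{\varrho r}}$.

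The step I expect to be the main obstacle is the exponent bookkeeping around the interpolation: one must choose $\varrho$ so that the interpolation weight is precisely $2/\varrho$, for only then does the weighted gradient term enter to the first power and with no leftover power of $T$; this is exactly what forces $\varrho=4(1-1/r^*)$, and the constraints $2<\varrho\le r^*$ and $\theta<1$ are where \eqref{rcond} is genuinely needed. The remaining pieces — H\"older's inequality, the Sobolev embedding and the Poincar\'e inequality on a bounded $C^2$ domain — are entirely routine.
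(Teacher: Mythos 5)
Your proof is correct. A quick audit of the bookkeeping: the Hölder step with exponents $2/r,\,2/(2-r)$ gives exactly $\|\nabla u(\cdot,t)\|_{L^r}\le A(t)\bigl(\int_U W|\nabla u|^2\,dx\bigr)^{1/2}$ with the $\chi_{\supp u}$ factor entering because $\nabla u=0$ a.e.\ off $\supp u(\cdot,t)$; the lower bound in \eqref{rcond} is equivalent to $r^*>2$, hence $\theta=2/\varrho\in(0,1)$; the choice $\varrho=4(1-1/r^*)$ is precisely what makes $\theta\varrho=2$, so integrating in time leaves $P^{\varrho-2}\int_0^T\|u(\cdot,t)\|_{L^{r^*}}^2\,dt$; the algebra $(r^*-2)^2\ge0$ gives $\varrho\le r^*$; and $P^{(\varrho-2)/\varrho}Q^{2/\varrho}\le P+Q$ and $\delta^{1/\varrho}=\delta$ close the argument. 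The paper does not prove this lemma — it is quoted from Lemma 2.4 of \cite{HKP1} — so there is no in-text proof to compare against, but your route (weighted Hölder on the spatial slice, Sobolev/Poincaré, Ladyzhenskaya interpolation calibrated so the gradient enters to the first power, then integrate in $t$) is the standard derivation for such parabolic embeddings and is in the spirit of Lemma 5.4, Ch.~II of \cite{LadyParaBook68}, which this lemma generalizes.
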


Above, ${\rm supp} f$ denotes the support set of a function $f$, and $\chi_A$ denotes the characteristic functions of a set $A$.

The following embedding is a global version of Lemma 3.7 from \cite{HKP1} and Lemma 5.4 on page 93 in \cite{LadyParaBook68}.

\begin{lemma}\label{LUK} 
Suppose $w$ is a function on $\bar U$ that satisfies $|\nabla w|\le M$ on $\Gamma$.

Let $v=\max\{|\nabla w|^2-M^2,0\}$.
For each $s\geq1$, there exists a constant $C>0$ depending on $s$ such that for any $k\in \R$, 
\begin{equation}\label{LUineq}
\int_U K(|\nabla w|) v^{s+1} dx \le  C \max|w-k|^2\int_U K(|\nabla w|)|\nabla^2 w|^2 v^{s-1}  dx
+  CM^4 \int_U K(|\nabla w|)v^{s-1}  dx .
\end{equation}
\end{lemma}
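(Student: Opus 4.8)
The plan is to mimic the classical Ladyzhenskaya–Uraltseva boundary estimate (Lemma 5.4 on p.~93 of \cite{LadyParaBook68}, and Lemma 3.7 of \cite{HKP1}), but to carry out the integration by parts over all of $U$ rather than on a half-ball, inserting the weight $K(|\nabla w|)$ throughout and using the structural bound \eqref{K-est-2} to control the derivatives of $K$. First I would fix $s\ge 1$ and $k\in\R$, set $\bar w=w-k$ (so $\nabla\bar w=\nabla w$, $|\nabla^2\bar w|=|\nabla^2 w|$), and note that $v=\max\{|\nabla w|^2-M^2,0\}$ is Lipschitz, nonnegative, and \emph{vanishes on $\Gamma$} because $|\nabla w|\le M$ there; this is precisely what kills the boundary term when we integrate by parts. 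The starting identity is
\beq\label{PP1}
\int_U K(|\nabla w|)\,v^{s+1}\,dx=\int_U K(|\nabla w|)\,v^{s}\,\bigl(|\nabla w|^2-M^2\bigr)\,dx
= \sum_{i=1}^n\int_U K(|\nabla w|)\,v^{s}\,\partial_i w\,\partial_i w\,dx - M^2\int_U K(|\nabla w|)v^s\,dx,
\eeq
valid on the set $\{v>0\}$ where $v^{s+1}=v^s(|\nabla w|^2-M^2)$ (on $\{v=0\}$ both sides contribute nothing). The second term on the right is already of the form of the last term in \eqref{LUineq} once we bound $v^s\le C v^{s-1}(v+M^2)$... more carefully, I would instead keep $M^2\int K v^s\,dx$ and absorb it at the very end together with the main estimate, since $v^s = v^{s-1}\cdot v$ and $v\le |\nabla w|^2$; handling this cleanly is a minor bookkeeping point.

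The heart of the argument is the first sum in \eqref{PP1}. Write $\partial_i w\,\partial_i w = \partial_i(\bar w\,\partial_i w) - \bar w\,\partial_{ii}w$ is not quite it; the clean route is to integrate by parts in the $i$-th variable moving one copy of $\partial_i w$ onto the rest: for each $i$,
\beq\label{PP2}
\int_U K(|\nabla w|)\,v^{s}\,\partial_i w\cdot \partial_i w\,dx
= -\int_U \partial_i w \cdot \partial_i\Bigl(K(|\nabla w|)\,v^{s}\,\bar w\Bigr)\,dx + \int_{\Gamma}K(|\nabla w|)\,v^{s}\,\bar w\,(\partial_i w)\,\nu_i\,dS,
\eeq
and the boundary integral vanishes because $v=0$ on $\Gamma$. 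Expanding the derivative in the interior integral produces three types of terms: (a) $\bar w\,\partial_i w\,\partial_i w\,\partial_i\bigl(K(|\nabla w|)\bigr)$, where $\partial_i K(|\nabla w|) = K'(|\nabla w|)\,\partial_i(|\nabla w|)$ and $|\partial_i|\nabla w||\le |\nabla^2 w|$, so by \eqref{K-est-2} this is bounded in absolute value by $a\,K(|\nabla w|)\,|\bar w|\,|\nabla w|^2\,|\nabla^2 w|/|\nabla w| \lesssim K(|\nabla w|)\,|\bar w|\,|\nabla w|\,|\nabla^2 w|$ — wait, one must be slightly careful: $K'(\xi)\xi$ is controlled, not $K'(\xi)$ alone, so I would group $|\nabla w|\cdot|K'(|\nabla w|)|\le aK(|\nabla w|)$ and spend the matching factor of $|\nabla w|$ from the $\partial_i w\,\partial_i w$ product; (b) $s\,\bar w\,\partial_i w\,\partial_i w\,K(|\nabla w|)\,v^{s-1}\,\partial_i v$, with $|\partial_i v|\le 2|\nabla w||\nabla^2 w|$ on $\{v>0\}$; (c) $K(|\nabla w|)\,v^{s}\,(\partial_i w)^2$, summing over $i$ to $K(|\nabla w|)\,v^{s}\,|\nabla w|^2$. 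Terms (a) and (b), after summing over $i$ and using $|\nabla w|^2 = v + M^2$, are each bounded by
\beq\label{PP3}
C\int_U K(|\nabla w|)\,|\bar w|\,(v+M^2)\,|\nabla^2 w|\,v^{s-1}\,dx,
\eeq
since the extra powers of $|\nabla w|$ are either absorbed into $K'|\nabla w|\lesssim K$ or rewritten via $|\nabla w|^2 = v+M^2$. Term (c) reproduces $C\int K v^s|\nabla w|^2\,dx = C\int Kv^s(v+M^2)\,dx$, which feeds back into $\int K v^{s+1}$ with a coefficient we will control, plus a lower-order $M^2\int K v^s\,dx$ piece.

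At this point I would apply the Cauchy–Schwarz (or Young's) inequality to \eqref{PP3}, splitting $v^{s-1}=v^{(s-1)/2}\cdot v^{(s-1)/2}$ and $(v+M^2)\le v+M^2$ into the two natural pieces:
\beq\label{PP4}
K^{1/2}|\nabla^2 w|\,v^{(s-1)/2}\quad\text{paired with}\quad K^{1/2}|\bar w|\,(v+M^2)\,v^{(s-1)/2},
\eeq
so that Young's inequality with a small parameter $\varepsilon$ gives
\beq\label{PP5}
C\int_U K\,|\bar w|\,(v+M^2)\,|\nabla^2 w|\,v^{s-1}\,dx
\le \varepsilon\int_U K\,(v+M^2)\,v^{s-1}\,dx + \frac{C}{\varepsilon}\,\max|\bar w|^2\int_U K\,(v+M^2)\,|\nabla^2 w|^2\,v^{s-1}\,dx.
\eeq
The first term is $\varepsilon\int K v^{s}\,dx + \varepsilon M^2\int K v^{s-1}\,dx$; the $\int Kv^s\,dx$ part I would further dominate using $v^s\le v^{s+1}/\text{(something)}$ — actually the cleanest is to observe $v^s\le v^{s-1}(v+M^2)$, hmm, which circles back, so instead I'd carry $\int Kv^s\,dx$ as a genuinely intermediate quantity and, at the end, note $v^s|\nabla w|^2$-type terms and $v^{s+1}$-type terms can be collected on the left once $\varepsilon$ is small, while everything proportional to $\max|\bar w|^2\int K|\nabla^2 w|^2 v^{s-1}\,dx$ and to $M^4\int K v^{s-1}\,dx$ (the latter coming from $M^2\cdot M^2\int Kv^{s-1}$ and from $\varepsilon M^2\int K v^{s-1}\le M^4\int K v^{s-1}$ when $\varepsilon\le M^2$, or is simply absorbed when $M$ is bounded below — one can always add $CM^4\int Kv^{s-1}$ harmlessly) lands on the right-hand side in exactly the shape of \eqref{LUineq}. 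Replacing $\max|\bar w| = \max|w-k|$ completes the proof.

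The main obstacle, as I see it, is purely bookkeeping: correctly tracking the powers of $|\nabla w|$ so that every appearance of $K'(|\nabla w|)$ is matched by a spare factor $|\nabla w|$ (to invoke \eqref{K-est-2}), and keeping the lower-order terms — those with $M^2\int Kv^s$, $M^2\int K v^{s-1}$, $M^4\int Kv^{s-1}$ — from proliferating, by consistently rewriting $|\nabla w|^2=v+M^2$ and absorbing the $v$-parts (with small coefficient) into the left-hand side $\int K v^{s+1}$. There is no analytic subtlety beyond the validity of the integration by parts, which is justified since $w$ is smooth enough for $|\nabla^2 w|$ to appear (the estimate is applied to already-regularized solutions) and the test factor $K(|\nabla w|)v^s\bar w$ vanishes on $\Gamma$; a standard approximation/truncation argument handles any borderline regularity.
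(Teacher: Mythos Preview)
Your overall plan is exactly the paper's: bound $\int_U K(|\nabla w|)v^{s+1}\,dx$ by $\int_U K(|\nabla w|)v^{s}|\nabla w|^2\,dx$, write $|\nabla w|^2=\sum_i\partial_i w\,\partial_i(w-k)$, integrate by parts using $v=0$ on $\Gamma$, control the $K'$ contribution via \eqref{K-est-2}, rewrite $|\nabla w|^2=v+M^2$, and close with Cauchy's inequality. However, your integration-by-parts identity \eqref{PP2} is written incorrectly, and this error propagates into your term (c). The correct step (the one the paper uses) leaves $(w-k)$ undifferentiated:
\[
\int_U K(|\nabla w|)\,v^s\,\partial_i w\,\partial_i(w-k)\,dx
= -\int_U (w-k)\,\partial_i\bigl(K(|\nabla w|)\,v^s\,\partial_i w\bigr)\,dx,
\]
the boundary term vanishing since $v^s=0$ on $\Gamma$. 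Expanding $\partial_i(Kv^s\partial_i w)$ then yields three pieces, \emph{each} carrying a factor of $(w-k)$ and a second derivative: the $K'$ term, the $s\,v^{s-1}\partial_i v$ term, and $Kv^s\partial_{ii}w$ (summing to $Kv^s\Delta w$). Your term (c), by contrast, is $Kv^s(\partial_i w)^2$, i.e.\ the original integrand with no $|\nabla^2 w|$ factor; it reappears on the right with coefficient $1$ and cannot be ``absorbed with a coefficient we will control'' as you suggest, so the argument stalls there.

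Two minor simplifications from the paper: start with the inequality $v^{s+1}\le v^s|\nabla w|^2$ rather than your identity $v^{s+1}=v^s(|\nabla w|^2-M^2)$, which spares you the stray $M^2\int K v^s$ term; and once the correct expansion is in hand, the Cauchy step is cleaner if you pair $K^{1/2}v^{(s+1)/2}$ with $K^{1/2}v^{(s-1)/2}|\nabla^2 w||w-k|$ in the leading term (producing $\tfrac12 I$ to absorb plus the desired $\max|w-k|^2\int K|\nabla^2 w|^2v^{s-1}$) and pair $|\nabla^2 w||w-k|$ with $M^2$ in the lower-order term (producing $M^4\int Kv^{s-1}$ directly).
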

\begin{proof} 
Let $I =  \int_U K(|\nabla w|) v^{s+1} dx$.
Note that $v=0$ on $\Gamma$.
First, we see that 
\begin{align*}
I&\le   \int_U K(|\nabla w|) v^s |\nabla w|^2 dx
=\sum_{i=1}^n\int_U K(|\nabla w|) v^s \partial_i w \partial_i (w-k)  dx.
\end{align*}
By integration by parts,
\begin{align*}
I
&= - \sum_{i=1}^n\int_U \partial_i (K(|\nabla w|) v^s \partial_i w )\cdot  (w-k) dx\\
& =  - \sum_{i,j=1}^n\int_U\Big( K'(|\nabla w|)\frac{\partial_i\partial_j w\partial_j w}{|\nabla w|} \Big) v^s \partial_i w\cdot (w-k) dx - \int_U K(|\nabla w|)v^s \Delta w  \cdot (w-k) dx\\
&\quad - s\sum_{i,j=1}^n \int_U K(|\nabla w|)\Big( v^{s-1} \partial_i\partial_j w \partial_j w\Big)\cdot  \partial_i w \cdot (w-k) \chi_{\{v>0\}} dx.
\end{align*}
From this and \eqref{K-est-2}, it follows that 
\begin{align*}
I&\le a \int_U K(|\nabla w|)|\nabla^2 w|  v^s |w-k| dx  
+ \int_U K(|\nabla w|) v^s |\Delta w|  |w-k| dx\\
&\quad + s \int_U K(|\nabla w|) v^{s-1} |\nabla^2  w|  |\nabla w|^2  |w-k| \chi_{\{v>0\}} dx\\
& \le  C \int_U K(|\nabla w|) v^s |\nabla^2 w|  |w-k| dx
+ C\int_U K(|\nabla w|) |\nabla^2  w| v^{s-1} (v+M^2)  |w-k| dx.
\end{align*}
Hence, 
\begin{align*}
I
& \le  C \int_U K(|\nabla w|) v^s |\nabla^2 w|  |w-k| dx
+ CM^2\int_U K(|\nabla w|) |\nabla^2  w| v^{s-1}  |w-k| dx\\
& = C \int_U K(|\nabla w|) v^{\frac {s+1}2} \cdot |\nabla^2 w| v^{\frac{s-1}2}   |w-k| dx
+ C \int_U K(|\nabla w|) v^{s-1} |\nabla^2  w|  |w-k| \cdot  M^2 dx.
\end{align*}
This last inequality and Cauchy's inequality imply that 
\begin{align*}
I
& \le  \int_U K(|\nabla w|) (\frac{v^{s+1}}2+C |\nabla^2 w|^2 v^{s-1}   |w-k|^2) dx
+ C \int_U K(|\nabla w|) v^{s-1} (|\nabla^2  w|^2  |w-k|^2+ M^4) dx\\
& =  \frac{1}{2} I
  +C \int_U K(|\nabla w|)v^{s-1} |\nabla^2 w|^2 |w-k|^2 dx
+  CM^4 \int_U K(|\nabla w|)v^{s-1}  dx. 	
\end{align*}
Therefore, we obtain \eqref{LUineq}.
\end{proof}

The following is a generalization of the convergence of fast decay geometry sequences in Lemma 5.6, Chapter II of \cite{LadyParaBook68}. It will be used in the De Giorgi iterations.

\begin{lemma}[cf. \cite{HKP1}, Lemma A.2] \label{multiseq} 
Let $\{Y_i\}_{i=0}^\infty$ be a sequence of non-negative numbers satisfying
\beqs 
Y_{i+1}\le \sum_{k=1}^m A_k B^i  Y_i^{1+\mu_k}, \quad 
i =0,1,2,\cdots,
\eeqs
where  $B>1$, $A_k>0$  and $\mu_k>0$ for $k=1,2,\ldots,m$.
Let $\mu=\min\{\mu_k:1\le k\le m\}$.
If $Y_0\le \min\{ (m^{-1} A_k^{-1} B^{-\frac 1 {\mu}})^{1/\mu_k} : 1\le k\le  m\}$
then $\lim_{i\to\infty} Y_i=0$.
\end{lemma}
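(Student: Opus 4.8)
The plan is to prove the recursion $Y_{i+1}\le \sum_{k=1}^m A_k B^i Y_i^{1+\mu_k}$ forces $Y_i\to 0$ by showing inductively that the $Y_i$ decay at least as fast as a geometric sequence $C\theta^i$ for suitable $C>0$ and $\theta\in(0,1)$, provided the stated smallness of $Y_0$ holds. This is the standard device behind Lemma 5.6 in \cite{LadyParaBook68}; the only new feature here is the presence of $m$ terms with distinct exponents $\mu_k$, which one handles by estimating each term by the worst (smallest) exponent $\mu=\min_k\mu_k$ once $Y_i$ is already known to be $\le 1$.

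First I would fix the candidate decay rate. Set $\theta=B^{-1/\mu}\in(0,1)$ (recall $B>1$, $\mu>0$) and let $Y_0^*=\min\{(m^{-1}A_k^{-1}B^{-1/\mu})^{1/\mu_k}:1\le k\le m\}$ be the asserted threshold, so by hypothesis $Y_0\le Y_0^*$. Note $Y_0^*\le (m^{-1}A_k^{-1}B^{-1/\mu})^{1/\mu_k}\le 1$ for each $k$ (since $m\ge 1$, and one may assume without loss that the right-hand side is $\le 1$; if some $A_k$ is so small that it exceeds $1$, replace $A_k$ by $\max\{A_k,1\}$, which only weakens the recursion). The claim to prove by induction on $i$ is:
\[
Y_i\le Y_0^*\,\theta^i\qquad\text{for all }i\ge 0.
\]
The base case $i=0$ is the hypothesis.

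For the inductive step, assume $Y_i\le Y_0^*\theta^i$. Since $Y_0^*\le 1$ and $\theta<1$, we have $Y_i\le 1$, hence $Y_i^{1+\mu_k}\le Y_i^{1+\mu}$ for every $k$ (smaller base to a larger exponent). Therefore
\[
Y_{i+1}\le \Big(\sum_{k=1}^m A_k\Big)B^i Y_i^{1+\mu}.
\]
Now I would like to close this with a single dominant $A$; the cleanest route is instead to keep the sum and bound termwise: using $Y_i^{1+\mu_k}= Y_i\cdot Y_i^{\mu_k}$ and the inductive bound on the factor $Y_i^{\mu_k}\le (Y_0^*)^{\mu_k}\theta^{i\mu_k}\le (Y_0^*)^{\mu_k}\theta^{i\mu}$ (again since $\theta<1$ and $\mu_k\ge\mu$), we get
\[
Y_{i+1}\le Y_i\sum_{k=1}^m A_k B^i (Y_0^*)^{\mu_k}\theta^{i\mu}
= Y_i (B\theta^\mu)^i \sum_{k=1}^m A_k (Y_0^*)^{\mu_k}.
\]
By the choice $\theta=B^{-1/\mu}$ we have $B\theta^\mu=1$, so the $(B\theta^\mu)^i$ factor is exactly $1$. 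Finally, for each $k$, the threshold inequality $Y_0^*\le (m^{-1}A_k^{-1}B^{-1/\mu})^{1/\mu_k}$ raised to the power $\mu_k$ gives $A_k (Y_0^*)^{\mu_k}\le m^{-1}B^{-1/\mu}=m^{-1}\theta$, whence $\sum_{k=1}^m A_k(Y_0^*)^{\mu_k}\le \theta$. Combining, $Y_{i+1}\le Y_i\cdot\theta\le Y_0^*\theta^{i+1}$, which completes the induction. Since $\theta\in(0,1)$, $Y_i\le Y_0^*\theta^i\to 0$.

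**Main obstacle.** The computation itself is routine; the only subtlety is bookkeeping the two opposing effects of having $\theta<1$: it helps when raising $\theta^i$ to a larger power (so $\theta^{i\mu_k}\le\theta^{i\mu}$, used to collapse all exponents to $\mu$) but one must be careful that $Y_i\le 1$ is genuinely available before invoking $Y_i^{1+\mu_k}\le Y_i^{1+\mu}$ — this is why the induction must carry $Y_i\le Y_0^*\theta^i$ with $Y_0^*\le 1$ rather than merely $Y_i\to 0$. A secondary technical point is the normalization ensuring $Y_0^*\le 1$; if one does not want to assume $A_k\ge$ some bound, one simply notes that the hypothesis $Y_0\le Y_0^*$ is vacuous unless $Y_0^*$ is a genuine (finite, and for the argument, $\le 1$) number, or alternatively one runs the induction with $\widetilde Y_0^*=\min\{Y_0^*,1\}$, which still satisfies all the threshold inequalities used above.
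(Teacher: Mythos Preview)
Your proof is correct. The paper does not actually prove this lemma; it merely cites it from \cite{HKP1}, Lemma A.2, so there is no in-paper argument to compare against. Your termwise induction with the geometric ansatz $Y_i\le Y_0^*\theta^i$, $\theta=B^{-1/\mu}$, is the standard argument and goes through cleanly: the key identities $B\theta^\mu=1$ and $A_k(Y_0^*)^{\mu_k}\le m^{-1}\theta$ (from the threshold hypothesis) combine to give $Y_{i+1}\le \theta Y_i$.

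One remark on presentation: the detour through $Y_i^{1+\mu_k}\le Y_i^{1+\mu}$ and the accompanying discussion of whether $Y_0^*\le 1$ are unnecessary for the argument you actually run. Your termwise bound uses only $Y_i^{\mu_k}\le (Y_0^*)^{\mu_k}\theta^{i\mu_k}\le (Y_0^*)^{\mu_k}\theta^{i\mu}$ (valid for any $Y_0^*>0$ since $\theta<1$ and $\mu_k\ge\mu$), and then the threshold inequality on $(Y_0^*)^{\mu_k}$ directly. So the ``main obstacle'' you flag, and the normalization $A_k\mapsto\max\{A_k,1\}$, can simply be dropped.
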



\section{Previous results}
\label{revision}
We study the following initial boundary value problem (IBVP) for $p(x,t)$:
\beq\label{p:eq}
\begin{cases}
\begin{displaystyle}
\frac{\partial p}{\partial t} 
\end{displaystyle}
 = \nabla \cdot (K (|\nabla p|)\nabla p  )& \text {in }  U\times (0,\infty),\\
p(x,0)=p_0(x) &\text {in } U,\\
p(x,t)=\psi(x,t)& \text{on } \Gamma \times(0,\infty).
\end{cases}
\eeq

In order to deal with the non-homogeneous boundary condition, the data $\psi(x,t)$ with $x\in\Gamma$ and $t>0$ is extended to a function $\Psi(x,t)$ with $x\in \bar U$ and $t\ge 0$. Throughout, our results are stated in terms of $\Psi$ instead of $\psi$. Nonetheless, corresponding results in terms of $\psi$ can be retrieved as performed in \cite{HI1}. The function $\Psi$ is always assumed to have adequate regularities for all calculations in this paper.

\textbf{Solutions.}
It is proved in section 3 of \cite{HIKS1} that \eqref{p:eq} possesses a weak solution $p(x,t)$ for all $t>0$. 
It, in fact, has more regularity in spatial and time variables, see \cite{DiDegenerateBook}.
For the current study, we assume that solution $p(x,t)$ has sufficient regularities both in $x$ and $t$ variables such that our calculations hereafter can be performed legitimately. 
Specifically, we assume 
$p,\nabla p, p_t\in C(\bar U\times(0,\infty))$, the Hessian matrix of second spatial derivatives $\nabla^2 p\in C(U\times (0,\infty))$,
and the function $t\to p(\cdot,t)$ is continuous from $[0,\infty)$ to $W^{1,2-a}(U)\cap L^\alpha(U)$, for an appropriate $\alpha>0$ which will be determined for each type of estimate. Some particular estimates may require much less regularity such as the $L^\infty$-estimates in section \ref{PSec}.
The obtained estimates also hold without the $C^2$-requirement by using an approximation process, see \cite{LadyParaBook68}. However, to avoid further complications, we do not perform such approximation here.

\textbf{Generic constants notation.}
Hereafter, the symbols $C$ and $C'$ are used to denote positive numbers independent of the initial and boundary data, and the time variables $t$, $T_0$, $T$;
it may depend on many parameters, namely, exponents and coefficients of polynomial  $g$,  the spatial dimension $n$ and domain $U$, other involved exponents $\alpha$, $s$, etc. in calculations. The values of $C$ and $C'$ may vary from place to place, even on the same line.

\textbf{Functions and Norms.} Throughout, whenever unspecified, the norm Lebesgue or Sobolev norms mean for the whole domain $U$. Also, for a function $f(x,t)$, we use $t(t)$ to denote the functions $t\to f(\cdot,t)$. For example, $\|p(t)\|_{L^2}=\|p(\cdot,t)\|_{L^2(U)}$, $\max_{[0,T]}\|p(t)\|_{L^\infty}=\max_{t\in[0,T]}\|p(\cdot,t)\|_{L^\infty(U)}$.

We recall some relevant results from \cite{HIKS1}. For $\alpha\ge 1$, we define
\beq
A(\alpha,t)=\Big[\int_U|\nabla \Psi(x,t)|^\frac{\alpha(2-a)}{2} dx\Big]^\frac {2(\alpha-a)}{\alpha(2-a)} +\Big[\int_U  |\Psi_t(x,t)|^\alpha dx\Big]^\frac{\alpha-a}{\alpha(1-a)}
\eeq
for $t\ge 0$, and
\beq A(\alpha) = \limsup_{t\to\infty} A(\alpha,t)\quad \text {and}\quad \beta(\alpha)= \limsup_{t\to\infty}[A'(\alpha,t)]^-.
\eeq
Also, define for $\alpha>0$ the number
\beq
\widehat \alpha
=\max\big\{\alpha,2,\alpha_*\big\}.
\eeq

Whenever $\beta(\alpha)$ is in use, it is understood that the function $t\to A(\alpha,t)$ belongs to $C^1((0,\infty))$.

For a function $f:[0,\infty)\to\R$, we denote by $Env f$ a continuous and increasing function $F:[0,\infty)\to\R$ such that $F(t)\ge f(t)$ for all $t\ge 0$.

Let $p(x,t)$ be a solution to IBVP \eqref{p:eq}, and denote $\bar{p} = p-\Psi$.

\begin{theorem}[cf. \cite{HIKS1}, Theorem 4.3]\label{HIKS43}
Let $\alpha>0$.

{\rm (i)} For all $t\ge 0$,
 \beq\label{pbar:ineq1}
 \int_U|\bar{p}(x,t)|^\alpha dx \le C\Big(1+ \int_U|\bar{p}(x,0)|^{\widehat \alpha} dx
+ [ Env A(\widehat\alpha,t)]^\frac{\widehat\alpha}{\widehat\alpha-a}\Big ).
 \eeq

{\rm (ii)} If $A(\widehat \alpha)<\infty$ then
\beq\label{limsupPbar}
  \limsup_{t\rightarrow\infty} \int_U |\bar{p}(x,t)|^\alpha dx \le C\big (1+A(\widehat{\alpha})^\frac {\widehat{\alpha}} {\widehat{\alpha}-a} \big).
\eeq

{\rm (iii)} If $\beta(\widehat \alpha)<\infty$ then there is $T>0$ such that
 \beq\label{pbar:ineq2}
 \int_U|\bar{p}(x,t)|^\alpha dx \le C\big (1+\beta(\widehat{\alpha})^\frac {\widehat{\alpha}}{\widehat{\alpha}-2a} + A(\widehat{\alpha},t)^\frac{\widehat{\alpha}}{\widehat{\alpha}-a} \big )\quad \text{for all }t\ge T.
 \eeq
 
\end{theorem}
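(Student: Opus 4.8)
The plan is to reduce everything to a single ``logistic'' differential inequality for $y(t)=\int_U|\bar p(x,t)|^{\widehat\alpha}\,dx$ and to read off (i)--(iii) from it. Since $U$ is bounded and $\widehat\alpha\ge\alpha$, Hölder's inequality gives $\|\bar p(t)\|_{L^\alpha(U)}\le C\|\bar p(t)\|_{L^{\widehat\alpha}(U)}$, so it is enough to prove all three estimates with $\alpha$ replaced by $\widehat\alpha$ (note that $\widehat{\widehat\alpha}=\widehat\alpha$, so the hatted exponents in \eqref{pbar:ineq1}--\eqref{pbar:ineq2} are unchanged). Using $\widehat\alpha$ in place of $\alpha$ serves two purposes: $\widehat\alpha\ge2$ makes $|\bar p|^{\widehat\alpha-2}\bar p$ an admissible test function, and $\widehat\alpha\ge\alpha_*$ (see \eqref{alphastar}) is precisely the condition under which the spatial Sobolev--Poincar\'e inequality applied to a suitable power of $\bar p$ produces a \emph{superlinear} dissipation.

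For the energy estimate I would rewrite the equation in \eqref{p:eq} as $\partial_t\bar p=\nabla\cdot(K(|\nabla p|)\nabla p)-\Psi_t$ in $U$ with $\bar p=0$ on $\Gamma$, multiply by $|\bar p|^{\widehat\alpha-2}\bar p$, integrate over $U$ and integrate by parts; the boundary term vanishes and, writing $\nabla p=\nabla\bar p+\nabla\Psi$, the leading term is $(\widehat\alpha-1)\int_U K(|\nabla p|)|\nabla p|^2|\bar p|^{\widehat\alpha-2}\,dx$. I would use the two-sided bound \eqref{Kestn} to control this term from below by $\int_U|\nabla\bar p|^{2-a}|\bar p|^{\widehat\alpha-2}\,dx$ modulo lower-order terms, the upper bound in \eqref{K-est-3} to control the cross term $(\widehat\alpha-1)\int_U K(|\nabla p|)\nabla p\cdot\nabla\Psi\,|\bar p|^{\widehat\alpha-2}\,dx$, and Young's inequality on the remaining terms, tuning each split so that the $\bar p$-factors collapse onto the dissipative quantity $\int_U|\nabla\bar p|^{2-a}|\bar p|^{\widehat\alpha-2}\,dx$ with no residual $\int_U|\bar p|^{\widehat\alpha}\,dx$ term, while the $\Psi$-factors reproduce exactly the two terms defining $A(\widehat\alpha,t)$ (this is what forces the powers $\tfrac{2(\widehat\alpha-a)}{\widehat\alpha(2-a)}$ and $\tfrac{\widehat\alpha-a}{\widehat\alpha(1-a)}$). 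After absorbing a small multiple of the dissipation and invoking the Sobolev--Poincar\'e inequality on $w=|\bar p|^{(\widehat\alpha-a)/(2-a)}\in W^{1,2-a}_0(U)$ together with the inclusion $L^{q}(U)\subset L^{\widehat\alpha}(U)$ for $q=\tfrac{n(\widehat\alpha-a)}{n-2+a}\ge\widehat\alpha$ (the inequality $q\ge\widehat\alpha$ being equivalent to $\widehat\alpha\ge\alpha_*$), one arrives at
\beq\label{logistic}
\frac{d}{dt}\,y(t)+c_1\,y(t)^{\frac{\widehat\alpha-a}{\widehat\alpha}}\le C\big(1+A(\widehat\alpha,t)\big),\qquad t>0,
\eeq
for some constants $c_1>0$ and $C>0$ independent of the data and of $t$.

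The three assertions then follow from elementary analysis of \eqref{logistic}, using $\tfrac{\widehat\alpha-a}{\widehat\alpha}<1$. For (i): once $y(t)^{(\widehat\alpha-a)/\widehat\alpha}$ exceeds $2c_1^{-1}C\big(1+\sup_{[0,t]}A(\widehat\alpha,\cdot)\big)$ the right-hand side of \eqref{logistic} is dominated by the dissipative term, so $y$ cannot exceed $\max\{y(0),\,(2c_1^{-1}C(1+\sup_{[0,t]}A(\widehat\alpha,\cdot)))^{\widehat\alpha/(\widehat\alpha-a)}\}$; replacing the supremum by the increasing majorant $Env\,A(\widehat\alpha,t)$ and recalling $y(0)=\int_U|\bar p(x,0)|^{\widehat\alpha}\,dx$ yields \eqref{pbar:ineq1}. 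For (ii): if $A(\widehat\alpha)<\infty$, then for every $\delta>0$ the right-hand side of \eqref{logistic} is $\le C(1+A(\widehat\alpha)+\delta)$ for all large $t$, and the same comparison gives $\limsup_{t\to\infty}y(t)\le (c_1^{-1}C(1+A(\widehat\alpha)+\delta))^{\widehat\alpha/(\widehat\alpha-a)}$; letting $\delta\to0$ gives \eqref{limsupPbar}. For (iii): if $\beta(\widehat\alpha)<\infty$, then for all large $t$ one has $A(\widehat\alpha,s)\le A(\widehat\alpha,t)+\beta(\widehat\alpha)+\delta$ on $[t-1,t]$; integrating \eqref{logistic} over $[t-1,t]$ produces some $s_0\in[t-1,t]$ at which $y(s_0)$ is controlled by the corresponding balance value, and integrating \eqref{logistic} forward from $s_0$ to $t$ then bounds $y(t)$; a careful accounting of the way $[A'(\widehat\alpha,\cdot)]^-$ enters over this unit interval yields the stated exponents $\tfrac{\widehat\alpha}{\widehat\alpha-2a}$ and $\tfrac{\widehat\alpha}{\widehat\alpha-a}$. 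Undoing the reduction to $\widehat\alpha$ completes the proof.

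The delicate point is the energy step: the splits must be arranged so that \emph{no} term $\int_U|\bar p|^{\widehat\alpha}\,dx$ survives on the right-hand side of \eqref{logistic} — otherwise it degrades to a linear Gronwall inequality and the long-time bounds (ii), (iii) would retain a dependence on $y(0)$, defeating the purpose — while at the same time the $\Psi$-contributions must assemble \emph{exactly} into $A(\widehat\alpha,t)$; both requirements are met precisely because $\widehat\alpha\ge\alpha_*$, which pins down the relevant Sobolev conjugate exponents. The remaining work — the exponent bookkeeping in the Young estimates and the sharp power of $\beta(\widehat\alpha)$ in (iii) — is routine but must be carried out carefully.
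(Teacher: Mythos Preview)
The paper does not actually prove this theorem: it is quoted from \cite{HIKS1} (Theorem~4.3 there) in the ``Previous results'' section and no argument is given here. So there is no proof in the present paper to compare against. That said, your outline is exactly the standard route and almost certainly coincides with what is done in \cite{HIKS1}: reduce to $\widehat\alpha$, test \eqref{sys3} with $|\bar p|^{\widehat\alpha-2}\bar p$, use \eqref{K-est-3}--\eqref{Kestn} together with Poincar\'e--Sobolev applied to $|\bar p|^{(\widehat\alpha-a)/(2-a)}\in W^{1,2-a}_0(U)$ (which requires precisely $\widehat\alpha\ge\alpha_*$), and arrive at the scalar inequality $y'+c_1 y^{(\widehat\alpha-a)/\widehat\alpha}\le C(1+A(\widehat\alpha,t))$.

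Two small corrections. First, the exponent $(\widehat\alpha-a)/\widehat\alpha$ is strictly less than $1$, so the dissipation is \emph{sublinear}, not superlinear. This is harmless for your argument---indeed it is exactly what you use: the sublinear ODE $y'=-c_1 y^\gamma$ with $\gamma<1$ extinguishes in finite time, which is why (ii) and (iii) lose all memory of $y(0)$. Second, your derivation of (iii) is too sketchy. Integrating over a unit interval and bounding $A(\widehat\alpha,s)\le A(\widehat\alpha,t)+\beta(\widehat\alpha)+\delta$ only gives the exponent $\widehat\alpha/(\widehat\alpha-a)$ on $\beta$, not $\widehat\alpha/(\widehat\alpha-2a)$. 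The sharper exponent comes from comparing $y$ with a supersolution of the form $C_2(1+\beta(\widehat\alpha)^{\widehat\alpha/(\widehat\alpha-2a)}+A(\widehat\alpha,t)^{\widehat\alpha/(\widehat\alpha-a)})$: when you differentiate this and check the supersolution condition, the term $A^{a/(\widehat\alpha-a)}[A']^-$ appears, and Young's inequality with conjugate exponents $(\widehat\alpha-a)/a$ and $(\widehat\alpha-a)/(\widehat\alpha-2a)$ is what produces the power $\widehat\alpha/(\widehat\alpha-2a)$ on $\beta$. You should carry this step out explicitly rather than asserting it.
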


For gradient and time derivative estimates, we denote
\begin{align*}
G_1(t)&=\int_U |\nabla \Psi(x,t)|^2 dx +\Big[\int_U |\Psi_t(x,t)|^{r_0} dx   \Big]^\frac{2-a}{r_0(1-a)} 
 +\Big[\int_U |\Psi_t(x,t)|^{r_0}dx  \Big]^\frac{1}{r_0},\\
G_2(t)&=\int_U|\nabla\Psi_t(x,t)|^2 dx+\int_U|\Psi_t(x,t)|^2dx,\quad G_3(t)=G_1(t)+G_2(t).
\end{align*}
with $r_0=\frac{n(2-a)}{(2-a)(n+1)-n}$.
For $t\ge 0$, recall from (4.20) in \cite{HIKS1} and from (3.25) in \cite{HI1} that
\begin{align}
\label{t0} 
\int_0^t \int_U H(|\nabla p|)dx d\tau&\le C \int_U \bar{p}^2(x,0) dx + C\int_0^tG_1(\tau)d\tau,\\
\label{intpt}
\int_U H(|\nabla p|)(x,t)dx+\int_0^t \int_U|\bar p_t(x,\tau)|^2 dx d\tau &\le \int_U \big[H(|\nabla p(x,0)|)+\bar p^2(x,0)\big] dx +C\int_0^t G_3(\tau)d\tau. 
\end{align}

Below are estimates for gradient and time derivative when time is large.

\begin{theorem}[cf. \cite{HK1}, Corollary 3.3]\label{UGcor}
Let $\alpha\ge \widehat 2$.

{\rm (i)} For $t\ge 1$,
\beq\label{all1}
\int_{t-1}^t \int_U H(|\nabla p(x,\tau)|)dx d\tau\le C\Big(1+ \int_U|\bar p_0(x)|^{\alpha} dx
+ [ Env A(\alpha,t)]^\frac{\alpha}{\alpha-a} + \int_{t-1}^tG_1(\tau)d\tau\Big ),
\eeq
\beq\label{all2}
 \int_{t-1/2}^t \int_U \bar{p}_t^2(x,\tau) dx d\tau\le C\Big(1+ \int_U|\bar{p}_0(x)|^{\alpha} dx
+ [ Env A(\alpha,t)]^\frac{\alpha}{\alpha-a} + \int_{t-1}^tG_3(\tau)d\tau\Big ).
\eeq

{\rm (ii)} If $A(\alpha)<\infty$ then
\beq\label{lim3}
  \limsup_{t\rightarrow\infty} \int_{t-1}^t \int_U H(|\nabla p(x,\tau)|)dx d\tau \le C\Big (1+A(\alpha)^\frac {\alpha} {\alpha-a} +\limsup_{t\to\infty} \int_{t-1}^tG_1(\tau)d\tau\Big),
\eeq
\beq\label{lim4}
  \limsup_{t\rightarrow\infty}  \int_{t-1/2}^t \int_U \bar{p}_t^2(x,\tau) dx d\tau \le C\Big (1+A(\alpha)^\frac {\alpha} {\alpha-a} +\limsup_{t\to\infty} \int_{t-1}^tG_3(\tau)d\tau\Big).
\eeq

{\rm (iii)} If $\beta(\alpha)<\infty$ then there is $T>0$ such that one has  for all $t\ge T$ that 
  \beq\label{large3}
 \int_{t-1}^t \int_U H(|\nabla p(x,\tau)|)dx d\tau \le C\Big (1+\beta(\alpha)^\frac {\alpha}{\alpha-2a} + A(\alpha,t-1)^\frac{\alpha}{\alpha-a} +\int_{t-1}^tG_1(\tau)d\tau\Big ),
 \eeq
 \beq\label{large4}
\int_{t-1/2}^t \int_U \bar{p}_t^2(x,\tau) dx d\tau \le C\Big (1+\beta(\alpha)^\frac {\alpha}{\alpha-2a} + A(\alpha,t-1)^\frac{\alpha}{\alpha-a} +\int_{t-1}^tG_3(\tau)d\tau\Big ).
 \eeq
\end{theorem}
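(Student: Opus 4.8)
The plan is to derive all six inequalities by one common device: localize the two global energy estimates \eqref{t0} and \eqref{intpt} in time, replacing their initial instant $0$ by a time $s$ close to $t$, and then control whatever data appears at time $s$ by the $L^\alpha$-bounds already proved in Theorem \ref{HIKS43}. The derivations of \eqref{t0} and \eqref{intpt} go through verbatim on an arbitrary interval $[s,t]$, with $p_0$, $\bar p(\cdot,0)$ and $\int_0^{(\cdot)}$ replaced by $p(\cdot,s)$, $\bar p(\cdot,s)$ and $\int_s^{(\cdot)}$. The key asymmetry — and the reason the gradient estimates \eqref{all1}, \eqref{lim3}, \eqref{large3} keep the lower limit $t-1$ while the time-derivative estimates \eqref{all2}, \eqref{lim4}, \eqref{large4} reach only down to $t-1/2$ — is that the right-hand side of \eqref{t0} contains only $\bar p^2(\cdot,s)$, which we \emph{do} control pointwise in $t$ via Theorem \ref{HIKS43}, whereas \eqref{intpt} also contains $H(|\nabla p(\cdot,s)|)$, which we do \emph{not}; for the latter the time $s$ must be chosen carefully.

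For \eqref{all1} I would apply the translated \eqref{t0} on $[t-1,t]$, giving $\int_{t-1}^t\int_U H(|\nabla p|)\le C\int_U\bar p^2(x,t-1)\,dx+C\int_{t-1}^tG_1(\tau)\,d\tau$, and then estimate $\int_U\bar p^2(x,t-1)\,dx$ by H\"older's inequality on the bounded domain $U$ followed by Theorem \ref{HIKS43}(i) applied with exponent $\alpha$ (the hypothesis $\alpha\ge\widehat 2$ forces $\widehat\alpha=\alpha$); since $\alpha\ge 2$ and $Env$ is nondecreasing, the exponents collapse to the ones displayed in \eqref{all1}. For \eqref{all2}: from the just-proved \eqref{all1}, $\int_{t-1}^{t-1/2}\int_U H(|\nabla p|)\,d\tau$ is bounded by the right-hand side of \eqref{all1}, so by a mean-value argument (using $t\mapsto\int_U H(|\nabla p(\cdot,t)|)\,dx\in C((0,\infty))$) there is $s\in[t-1,t-1/2]$ with $\int_U H(|\nabla p(x,s)|)\,dx\le 2\int_{t-1}^{t-1/2}\int_U H(|\nabla p|)\,d\tau$. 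One then applies the translated \eqref{intpt} on $[s,t]$, discards the nonnegative term $\int_U H(|\nabla p(\cdot,t)|)\,dx$ on its left, uses $[t-1/2,t]\subset[s,t]$, and bounds the three pieces of the right-hand side by \eqref{all1}, by Theorem \ref{HIKS43}(i) again, and by $\int_s^tG_3\le\int_{t-1}^tG_3$; absorbing $\int_{t-1}^tG_1$ into $\int_{t-1}^tG_3$ (as $G_1\le G_3$) produces \eqref{all2}.

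Parts (ii) and (iii) run the same two steps, but estimate the data at time $s$ by parts (ii) and (iii) of Theorem \ref{HIKS43} in place of part (i). For (ii) one bounds $\int_U\bar p^2(x,s)\,dx$ through $\limsup_{t\to\infty}\int_U|\bar p(x,s)|^\alpha\,dx\le C(1+A(\alpha)^{\alpha/(\alpha-a)})$, takes $\limsup_{t\to\infty}$ throughout, and uses that the intermediate times $s=s_t\in[t-1,t-1/2]$ tend to $\infty$, so $\limsup_t A(\alpha,s_t)\le A(\alpha)$ and $\limsup_t\int_{t-1}^tG_j(\tau)\,d\tau$ enters exactly as in \eqref{lim3}, \eqref{lim4}. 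For (iii), Theorem \ref{HIKS43}(iii) furnishes a threshold $T$ past which $\int_U|\bar p(x,\tau)|^\alpha\,dx\le C(1+\beta(\alpha)^{\alpha/(\alpha-2a)}+A(\alpha,\tau)^{\alpha/(\alpha-a)})$ (the exponents being legitimate since $\alpha\ge 2>2a$), and the same translation argument is rerun for all $t\ge T$, with the theorem's threshold being that of Theorem \ref{HIKS43}(iii) shifted by $1$.

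I expect the main obstacle to be precisely the time-derivative estimates \eqref{all2}, \eqref{lim4}, \eqref{large4}. A careless choice $s=0$ in \eqref{intpt} would drag the initial gradient term $\int_U H(|\nabla p_0|)\,dx$ and the non-localized $\int_0^tG_3$ into the bound — exactly the dependence on initial data the theorem is designed to eliminate (cf. the discussion in the Introduction of the shortcomings of \cite{LadyParaBook68}). The device that resolves this is to use \eqref{t0}, which is \emph{free of the initial gradient}, to locate a time $s\in[t-1,t-1/2]$ at which $\int_U H(|\nabla p(x,s)|)\,dx$ is already controlled, and only then invoke \eqref{intpt} starting from that $s$. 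Everything else is routine bookkeeping: matching powers, using $x^\theta\le 1+x$ for $\theta\in(0,1]$, and, in part (iii), keeping track of whether $A(\alpha,\cdot)$ is evaluated at $t-1$ or at the intermediate time $s$, which only affects the constant $C$ and the value of $T$.
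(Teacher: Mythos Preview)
The paper does not prove this theorem; it is quoted from \cite{HK1} in the ``Previous results'' section, so there is no in-paper argument to compare against. Your strategy---localize \eqref{t0} and \eqref{intpt} to $[s,t]$, control the data at $s$ by Theorem~\ref{HIKS43}, and for the $\bar p_t$ estimates use a mean-value selection of $s\in[t-1,t-1/2]$ to tame the stray term $\int_U H(|\nabla p(\cdot,s)|)\,dx$---is exactly the right one, and your proofs of \eqref{all1}, \eqref{all2}, \eqref{lim3}, \eqref{lim4}, \eqref{large3} are clean. The observation that $\alpha\ge\widehat 2$ forces $\widehat\alpha=\alpha$ is what makes the bookkeeping work.

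There is one genuine slip, in \eqref{large4}. After choosing $s\in[t-1,t-1/2]$ by the mean-value argument, you must bound $\int_U \bar p^2(x,s)\,dx$; invoking Theorem~\ref{HIKS43}(iii) at time $s$ produces $A(\alpha,s)^{\alpha/(\alpha-a)}$, not $A(\alpha,t-1)^{\alpha/(\alpha-a)}$. Your claim that the discrepancy ``only affects the constant $C$ and the value of $T$'' is not correct: the hypothesis $\beta(\alpha)<\infty$ controls only how fast $A(\alpha,\cdot)$ can \emph{decrease}, so $A(\alpha,s)$ for $s>t-1$ may be arbitrarily larger than $A(\alpha,t-1)$. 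What your argument actually yields is \eqref{large4} with $A(\alpha,t-1)$ replaced by $\sup_{\tau\in[t-1,t-1/2]}A(\alpha,\tau)$ (or, after an averaging, by $\|A(\alpha,\cdot)\|_{L^{\alpha/(\alpha-a)}(t-1,t)}$). This is harmless for the rest of the paper---every later use of \eqref{large4} (see the proof of Theorem~\ref{ptlimthm}(ii)) takes a supremum of $A(\alpha,\cdot)$ over a neighboring interval anyway---but you should state the bound you can prove rather than assert the one you cannot.
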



\section{$L^\infty$-estimates}
\label{PSec}

In this section we estimate the $L^\infty$-norm of the pressure. We will focus on estimates for $\bar p$ instead of $p$.
First, we give a local (in time) $L^\infty$-estimate which does not depend on the initial data's $L^\infty$-norm.

Let $p(x,t)$ be a solution to IBVP \eqref{p:eq} with given data $p_0(x)$ and $\psi(x,t)$.
Let $\bar p = p-\Psi$, then it satisfies
\beq\label{sys3}
\begin{cases}
\begin{displaystyle}
\frac{\partial \bar p}{\partial t}  
\end{displaystyle}
= \nabla  \cdot (K (|\nabla p|)\nabla p  )-\Psi_t &\text {in }  U\times (0,\infty),\\
\bar p(x,t)=0 &\text{on } \Gamma \times(0,\infty).
\end{cases}
\eeq

\begin{theorem}\label{Ntheo42}
Let $\alpha$ satisfy \eqref{alcond}. If $T_0\ge 0$, $T>0$ and $\theta\in(0,1)$ then 
\begin{multline}\label{Ne1}
\sup_{[T_0+\theta T,T_0+T]}\norm{\bar p(t)}_{L^\infty(U)}
\le C \Big\{ 
(\theta T)^{-\frac1{\delta_1}}\| \bar p\|_{L^\alpha(U\times (T_0,T_0+T)  )}
+ T^\frac{\alpha-2}{2\alpha(1+\delta_1)} \| \bar p\|_{L^\alpha(U\times (T_0,T_0+T)  )}^\frac{\delta_1}{1+\delta_1}\\
+ (\theta T)^{-\frac{1}{(\alpha-a)\delta_1}}\| \bar p\|_{L^\alpha(U\times (T_0,T_0+T)  )}^{\frac{\delta_3}{\delta_1}}
+ T^\frac{\alpha-2}{2(\alpha-a)(1+\delta_2)}\| \bar p\|_{L^\alpha(U\times (T_0,T_0+T)  )}^\frac{\delta_3}{1+\delta_2}\\
+ (T^\frac{\alpha-2}{2\alpha} \|\nabla \Psi\|_{L^{\alpha q_1}(U\times(T_0,T_0+T))} 
+T^\frac{\alpha-1}{\alpha} \| \Psi_t\|_{L^{\alpha q_1}(U\times(T_0,T_0+T))})^\frac{1}{1+\delta_3}  \| \bar p\|_{L^\alpha(U\times (T_0,T_0+T)  )}^\frac{\delta_2}{1+\delta_3}\\
+ (T^\frac{\alpha-2}{2\alpha} \|\nabla \Psi\|_{L^{\alpha q_1}(U\times(T_0,T_0+T))} +T^\frac{\alpha-1}{\alpha} \| \Psi_t\|_{L^{\alpha q_1}(U\times(T_0,T_0+T))})^\frac\alpha{(\alpha-a)(1+\delta_4)}    \| \bar p\|_{L^\alpha(U\times (T_0,T_0+T)  )}^\frac{\delta_4}{1+\delta_4}
\Big\}.
\end{multline}
where  $r_1=\alpha(1+(2-a)/n)-a,$  numbers $p_1$ and $q_1$ are conjugates of each other that satisfy 
\beqs
 1\le p_1<r_1/\alpha,\text{ or equivalently, } 1+\frac{1}{a(1/\alpha_*-1/\alpha)}<q_1=\frac{p_1}{p_1-1}\le\infty,
\eeqs
and $\delta_1$, $\delta_2$, $\delta_3$, $\delta_4$ are positive numbers defined by
\beq\label{deltadef}
\begin{aligned}
\delta_1&=1-\alpha/r_1,\quad &&\delta_2= \alpha/(\alpha-a) -\alpha/r_1, \\
\delta_3&= 1/p_1 - \alpha/r_1,\quad  &&\delta _4 = \alpha/((\alpha-a)p_1) - \alpha/r_1,
  \end {aligned}
\eeq
\end{theorem}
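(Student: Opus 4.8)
The plan is to run a De Giorgi iteration for the function $\bar p$ over the parabolic cylinder $Q = U\times(T_0,T_0+T)$, using cutoffs in both the level variable and the time variable so that the resulting estimate localizes in time (and hence avoids dependence on $\|\bar p_0\|_{L^\infty}$). First I would fix an increasing sequence of truncation levels $k_i = k(2-2^{-i})$ and a sequence of cut-off functions $\zeta_i(t)$ that equal $0$ for $t\le T_0 + (1-2^{-i})\theta T/2$-ish (adjusted so $\zeta_i \uparrow 1$ on $[T_0+\theta T, T_0+T]$) with $|\zeta_i'|\lesssim 2^i/(\theta T)$. Test equation \eqref{sys3} with $(\bar p - k_i)^+ \zeta_i^2$ and integrate over $U\times(T_0, t)$ for $t$ in the top time-slice. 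The parabolic term gives $\sup_t \|(\bar p-k_i)^+\zeta_i\|_{L^2}^2$ plus, via \eqref{Kestn} and \eqref{Hcompare}, a good term $\int\int K(|\nabla p|)|\nabla(\bar p-k_i)^+|^2\zeta_i^2 \gtrsim \int\int_{A_i}|\nabla(\bar p - k_i)^+|^{2-a}$ where $A_i = \{\bar p>k_i\}\cap\operatorname{supp}\zeta_i$; the time-derivative cutoff produces the term $\int\int (\bar p-k_i)^+{}^2\,\zeta_i|\zeta_i'| \lesssim (2^i/(\theta T))\int\int_{A_{i-1}} (\bar p-k_{i-1})^+{}^2$; and the $\Psi$-terms produce, after Hölder with the conjugate pair $(p_1,q_1)$, contributions involving $\|\nabla\Psi\|_{L^{\alpha q_1}}$ and $\|\Psi_t\|_{L^{\alpha q_1}}$ against powers of $|A_i|$.

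Next I would introduce the energy quantity $[[(\bar p-k_i)^+\zeta_i]]$ as in Lemma \ref{Sob4} with exponent $\alpha$, apply \eqref{nonzerobdn} (with $\delta = 0$ since the truncation vanishes on $\partial U$) to bound $\|(\bar p-k_i)^+\zeta_i\|_{L^{r_1}(Q)}$, and then convert everything into a recursion for $Y_i \eqdef \|(\bar p - k_i)^+\zeta_i\|_{L^\alpha(Q)}^\alpha$ (or a comparable power). The standard device is: on $A_i$ one has $\bar p - k_{i-1} \ge k_i - k_{i-1} = k2^{-i}$, so $|A_i| \le (k2^{-i})^{-r_1}\|(\bar p-k_{i-1})^+\zeta_{i-1}\|_{L^{r_1}}^{r_1}$, and conversely $Y_i$ is controlled by $\|(\bar p-k_i)^+\zeta_i\|_{L^{r_1}} |A_i|^{1/\alpha - 1/r_1}$ via Hölder. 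Feeding the energy estimate through Lemma \ref{Sob4} and these measure bounds yields $Y_{i+1}\le \sum_{k} \mathcal{C}_k B^i Y_i^{1+\mu_k}$ with $B$ a fixed power of $2$, with four (or five) terms whose exponents $1+\mu_k$ are exactly $1+\delta_1,\dots,1+\delta_4$ after bookkeeping, and whose coefficients $\mathcal C_k$ carry the precise powers of $\theta T$, $T$, $\|\nabla\Psi\|_{L^{\alpha q_1}}$, $\|\Psi_t\|_{L^{\alpha q_1}}$ appearing in \eqref{Ne1}. The definitions \eqref{deltadef} are engineered precisely so that the gaps $1/\alpha - 1/r_1$, $1/(p_1) - \ldots$ etc.\ produced by these Hölder steps match up.

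Then I would invoke Lemma \ref{multiseq}: the conclusion $Y_i\to 0$ holds provided $Y_0 = \|(\bar p - k)^+\zeta_0\|_{L^\alpha(Q)}^\alpha$ is smaller than $\min_k (m^{-1}\mathcal C_k^{-1}B^{-1/\mu})^{1/\mu_k}$. Since $Y_0 \le \|\bar p\|_{L^\alpha(Q)}^\alpha$, the smallness condition is guaranteed if we \emph{choose} $k$ to be a fixed multiple of the right-hand side of \eqref{Ne1} (each term of which is, by design, of the form $\mathcal C_k^{-1/(\alpha\mu_k)}$ times a power of $\|\bar p\|_{L^\alpha(Q)}$, so that raising it to the power $\alpha\mu_k$ and multiplying by $\mathcal C_k$ reproduces a fixed power of $\|\bar p\|_{L^\alpha(Q)}$ that can be absorbed). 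With that choice, $Y_i\to0$ forces $\bar p \le 2k$ a.e.\ on the top slice $U\times[T_0+\theta T, T_0+T]$; applying the same argument to $-\bar p$ gives the lower bound, and \eqref{Ne1} follows.

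The main obstacle is the bookkeeping in the second step: getting the five coefficient-exponent pairs to come out \emph{exactly} as in \eqref{deltadef} and \eqref{Ne1}. One must track carefully (a) how the power of $(\theta T)^{-1}$ from the time-cutoff term and the power of $T$ from the Hölder-in-time estimates of the $\Psi$-terms propagate through the Sobolev inequality, and (b) how the split exponent $1/\alpha - 1/r_1$ versus $1/(\alpha - a) - \alpha/r_1 \cdot(\ldots)$ arises from the two distinct "bad" terms (the $\Delta$-type term scaling like $|\nabla \bar p|^{2-a}$ contributes the $\alpha - a$ denominators, the $L^2$-sup term and the $\Psi$-terms contribute the others). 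A secondary subtlety is ensuring the constant $C$ does not depend on $k$ (it depends only on $\alpha$, $p_1$, $n$, $g$, $U$), which is what makes the final algebraic choice of $k$ legitimate; this is standard but must be stated. The rest — integration by parts, Cauchy–Schwarz with $\varep$, absorbing the $\tfrac12 I$ term — is routine.
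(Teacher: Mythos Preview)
Your overall architecture---De Giorgi iteration with level sets $k_i\nearrow M_0$, time cutoffs $\zeta_i$ with $|\zeta_i'|\lesssim 2^i/(\theta T)$, the parabolic Sobolev embedding of Lemma~\ref{Sob4}, and the multisequence Lemma~\ref{multiseq}---matches the paper's proof. But there is one concrete gap and one missing device.

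The gap is your test function. Testing \eqref{sys3} with $(\bar p-k_i)^+\zeta_i^2$ produces only the $L^2$-sup term $\sup_t\|(\bar p-k_i)^+\zeta_i\|_{L^2}^2$ and the gradient term $\int\!\!\int|\nabla(\bar p-k_i)^+|^{2-a}$; this is exactly the quantity $[[\,\cdot\,]]$ of Lemma~\ref{Sob4} \emph{for $\alpha=2$}, not for general $\alpha$. To invoke Lemma~\ref{Sob4} with the exponent $\alpha$ in the statement (and hence obtain $r_1=\alpha(1+(2-a)/n)-a$ and the $\delta_j$ of \eqref{deltadef}) you need $\sup_t\|\cdot\|_{L^\alpha}$ together with $\int\!\!\int|\cdot|^{\alpha-2}|\nabla\cdot|^{2-a}$. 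The paper therefore tests with $|\bar p^{(k)}|^{\alpha-1}\zeta$ instead; the parabolic term then yields $\tfrac1\alpha\tfrac{d}{dt}\int|\bar p^{(k)}|^\alpha\zeta$, and the elliptic term gives $\int K(|\nabla p|)|\nabla\bar p^{(k)}|^2|\bar p^{(k)}|^{\alpha-2}\zeta$, which after the lower bound on $K$ produces the correct weighted gradient integral. Without this change your scheme only proves the case $\alpha=2$.

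The missing device is the origin of the explicit $T$-powers in \eqref{Ne1}. After testing with $|\bar p^{(k)}|^{\alpha-1}\zeta$, the forcing terms are $\int(1+|\nabla\Psi|^2)|\bar p^{(k)}|^{\alpha-2}\zeta$ and $\int|\Psi_t|\,|\bar p^{(k)}|^{\alpha-1}\zeta$. The paper applies Young's inequality with a free parameter $\varepsilon$ to split off $\varepsilon\int|\bar p^{(k)}|^\alpha\zeta$, integrates in $t$ so this becomes $\varepsilon T\sup_t\int|\bar p^{(k)}|^\alpha\zeta$, and then chooses $\varepsilon=1/(2T)$ to absorb it into the left-hand side. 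This is precisely what produces the factors $T^{\alpha/2-1}$ in front of $\int\!\!\int|\nabla\Psi|^\alpha\chi_k$ and $T^{\alpha-1}$ in front of $\int\!\!\int|\Psi_t|^\alpha\chi_k$, which after H\"older with $(p_1,q_1)$ become the $T^{(\alpha-2)/(2\alpha)}$ and $T^{(\alpha-1)/\alpha}$ visible in \eqref{Ne1}. Your phrase ``Hölder-in-time estimates of the $\Psi$-terms'' does not capture this; it is a Young-then-absorb step, not a H\"older step. Once these two corrections are made, the remaining bookkeeping (the recursion $Y_{i+1}\le CB^i\sum_j D_j Y_i^{1+\delta_j}$ with $Y_i=\|\bar p^{(k_i)}\|_{L^\alpha(A_i)}$, and solving the smallness conditions $D_j\le CY_0^{-\delta_j}$ for $M_0$) proceeds exactly as you outlined.
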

\begin{proof} Without loss of generality, we assume $T_0=0$. 
Let $k\ge 0$, define $\bar p^{(k)} =\max\{\bar p-k,0\}$, and denote by $\chi _k$ the characteristic function on the set ${\rm supp}\ \bar p^{(k)}$ - the support set of $\bar p^{(k)}$.   

Let $\zeta=\zeta(t)$ be a smooth function on $\R$ satisfying $0\le \zeta\le 1$ and $\zeta_t\ge 0$ on $\R$, and $\zeta=0$ on $(-\infty,0]$. 

Multiplying the partial differential equation (PDE) in \eqref{p:eq} by $|\bar p^{(k)}|^{\alpha-1} \zeta$, integrating over $U$ and using integration by parts, we have
\begin{align*}
& \frac1\alpha  \int_U \frac{\partial |\bar p^{(k)}|^\alpha}{\partial t} \zeta dx +(\alpha-1)\int_U K(|\nabla p|) (\nabla p\cdot \nabla\bar  p^{(k)}) |\bar p^{(k)}|^{\alpha-2} \zeta dx
= - \int_U \Psi_t |\bar p^{(k)}|^{\alpha-1} \zeta dx.
\end{align*}

For the second integral on the left-hand side,
\begin{align*}
& K(|\nabla p|) (\nabla p\cdot \nabla\bar  p^{(k)}) |\bar p^{(k)}|^{\alpha-2} 
= K(|\nabla p|) (\nabla \bar p\cdot \nabla\bar  p^{(k)}) |\bar p^{(k)}|^{\alpha-2} +K(|\nabla p|) (\nabla \Psi\cdot \nabla\bar  p^{(k)}) |\bar p^{(k)}|^{\alpha-2}.
\end{align*}

Note that $\nabla \bar p\cdot \nabla\bar  p^{(k)}=|\nabla\bar  p^{(k)}|^2$ and $|\nabla \Psi\cdot \nabla\bar  p^{(k)}|\le \frac12|\nabla \Psi |^2+\frac12 |\nabla\bar  p^{(k)}|^2$.
Then
\begin{align*}
& K(|\nabla p|) (\nabla p\cdot \nabla\bar  p^{(k)}) |p^{(k)}|^{\alpha-2} 
\ge K(|\nabla p|)|\bar p^{(k)}|^{\alpha-2}( |\nabla\bar  p^{(k)}|^2  - \frac12|\nabla \Psi |^2-\frac12 |\nabla\bar  p^{(k)}|^2 )\\
&=\frac12K(|\nabla p|) |\nabla\bar  p^{(k)}|^2 |\bar p^{(k)}|^{\alpha-2} - \frac12 K(|\nabla p|) |\nabla \Psi|^2 |\bar p^{(k)}|^{\alpha-2}.
\end{align*}

Therefore,
\beq\label{add2}
\begin{aligned}
& \frac1\alpha  \int_U \frac{\partial |\bar p^{(k)}|^\alpha}{\partial t} \zeta dx +\frac{\alpha-1}2 \int_U K(|\nabla p|) |\nabla\bar  p^{(k)}|^2 |\bar p^{(k)}|^{\alpha-2} \zeta dx\\
&\le  \frac{\alpha-1}2 \int_U K(|\nabla p|) |\nabla \Psi|^2 |\bar 
p^{(k)}|^{\alpha-2}\zeta dx + \int_U |\Psi_t| |\bar p^{(k)}|^{\alpha-1} \zeta 
dx\\
&\le  C \int_U  |\nabla \Psi|^2 |\bar p^{(k)}|^{\alpha-2}\zeta dx + \int_U 
|\Psi_t| |\bar p^{(k)}|^{\alpha-1} \zeta dx.
\end{aligned}
\eeq
The last inequality uses the fact that  $K(\xi)$ is bounded.

For the second integral on the left-hand side again, we have from \eqref{K-est-3}  and triangle inequality that
\begin{align*}
K(|\nabla p|) |\nabla\bar  p^{(k)}|^2
&\ge \frac{d_1  |\nabla\bar  p^{(k)}|^2}{(1+|\nabla p|)^a}
\ge \frac{d_1  |\nabla\bar  p^{(k)}|^2}{(1+|\nabla \bar p|+|\nabla \Psi|)^a}
=  \frac{d_1  |\nabla\bar  p^{(k)}|^2}{(1+|\nabla \bar p^{(k)}|+|\nabla \Psi|)^a}\\
&\ge \frac{(d_1/2) ( |\nabla\bar  p^{(k)}|+|\nabla \Psi|)^2-d_1|\nabla \Psi|^2}{(1+|\nabla \bar p^{(k)}|+|\nabla \Psi|)^a}\\
&\ge \frac{d_1}2\frac{( |\nabla\bar  p^{(k)}|+|\nabla \Psi|)^2}{(1+|\nabla \bar p^{(k)}|+|\nabla \Psi|)^a}-d_1|\nabla \Psi|^{2-a}.
\end{align*}

It is elementary to show for $\xi\ge0$ that
\beqs
\frac{\xi^2}{(1+\xi)^a}\ge \frac1{2^a}(\xi^{2-a}-1).
\eeqs

Thus,
\begin{align*}
K(|\nabla p|) |\nabla\bar  p^{(k)}|^2
&\ge \frac{d_1}{2^{a+1}}((|\nabla\bar  p^{(k)}|+|\nabla \Psi|)^{2-a}-1) - d_1|\nabla \Psi|^{2-a}\\
&\ge \frac{d_1}{2^{a+1}}|\nabla\bar  p^{(k)}|^{2-a}-\frac{d_1}{2^{a+1}} - d_1|\nabla \Psi|^{2-a}.
\end{align*}

Therefore, it follows this, \eqref{add2},  and Young's inequality that
\begin{align*}
& \int_U \frac{\partial |\bar p^{(k)}|^\alpha}{\partial t} \zeta dx +\int_U |\nabla\bar  p^{(k)}|^{2-a} |\bar p^{(k)}|^{\alpha-2} \zeta dx\\
&\le C\int_U (1+|\nabla \Psi|^{2-a} +|\nabla \Psi|^2) |\bar 
p^{(k)}|^{\alpha-2}\zeta dx + C \int_U |\Psi_t| |\bar p^{(k)}|^{\alpha-1} \zeta 
dx\\
&\le C\int_U (1+|\nabla \Psi|^2) |\bar p^{(k)}|^{\alpha-2}\zeta dx + C\int_U 
|\Psi_t| |\bar p^{(k)}|^{\alpha-1} \zeta dx.
\end{align*}

Applying the product rule to the first term, we have
\begin{align*}
&\ddt \int_U |\bar p^{(k)}|^\alpha \zeta dx+ \int_U |\nabla \bar p^{(k)}|^{2-a} |\bar p^{(k)}|^{\alpha-2} \zeta dx \\
&\le C\int_U |\bar p^{(k)}|^\alpha \zeta_t dx + C\int_U ((1  +|\nabla \Psi|^2) |\bar p^{(k)}|^{\alpha-2}\zeta +  |\Psi_t| |\bar p^{(k)}|^{\alpha-1} \zeta)dx.
\end{align*}
Applying Young's inequality to the integrand of the last integral yields
\begin{align*}
&\ddt \int_U |\bar p^{(k)}|^\alpha \zeta dx+ \int_U |\nabla \bar p^{(k)}|^{2-a} |\bar p^{(k)}|^{\alpha-2} \zeta dx 
\le C\int_U |\bar p^{(k)}|^\alpha \zeta_t dx \\
&+ \varep\int_U |\bar p^{(k)}|^\alpha \zeta  dx + C\varep^{1-\alpha/2}\int_U (1+|\nabla \Psi|^\alpha)\chi_k\zeta dx dt + C\varep^{1-\alpha}  \int_U |\Psi_t|^\alpha \chi_k\zeta dxdt. 
\end{align*}
Integrating form $0$ to $t$ for $t\in[0,T]$ and taking the supremum give
\begin{align*}
 &\sup_{[0,T]} \int_U |\bar p^{(k)}|^\alpha \zeta dx+\int_0^T\int_U |\nabla \bar p^{(k)}|^{2-a} |\bar p^{(k)}|^{\alpha-2} \zeta dx dt \le C \int_0^T\int_U |\bar p^{(k)}|^\alpha \zeta_t dx dt\\
&+ \varep T \sup_{[0,T]} \int_U |\bar p^{(k)}|^\alpha \zeta  dx  +C\varep^{1-\alpha/2}\int_0^T \int_U (1+|\nabla \Psi|^\alpha)\chi_k\zeta dx dt
 + C\varep^{1-\alpha}  \int_0^T\int_U |\Psi_t|^\alpha \chi_k\zeta dxdt.
\end{align*}
Selecting $\varep= 1/(2 T)$, we obtain
\begin{multline*}
  \sup_{[0,T]} \int_U |\bar p^{(k)}|^\alpha \zeta dx   + \int_0^T\int_U |\nabla \bar p^{(k)}|^{2-a} |\bar p^{(k)}|^{\alpha-2} \zeta dx dt \le  C\int_0^T\int_U |\bar p^{(k)}|^\alpha \zeta_t dx dt\\
 +C T^{\alpha/2-1}\int_0^T \int_U \chi_k\zeta dx dt
 +C T^{\alpha/2-1}\int_0^T \int_U |\nabla \Psi|^\alpha \chi_k\zeta dx dt
+ C T^{\alpha-1} \int_0^T\int_U |\Psi_t|^\alpha \chi_k\zeta dx dt.
\end{multline*}
Applying H\"older's inequality to the last two integrals gives
\beq\label{prena}
\begin{aligned}
&  \sup_{[0,T]} \int_U |\bar p^{(k)}|^\alpha \zeta dx   + \int_0^T\int_U |\nabla \bar p^{(k)}|^{2-a} |\bar p^{(k)}|^{\alpha-2} \zeta dx dt \le  C\int_0^T\int_U |\bar p^{(k)}|^\alpha \zeta_t dx dt\\
 & +C T^{\alpha/2-1}\int_0^T \int_U \chi_k\zeta dx dt\\
&+ C( T^{\alpha/2-1}\| |\nabla \Psi|^{\alpha}\|_{L^{q_1}(U\times(0,T))} +T^{\alpha-1}\| |\Psi_t|^{\alpha}\|_{L^{q_1}(U\times(0,T))}) \Big(\int_0^T\int_U  \chi_k\zeta dx dt\Big)^{1/p_1}.
\end{aligned}
\eeq

Let $M_0 >0$ be fixed which will be determined later.
For $i\ge 0$, define 
\beq\label{indexSeq2}
k_i= M_0(1-2^{-i}),\quad
t_i =\theta T( 1- 2^{-i}).
\eeq 
Then 
$t_0=0<t_1<\ldots<\theta T$ and
$\lim_{i\to\infty}t_i=\theta T$; $k_0=0<k_1<k_2<\ldots$ and $\lim_{i\to\infty}k_i=M_0$.
  
For $i,j\ge  0$, we denote 
\beq\label{setDef2} 
\mathcal Q_i =U\times (t_i,T)\quad\text { and } \quad A_{i,j} =\{(x,t)\in \mathcal Q_j:  p(x,t)>k_i\}, \quad  A_i=A_{i,i}.
\eeq

For each $i$, we use a cut-off function $\zeta_i(t)$ with $\zeta_i\equiv 1$ in $[t_{i+1},T]$ and 
$\zeta_i\equiv 0$ on $[0,t_{i}]$, and 
\beq\label{cutoffBound2}
|(\zeta_i)_t|\le \frac C{t_{i+1}-t_i} = \frac {C2^{i+1}}{\theta T}.\eeq
for some $C>0$. 
Applying \eqref{prena} with $k=k_{i+1}$ and $\zeta=\zeta_i$ gives
\beq\label{pren1}
\begin{aligned}
&  \sup_{[0,T]} \int_U |\bar p^{(k_{i+1})}|^\alpha \zeta_i dx   + \int_0^T\int_U |\nabla \bar p^{(k_{i+1})}|^{2-a}|\bar p^{(k_{i+1})}|^{\alpha-2} \zeta_i dx dt\\
&\le  \int_0^T\int_U |\bar p^{(k_{i+1})}|^\alpha \zeta_{it} dxdt
 + C T^{\alpha/2-1}\int_0^T \int_U \chi_{k_{i+1}}\zeta_i dx dt\\
&\quad + C (T^{\alpha/2-1} \|\nabla \Psi\|^{\alpha}_{L^{\alpha q_1}(U\times(0,T))}+ T^{\alpha-1} \| \Psi_t\|^{\alpha}_{L^{\alpha q_1}(U\times(0,T))}) \Big(\int_0^T\int_U  \chi_{k_{i+1}}\zeta_i dx dt\Big)^{1/p_1}.
\end{aligned}
\eeq
Define 
\beqs
F_i\eqdef   \sup_{[t_{i+1},T]} \int_U |\bar p^{(k_{i+1})}|^\alpha  dx   + \int_{t_i}^T\int_U |\nabla \bar p^{(k_{i+1})}|^{2-a}|\bar p^{(k_{i+1})}|^{\alpha-2}  dx dt.
\eeqs
Let
$$\mathcal E_1 = T^{\alpha/2-1}\quad \text{and}\quad \mathcal E_2= T^{\alpha/2-1} \|\nabla \Psi\|^{\alpha}_{L^{\alpha q_1}(U\times(0,T))}+T^{\alpha-1}\| \Psi_t\|^{\alpha}_{L^{\alpha q_1}(U\times(0,T))}.$$
Then \eqref{pren1} yields   
\beq\label{pren2}
\begin{aligned}
F_i &\le  \int_{t_i}^{t_{i+1}} \int_U |\bar p^{(k_{i+1})}|^\alpha \zeta_{it} dxdt
 +C\mathcal E_1\int_{t_{i+1}}^T \int_U \chi_{k_{i+1}} dx dt + C \mathcal E_2\Big(\int_{t_{i+1}}^T\int_U  \chi_{k_{i+1}} dx dt\Big)^{1/p_1}.\\
 &\le C2^i(\theta T)^{-1}  \|\bar p^{(k_{i+1})}\|_{L^\alpha(A_{i+1,i})}^\alpha
 +C\mathcal  E_1 |A_{i+1,i}|+C\mathcal  E_2 |A_{i+1,i}|^{1/p_1}.
\end{aligned}
\eeq
Since
$
 \|\bar  p^{(k_i)}\|_{L^\alpha(A_i)}\ge  \|\bar  p^{(k_i)}\|_{L^\alpha(A_{i+1,i})}\ge (k_{i+1}-k_i) |A_{i+1,i}|^{1/\alpha}, 
$
thus
\beq\label{a2}
  |A_{i+1,i}| \le (k_{i+1}-k_i)^{-\alpha} \|\bar  p^{(k_i)}\|_{L^\alpha(A_i)}^{\alpha} \le C 2^{\alpha i} M_0^{-\alpha}\|\bar  p^{(k_i)}\|_{L^\alpha(A_i)}^{\alpha}.
\eeq
This and \eqref{pren2} imply
\begin{multline}\label{Fn}
F_i \le  C2^i(\theta T)^{-1}  \|\bar p^{(k_{i+1})}\|_{L^\alpha(A_{i+1,i})}^\alpha
 +C\mathcal E_1  2^{\alpha i} M_0^{-\alpha}\| \bar p^{(k_i)}\|_{L^\alpha(A_i)}^{\alpha}+C\mathcal E_2  2^{\alpha i/p_1} M_0^{-\alpha/p_1}\| \bar p^{(k_i)}\|_{L^\alpha(A_i)}^{\alpha/p_1}\\
 \le   C 2^{\alpha i}\Big\{ \Big((\theta T)^{-1}+\mathcal E_1 M_0^{-\alpha}\Big) \|\bar  p^{(k_i)}\|_{L^\alpha(A_i)}^{\alpha}+ \mathcal E_2 M_0^{-\alpha/p_1}\| \bar p^{(k_i)}\|_{L^\alpha(A_i)}^{\alpha/p_1}\Big\}.
\end{multline}

Note that $r_1$ is the exponent defined in \eqref{expndef}. By Lemma~\ref{Sob4},
\beq\label{a1}
 \| \bar p^{(k_{i+1})}\|_{L^{r_1}(A_{i+1,i+1})}
\le C (F_i^{1/\alpha}+F_i^{1/(\alpha-a)}).
\eeq
H\"older's inequality gives
\beq\label{h1}
\|\bar  p^{(k_{i+1})}\|_{L^\alpha(A_{i+1,i+1})}
 \le \|\bar  p^{(k_{i+1})}\|_{L^{r_1}(A_{i+1,i+1})} |A_{i+1,i+1}|^{1/\alpha-1/r_1}
 \le \|\bar  p^{(k_{i+1})}\|_{L^{r_1}(A_{i+1,i+1})} |A_{i+1,i}|^{1/\alpha-1/r_1}.
\eeq
Combining \eqref{h1} with  \eqref{a2}, \eqref{Fn} and \eqref{a1} yields
\begin{align*}
 &\|\bar  p^{(k_{i+1})}\|_{L^\alpha(A_{i+1,i+1})}
\le C  B^i \Big \{ \Big((\theta T)^{-1}+\mathcal E_1 M_0^{-\alpha}\Big)^{1/\alpha} \|\bar  p^{(k_i)}\|_{L^\alpha(A_i)} + \mathcal E_2^{1/\alpha} M_0^{-1/p_1}\| \bar p^{(k_i)}\|_{L^\alpha(A_i)}^{1/p_1}\\
&\quad +\Big((\theta T)^{-1}+\mathcal E_1 M_0^{-\alpha}\Big)^{1/(\alpha-a)} \|\bar  p^{(k_i)}\|_{L^\alpha(A_i)}^{\alpha/(\alpha-a)}+ \mathcal E_2^{1/(\alpha-a)} M_0^{-\alpha/((\alpha-a)p_1)}\| \bar p^{(k_i)}\|_{L^\alpha(A_i)}^{\alpha/((\alpha-a)p_1)}
\Big\} \\
&\quad \cdot M_0^{-1+\alpha/r_1} \| p^{(k_i)}\|_{L^2(A_i)}^{1-\alpha/r_1},
\end{align*}
where $B=2^\alpha$.
Let $ Y_i=\|\bar  p^{(k_i)}\|_{L^\alpha(A_i)}.$ Then 
\beqs
Y_{i+1}\le  C B^i \Big (D_1 Y_i^{1+\delta_1}+D_2Y_i^{1+\delta_2} +D_3 Y_i^{1+\delta_3}+D_4 Y_i^{1+\delta_4}\Big ),
\eeqs
where 
\begin{align*}
D_1&= \big((\theta T)^{-1/\alpha}+\mathcal E_1^{1/\alpha} M_0^{-1}\big)\cdot M_0^{-1+\alpha/r_1}
    =(\theta T)^{-1}M_0^{-\delta_1} + T^{1/2-1/\alpha} M_0^{-1-\delta_1},\\
D_2&= \mathcal E_2^{1/\alpha} M_0^{-1/p_1}\cdot M_0^{-1+\alpha/r_1}
    = \mathcal E_2^{1/\alpha} M_0^{-1/p_1-\delta_1},\\
D_3&=\big((\theta T)^{-1/(\alpha-a)}+\mathcal E_1^{1/(\alpha-a)} M_0^{-\alpha/(\alpha-a)}\big)\cdot M_0^{-1+\alpha/r_1}
    =(\theta T)^{-\frac{1}{\alpha-a}}M_0^{-\delta_1} + T^\frac{\alpha-2}{2(\alpha-a)} M_0^{-\frac{\alpha}{\alpha-a}-\delta_1},\\
D_4&=\mathcal E_2^{1/(\alpha-a)} M_0^{-\alpha/((\alpha-a)p_1)}\cdot M_0^{-1+\alpha/r_1}
=\mathcal E_2^{1/(\alpha-a)} M_0^{-\frac\alpha{(\alpha-a)p_1}-\delta_1}.
\end{align*}

Take $M_0$ sufficiently large such that 
\beq\label{Y1}
Y_0\le C\min\{ D_1^{-1/\delta_1},D_2^{-1/\delta_2},D_3^{-1/\delta_3},D_4^{-1/\delta_4}\},
\eeq
or equivalently,
\beq\label{Y2}
D_j\le CY_0^{-\delta_j},\quad j=1,2,3,4. 
\eeq

Then by Lemma \ref{multiseq}, $\lim_{i\to\infty} Y_i=0$, consequently,
$\int_{\theta T}^T\int_U |\bar p^{(M_0)}|^\alpha dxdt=0,$
that is
\beqs
\bar p(x,t)\le M_0 \quad  \text{in } U\times [\theta T,T]. 
\eeqs
Repeating the argument above for $-p$,$-\psi$ instead of $p$, $\psi$, we obtain 
\beq\label{abspM} |\bar p(x,t)|\le M_0 \quad \text{in }U\times [\theta T,T].\eeq 

It remains to determine $M_0$. Since $Y_0\le \| \bar p\|_{L^\alpha(U\times (0,T)  )}$, we have sufficient conditions for \eqref{Y2}:
\begin{align*}
&(\theta T)^{-1}M_0^{-\delta_1}\le C \| \bar p\|_{L^\alpha(U\times (0,T)  )}^{-\delta_1},  
\quad T^{1/2-1/\alpha} M_0^{-1-\delta_1}\le C\| \bar p\|_{L^\alpha(U\times (0,T)  )}^{-\delta_1},\\
&(\theta T)^{-\frac{1}{\alpha-a}}M_0^{-\delta_1}\le C\| \bar p\|_{L^\alpha(U\times (0,T)  )}^{-\delta_3},\quad
T^\frac{\alpha-2}{2(\alpha-a)} M_0^{-\frac{\alpha}{\alpha-a}-\delta_1}\le C\| \bar p\|_{L^\alpha(U\times (0,T)  )}^{-\delta_3},\\
&\mathcal E_2^{1/\alpha} M_0^{-1/p_1-\delta_1}\le C\| \bar p\|_{L^\alpha(U\times (0,T)  )}^{-\delta_2},\quad 
\mathcal E_2^{1/(\alpha-a)} M_0^{-\frac\alpha{(\alpha-a)p_1}-\delta_1}
\le C\| \bar p\|_{L^\alpha(U\times (0,T)  )}^{-\delta_4}.
\end{align*}
Solving these inequalities gives
\begin{multline}\label{Mineq}
M_0
\ge C \max \Big\{ 
(\theta T)^{-1/\delta_1}\| \bar p\|_{L^\alpha(U\times (0,T)  )},\ 
T^\frac{\alpha-2}{2(1+\delta_1)\alpha} \| \bar p\|_{L^\alpha(U\times (0,T)  )}^\frac{\delta_1}{1+\delta_1},\
(\theta T)^{-\frac{1}{(\alpha-a)\delta_1}}\| \bar p\|_{L^\alpha(U\times (0,T)  )}^\frac{\delta_3}{\delta_1},\\
T^\frac{\alpha-2}{2(\alpha+\delta_1(\alpha-a))}\| \bar p\|_{L^\alpha(U\times (0,T)  )}^\frac{\delta_3(\alpha-a)}{\alpha+\delta_1(\alpha-a)},\
\mathcal E_2^\frac{1}{\alpha(1/p_1+\delta_1)} \| \bar p\|_{L^\alpha(U\times (0,T)  )}^\frac{\delta_2}{1/p_1+\delta_1},\
\mathcal E_2^\frac{p_1}{\alpha+\delta_1(\alpha-a)p_1}  \| \bar p\|_{L^\alpha(U\times (0,T)  )}^\frac{\delta_4(\alpha-a)p_1}{\alpha+\delta_1(\alpha-a)p_1}
\Big\}
\end{multline}
with an appropriate positive constant $C$. Choosing $M_0$ to be the right-hand side of \eqref{Mineq} with the sum replacing the maximum, we obtain from \eqref{abspM} that
\begin{multline}\label{Ne20}
\sup_{[\theta T,T]}\norm{\bar p(t)}_{L^\infty(U)}
\le C \Big\{ 
(\theta T)^{-1/\delta_1}\| \bar p\|_{L^\alpha(U\times (0,T)  )}
+ T^\frac{\alpha-2}{2(1+\delta_1)\alpha} \| \bar p\|_{L^\alpha(U\times (0,0+T)  )}^\frac{\delta_1}{1+\delta_1}\\
+ (\theta T)^{-\frac{1}{(\alpha-a)\delta_1}}\| \bar p\|_{L^\alpha(U\times (0,T)  )}^\frac{\delta_3}{\delta_1}
+ T^\frac{\alpha-2}{2(\alpha+\delta_1(\alpha-a))}\| \bar p\|_{L^\alpha(U\times (0,T)  )}^\frac{\delta_3(\alpha-a)}{\alpha+\delta_1(\alpha-a)}\\
(\mathcal E_2^{1/\alpha})^\frac{1}{1/p_1+\delta_1} \| \bar p\|_{L^\alpha(U\times (0,T)  )}^\frac{\delta_2}{1/p_1+\delta_1}+
(\mathcal E_2^{1/\alpha})^\frac{\alpha }{\alpha/p_1+\delta_1(\alpha-a)}  \| \bar p\|_{L^\alpha(U\times (0,T)  )}^\frac{\delta_4(\alpha-a)p_1}{\alpha+\delta_1(\alpha-a)p_1}
\Big\}.
\end{multline}
Rewriting the powers in \eqref{Ne20}, noticing that
\beqs
\frac1{p_1}+\delta_1=1+\delta_3,\quad
\frac{\alpha}{\alpha-a}+\delta_1=1+\delta_2,\quad
\frac{\alpha}{(\alpha-a)p_1}+\delta_1=1+\delta_4,
\eeqs
and also
\beqs
\mathcal E_2^{1/\alpha}\le C (T^\frac{\alpha-2}{2\alpha} \|\nabla \Psi\|_{L^{\alpha q_1}(U\times(T_0,T_0+T))} +T^\frac{\alpha-1}{\alpha} \| \Psi_t\|_{L^{\alpha q_1}(U\times(T_0,T_0+T))}),
\eeqs
we obtain \eqref{Ne1}. The proof is complete.
\end{proof}

\begin{remark}
In \eqref{Ne1}, the norm $\|\bar p\|_{L^\infty(U\times[T_0+\theta T,T_0+T])}$ is estimated by a sum of homogeneous terms in $\|\bar p\|_{L^\alpha(U\times(T_0,T_0+T))}$. Therefore it is appropriate for both small and large $\|\bar p\|_{L^\alpha(U\times(T_0,T_0+T))}$. This improves our previous interior versions in Theorem 4.1 of \cite{HK1} and Proposition 3.2 of \cite{HKP1}, where the bounds  contain an additive constant, which make them more suitable for large value $\|\bar p\|_{L^\alpha(U\times (T_0,T_0+T)}$. For doubly nonlinear parabolic equations of p-Laplacian type, the interior estimate of this kind was obtained in \cite{Surnachev2012} using Moser's iteration.
\end{remark}

To simplify our future estimates, we use the following  weaker version of Theorem \ref{Ntheo42}.

\begin{corollary}\label{Nsim}
Let $\alpha$, $T_0$, $T$, $\theta$, $r_1$, $p_1$, $q_1$, $\delta_1$, $\delta_2$, $\delta_3$, $\delta_4$ be as in Theorem \ref{Ntheo42}.
Then
\begin{multline}\label{esim}
\sup_{[T_0+\theta T,T_0+T]}\norm{\bar p(t)}_{L^\infty(U)}
\le C \Big(1+ (\theta T)^{-1/\delta_1}+T^{z_1}\Big)\\
\cdot \Big( 1+ \| \nabla \Psi\|_{L^{\alpha q_1}(U\times(T_0,T_0+T))}+ \| \Psi_t\|_{L^{\alpha q_1}(U\times(T_0,T_0+T))}\Big)^{z_2}\\
\cdot \Big(1+   \| \bar p\|_{L^\alpha(U\times (T_0,T_0+T)  )}\Big)^{z_3},
\end{multline}
where
\beq\label{z123}
z_1=\frac{\alpha-1}{(\alpha-a)(1+\delta_4)},\
z_2=\frac\alpha{(\alpha-a)(1+\delta_4)},\
z_3=\max\Big\{1,\frac{\delta_2}{1+\delta_3}\Big\}.
\eeq
\end{corollary}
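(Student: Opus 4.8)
The plan is to derive \eqref{esim} from \eqref{Ne1} purely by bounding each of the six terms on the right-hand side of \eqref{Ne1} by the single product appearing in \eqref{esim}. The strategy rests on two elementary observations. First, every exponent on $\|\bar p\|_{L^\alpha(U\times(T_0,T_0+T))}$ that occurs in \eqref{Ne1} lies between $0$ and $z_3=\max\{1,\delta_2/(1+\delta_3)\}$; since $1+x\ge 1$, a factor $(1+\|\bar p\|_{L^\alpha})^{\gamma}$ with $0\le\gamma\le z_3$ is dominated by $(1+\|\bar p\|_{L^\alpha})^{z_3}$. Second, every exponent on the $\Psi$-data factor $\mathcal E\eqdef 1+\|\nabla\Psi\|_{L^{\alpha q_1}(U\times(T_0,T_0+T))}+\|\Psi_t\|_{L^{\alpha q_1}(U\times(T_0,T_0+T))}$ that occurs is between $0$ and $z_2=\alpha/((\alpha-a)(1+\delta_4))$, so the same monotonicity argument replaces each by $\mathcal E^{z_2}$. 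The ``$+$'' structure linking $\nabla\Psi$ and $\Psi_t$ inside parentheses in \eqref{Ne1} is absorbed using $(x+y)^\gamma\le C(x^\gamma+y^\gamma)\le C\,\mathcal E^\gamma$ for the relevant $\gamma$.

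Next I would handle the time-power coefficients. The coefficients in \eqref{Ne1} are of three shapes: $(\theta T)^{-1/\delta_1}$ and $(\theta T)^{-1/((\alpha-a)\delta_1)}$; pure positive powers $T^{(\alpha-2)/(2\alpha(1+\delta_1))}$, $T^{(\alpha-2)/(2(\alpha-a)(1+\delta_2))}$; and the mixed powers $T^{(\alpha-2)/(2\alpha)}$, $T^{(\alpha-1)/\alpha}$ that sit inside the $\Psi$-data parentheses and therefore get raised to $1/(1+\delta_3)$ or $\alpha/((\alpha-a)(1+\delta_4))$. All of the purely positive exponents on $T$ are bounded above by $z_1=(\alpha-1)/((\alpha-a)(1+\delta_4))$ — this is the key numerical comparison to check, using $\alpha\ge 2$, $0<a<1$ and the definitions \eqref{deltadef}; once verified, $T^{(\text{anything in that list})}\le 1+T^{z_1}$. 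The negative powers of $\theta T$ are each bounded by $1+(\theta T)^{-1/\delta_1}$ after noting $1/((\alpha-a)\delta_1)\le 1/\delta_1$ is false in general, so one instead observes $(\theta T)^{-1/((\alpha-a)\delta_1)}\le 1+(\theta T)^{-1/\delta_1}$ when $\theta T$ is split into the cases $\theta T\le 1$ and $\theta T>1$ (in the first case both are $\ge 1$ and the larger negative exponent wins; in the second case both are $\le 1$). Collecting, each coefficient is $\le C(1+(\theta T)^{-1/\delta_1}+T^{z_1})$.

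The main obstacle — really the only non-bookkeeping point — is verifying the chain of inequalities among the exponents: that $z_1$ genuinely dominates all the positive $T$-exponents, that $z_2$ dominates all the data-exponents $1/(1+\delta_3)$, $\alpha/((\alpha-a)(1+\delta_4))$, etc., and that $z_3$ dominates all the exponents on $\|\bar p\|_{L^\alpha}$, namely $1$, $\delta_1/(1+\delta_1)$, $\delta_3/\delta_1$, $\delta_3/(1+\delta_2)$, $\delta_2/(1+\delta_3)$, $\delta_4/(1+\delta_4)$. Here one must use the sign information on the $\delta_j$: from \eqref{deltadef} and the constraint $1\le p_1<r_1/\alpha$ one has $\delta_1,\delta_3>0$ with $\delta_3\le\delta_1$ (since $1/p_1\le 1$), and $\delta_2>\delta_4>0$ with $\delta_4\le\delta_2$; hence $\delta_1/(1+\delta_1)<1\le z_3$, $\delta_4/(1+\delta_4)<1\le z_3$, $\delta_3/\delta_1\le 1\le z_3$, $\delta_3/(1+\delta_2)\le\delta_2/(1+\delta_2)<1\le z_3$, and $\delta_2/(1+\delta_3)\le z_3$ by definition. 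The analogous inequalities for $z_2$ follow from $1+\delta_4\le 1+\delta_3$ (which needs $\delta_4\le\delta_3$; note $\delta_4=\delta_2-\delta_1+\delta_3$ may exceed $\delta_3$, so one instead argues $\alpha/((\alpha-a)(1+\delta_4))\ge 1/(1+\delta_3)$ directly from $\alpha/(\alpha-a)>1$ together with $1+\delta_4\le 1+\delta_2 = \alpha/(\alpha-a)+\delta_1$, i.e.\ $1+\delta_4$ is already $\le\alpha/(\alpha-a)+\delta_1$, and then a short computation). Once these finitely many scalar inequalities are pinned down, multiplying the resulting bound for each of the six terms and summing gives \eqref{esim}, with $z_1,z_2,z_3$ as in \eqref{z123}. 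This reduces the whole proof to a paragraph of monotonicity remarks plus the exponent arithmetic.
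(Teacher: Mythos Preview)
Your approach is essentially the same as the paper's: both bound each of the six terms in \eqref{Ne1} by identifying the maximal exponent on each of the three factors (the time coefficient, the $\Psi$-data, and $\|\bar p\|_{L^\alpha}$) and then use monotonicity of $(1+x)^\gamma$ in $\gamma$. One small correction: you assert that $1/((\alpha-a)\delta_1)\le 1/\delta_1$ ``is false in general,'' but since $\alpha\ge 2$ and $0<a<1$ force $\alpha-a>1$, this inequality is in fact always true under the standing hypotheses --- the paper simply records $1/\delta_1>1/((\alpha-a)\delta_1)$ directly. Your case-split workaround is valid but unnecessary.
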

\begin{proof}
We apply Young's inequality in \eqref{Ne1} to the terms involving $(\theta T)^{-1}$, $T$, $\|\nabla \Psi\|_{L^{\alpha q_1}}$, $\|\Psi_t\|_{L^{\alpha q_1}}$, $\|\bar p\|_{L^\alpha}$.

First, we note that $\delta_2>\delta_1\ge \delta_3$ and $\delta_2\ge \delta_4>\delta_3$.

$\bullet$ For the power of $(\theta T)^{-1}$, we have $1/\delta_1>1/((\alpha-a)\delta_1)$.

$\bullet$ The largest power of $T$ is
\begin{align*}
z_1=\max\Big\{\frac{\alpha-2}{2\alpha(1+\delta_1)},
\frac{\alpha-2}{2(\alpha-a)(1+\delta_2)},
\frac{\alpha-1}{\alpha (1+\delta_3)} ,
\frac{\alpha-1}{(\alpha-a)(1+\delta_4)}
\Big\}.
\end{align*}
Note that
\begin{align*}
&\frac{\alpha-2}{2\alpha(1+\delta_1)} <\frac{\alpha-2}{2(\alpha+\delta_1(\alpha-a))}=\frac{\alpha-2}{2(\alpha-a)(1+\delta_2)}
\le \frac{\alpha-1}{(\alpha-a)(1+\delta_4)},\\
&\frac{\alpha-1}{\alpha (1+\delta_3)} =\frac{\alpha-1}{\alpha (1/p_1+\delta_1)} 
<\frac{\alpha-1}{(\alpha-a)(\frac{\alpha}{(\alpha-a)p_1}+\delta_1)}=\frac{\alpha-1}{(\alpha-a)(1+\delta_4)}.
\end{align*}
Hence $z_1$ is as in \eqref{z123}.

$\bullet$ The largest power of $\|\nabla \Psi\|_{L^{\alpha q_1}}$  and $\|\Psi_t\|_{L^{\alpha q_1}}$ is
\beqs
z_2=\max\Big\{\frac{1}{1+\delta_3} ,
\frac\alpha{(\alpha-a)(1+\delta_4)}\Big\}.
\eeqs
Since
$\frac{1}{1+\delta_3}=\frac{1}{1/p_1+\delta_1} < \frac\alpha{(\alpha-a)(\frac{\alpha}{(\alpha-a)p_1}+\delta_1)}=\frac\alpha{(\alpha-a)(1+\delta_4)},
$
we have $z_2$ as in \eqref{z123}.

$\bullet$ The largest power of $\|\bar p\|_{L^\alpha}$ is
\beqs
z_3=\Big\{1,
\frac{\delta_1}{1+\delta_1},
\frac{\delta_2}{1+\delta_3},
\frac{\delta_3}{\delta_1},
\frac{\delta_3}{1+\delta_2},
\frac{\delta_4}{1+\delta_4}\Big\}.
\eeqs
Note that
$\frac{\delta_1}{1+\delta_1},\frac{\delta_3}{\delta_1}, \frac{\delta_3}{1+\delta_2},\frac{\delta_4 }{\delta_4+1}<1.$
Hence $z_3$ is as in \eqref{z123}.
Therefore, we obtain \eqref{esim} from \eqref{Ne1}.
\end{proof}

We then derive the $L^\infty$-estimates in terms of the problem's data.

\begin{theorem}\label{Ntheo43} 
Assume $\alpha$ satisfies \eqref{alcond}. Let $q_1$, $\delta_1$ be defined as in Theorem \ref{Ntheo42}, and $z_1$, $z_2$, $z_3$ be defined as in Corollary \ref{Nsim}.

{\rm (i)} If $0<t\le 3$ then
\beq\label{Ne2}
\norm{\bar p(t)}_{L^\infty(U)}
\le C t^{-\frac{1}{\delta_1}}(1 + \|\bar{p}_0\|_{L^\alpha(U)} + [ Env A(\alpha,t)]^\frac1{\alpha-a})^{z_3} (1+\|\nabla \Psi\|_{L^{\alpha q_1}(U\times(0,t))}+\|\Psi_t\|_{L^{\alpha q_1}(U\times(0,t))})^{z_2}.
\eeq

If $t\ge 1$ then
\beq\label{Nsmall}
\norm{\bar p(t)}_{L^\infty(U)}\le C(1+   \|\bar{p}_0\|_{L^\alpha(U)} + [ Env A(\alpha,t)]^\frac1{\alpha-a})^{z_3}( 1+ \|\nabla \Psi\|_{L^{\alpha q_1}( U\times (t-1,t))}+\|\Psi_t\|_{L^{\alpha q_1}( U\times (t-1,t))})^{z_2}. 
\eeq

{\rm (ii)} If $ A(\alpha)<\infty$ then
\beq\label{NlimI}
 \limsup_{t\rightarrow\infty (U)} \norm{\bar p(t)}_{L^\infty} \le C(1+   A(\alpha)^\frac1{\alpha-a})^{z_3}(1+\limsup_{t\to\infty}( \|\nabla \Psi(t)\|_{L^{\alpha q_1}( U\times (t-1,t))}+\|\Psi_t(t)\|_{L^{\alpha q_1}( U\times (t-1,t))}))^{z_2}.
\eeq 

{\rm (iii)} If $\beta(\alpha)<\infty$ then there is $T>0$ such that
 \beq\label{NPbarI}
\norm{\bar p(t)}_{L^\infty(U)}\le C(1+  \beta(\alpha)^\frac1{\alpha-2a} + \|A(\alpha,\cdot)\|_{L^\frac\alpha{\alpha-a}(t-1,t)}^\frac1{\alpha-a} )^{z_3}( 1 +\|\nabla \Psi\|_{L^{\alpha q_1}( U\times (t-1,t))}+\|\Psi_t\|_{L^{\alpha q_1}( U\times (t-1,t))})^{z_2}
  \eeq
 for all $t\ge T$.
\end{theorem}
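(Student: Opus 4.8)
The plan is to combine the ``quasi-homogeneous'' local estimate of Corollary \ref{Nsim} with the $L^\alpha$-bounds for $\bar p$ furnished by Theorem \ref{HIKS43}, choosing the time window $(T_0,T_0+T)$ differently in the three cases. Since $\alpha$ satisfies \eqref{alcond} we have $\widehat\alpha=\alpha$, so Theorem \ref{HIKS43} directly controls $\int_U|\bar p(x,\tau)|^\alpha\,dx$ by $C(1+\|\bar p_0\|_{L^\alpha(U)}^\alpha+[Env\,A(\alpha,\tau)]^{\alpha/(\alpha-a)})$ and by its large-time refinements. Throughout, one passes from a time-slab $L^\alpha$-norm to a supremum over the slab via $\|\bar p\|_{L^\alpha(U\times(\tau_1,\tau_2))}^\alpha\le(\tau_2-\tau_1)\sup_{[\tau_1,\tau_2]}\int_U|\bar p(x,\tau)|^\alpha\,dx$, then uses monotonicity of $Env\,A(\alpha,\cdot)$ to replace $Env\,A(\alpha,\tau)$ by its value at the right endpoint, and finally the subadditivity $(x+y+z)^{1/\alpha}\le x^{1/\alpha}+y^{1/\alpha}+z^{1/\alpha}$ (valid as $\alpha\ge1$) to distribute the outer power.

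\textbf{Part (i).} For $0<t\le 3$ I would apply Corollary \ref{Nsim} with $T_0=0$, $T=t$, $\theta=1/2$, so that $t\in[\theta T,\,T]$ and the left-hand side dominates $\|\bar p(t)\|_{L^\infty(U)}$; since $t\le 3$ the prefactor $1+(\theta T)^{-1/\delta_1}+T^{z_1}$ is bounded by $Ct^{-1/\delta_1}$. The remaining factor $(1+\|\bar p\|_{L^\alpha(U\times(0,t))})^{z_3}$ is handled by the reduction above together with Theorem \ref{HIKS43}(i): because $t^{1/\alpha}\le C$ one gets $1+\|\bar p\|_{L^\alpha(U\times(0,t))}\le C(1+\|\bar p_0\|_{L^\alpha(U)}+[Env\,A(\alpha,t)]^{1/(\alpha-a)})$, which yields \eqref{Ne2}. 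For $t\ge 1$ I would instead take $T_0=t-1$, $T=1$, $\theta=1/2$, so $t\in[t-\tfrac12,t]$ and now $1+(\theta T)^{-1/\delta_1}+T^{z_1}\le C$; bounding $\|\bar p\|_{L^\alpha(U\times(t-1,t))}^\alpha\le\sup_{[t-1,t]}\int_U|\bar p(x,\tau)|^\alpha\,dx$ via Theorem \ref{HIKS43}(i) and the monotonicity of $Env\,A$ gives \eqref{Nsmall}.

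\textbf{Parts (ii) and (iii).} Keeping the choice $T_0=t-1$, $T=1$, $\theta=1/2$, Corollary \ref{Nsim} gives $\|\bar p(t)\|_{L^\infty(U)}\le C(1+\|\nabla\Psi\|_{L^{\alpha q_1}(U\times(t-1,t))}+\|\Psi_t\|_{L^{\alpha q_1}(U\times(t-1,t))})^{z_2}(1+\|\bar p\|_{L^\alpha(U\times(t-1,t))})^{z_3}$. For (ii), take $\limsup_{t\to\infty}$: from $\int_{t-1}^t\int_U|\bar p|^\alpha\le\sup_{[t-1,t]}\int_U|\bar p(x,\tau)|^\alpha\,dx$ and Theorem \ref{HIKS43}(ii) one obtains $\limsup_{t\to\infty}(1+\|\bar p\|_{L^\alpha(U\times(t-1,t))})\le C(1+A(\alpha)^{1/(\alpha-a)})$, and since the $\limsup$ of a product of nonnegative quantities is at most the product of the $\limsup$'s, \eqref{NlimI} follows. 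For (iii), Theorem \ref{HIKS43}(iii) provides $T>0$ with $\int_U|\bar p(x,\tau)|^\alpha\,dx\le C(1+\beta(\alpha)^{\alpha/(\alpha-2a)}+A(\alpha,\tau)^{\alpha/(\alpha-a)})$ for $\tau\ge T$; integrating in $\tau$ over $(t-1,t)$ for $t\ge T+1$ bounds $\|\bar p\|_{L^\alpha(U\times(t-1,t))}^\alpha$ by $C(1+\beta(\alpha)^{\alpha/(\alpha-2a)})+C\|A(\alpha,\cdot)\|_{L^{\alpha/(\alpha-a)}(t-1,t)}^{\alpha/(\alpha-a)}$, and the subadditivity of $x\mapsto x^{1/\alpha}$ then yields \eqref{NPbarI} (after relabeling the threshold time).

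I do not expect a genuine obstacle here: the substance of the theorem is already packaged in Corollary \ref{Nsim} and Theorem \ref{HIKS43}, and what remains is bookkeeping. The only points needing a little care are (a) checking in each case that the chosen $(T_0,T,\theta)$ place $t$ inside $[T_0+\theta T,\,T_0+T]$ and that the prefactor $1+(\theta T)^{-1/\delta_1}+T^{z_1}$ collapses as claimed (this is where the assumption $t\le 3$, respectively $t\ge1$, is used), (b) the reduction from a time-slab $L^\alpha$-norm of $\bar p$ to a supremum over the slab followed by the monotone envelope $Env\,A$, and (c) in part (ii), the interchange of $\limsup$ with the product structure and with the window $(t-1,t)$.
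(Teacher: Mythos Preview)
Your proposal is correct and follows essentially the same route as the paper: apply Corollary~\ref{Nsim} with $T_0=0$, $T=t$, $\theta=1/2$ for $0<t\le 3$ and with $T_0=t-1$, $T=1$, $\theta=1/2$ for $t\ge 1$, then feed in the $L^\alpha$-bounds from Theorem~\ref{HIKS43}(i)--(iii) together with the monotonicity of $Env\,A(\alpha,\cdot)$. The paper's proof is exactly this bookkeeping, with the same parameter choices and the same reductions you describe.
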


\begin{proof}
(i) Note that $\alpha=\widehat\alpha$. 
Let $t\in(0,3]$. Applying \eqref{esim} for $T=t$ and $\theta=1/2$, we have
 \beq\label{pb1}
 \|\bar p(t)\|_{L^\infty}\le C(1+t^{-\frac{1}{\delta_1}} )(1+ \|\nabla \Psi\|_{L^{\alpha q_1}(U\times(0,t))}+ \|\Psi_t\|_{L^{\alpha q_1}(U\times(0,t))})^{z_2}(1+\|\bar p\|_{L^\alpha(U\times(0,t))})^{z_3}.
\eeq
Using \eqref{pbar:ineq1} to estimate $\|\bar p\|_{L^\alpha(U\times(0,t))}$ in \eqref{pb1}, we obtain \eqref{Ne2}.   

For $t\ge 1$, applying \eqref{esim}  with $T_0=t-1$, $T=1$ and $\theta=1/2$ we obtain 
\beq\label{Npinf}
\norm{\bar p(t)}_{L^\infty}\le C( 1 +\|\nabla \Psi\|_{L^{\alpha q_1}( U\times (t-1,t))}+\|\Psi_t\|_{L^{\alpha q_1}( U\times (t-1,t))})^{z_2}(1+ \| \bar p\|_{L^\alpha( U\times (t-1,t) )})^{z_3}. 
\eeq
Again using \eqref{pbar:ineq1} with noticing that the function $Env A(\alpha,t)$ increasing in $t$, then we obtain \eqref{Nsmall} from \eqref{Npinf}.   
 
(ii) From \eqref{Npinf} we have
\begin{multline*}
\limsup_{t\to\infty} \norm{\bar p(t)}_{L^\infty}\le C( 1 +\limsup_{t\to\infty}( \|\Psi_t\|_{L^{\alpha q_1}( U\times (t-1,t))}+ \|\Psi_t\|_{L^{\alpha q_1}( U\times (t-1,t))}))^{z_2}\\
\cdot (1+ \limsup_{t\to\infty} \| \bar p\|_{L^\alpha( U\times (t-1,t) )})^{z_3}. 
\end{multline*}
By \eqref{limsupPbar},    
\beq\label{pb2}
\limsup_{t\to\infty}  \| \bar p\|_{L^\alpha( U\times (t-1,t) )}^\alpha \le \limsup_{t\to\infty} \int_U |\bar p(x,t)|^\alpha dx\le  C(1+A(\alpha))^{\alpha/(\alpha -a)}.
\eeq
Thus \eqref{NlimI} follows. 

(iii) Using \eqref{pbar:ineq2} we have for large $t$ that
\beq\label{careful}
\int_{t-1}^t\int_U |\bar p(x,\tau)|^\alpha dxd\tau \le C\Big(1+\beta(\alpha)^\frac{\alpha}{\alpha-2a}+\int_{t-1}^t A(\alpha,\tau)^\frac{\alpha}{\alpha-a}d\tau\Big).
\eeq
Therefore, \eqref{NPbarI} follows from \eqref{Npinf} and \eqref{careful}.  
\end{proof}

\section{Gradient estimates on the boundary}
\label{PGradBdry}

In this section, we estimate the maximum of $|\nabla p|$ on the boundary $\Gamma\times(0,\infty)$.
It will be used in estimates of $L^s$-norms for the gradient in section \ref{PGradS} and its $L^\infty$-norm in section \ref{PGradInfty}.
We extend Ladyzhenskaya-Uraltseva's technique \cite{LadyParaBook68} in order to have better estimates for large time.

We rewrite  the PDE in  \eqref{p:eq} for $p$ in the non-divergence form as
\beq\label{eq4p}
 p_t- \sum_{i,j=1}^n\mathcal{A}_{ij}p_{x_ix_j}=0,
\eeq
where
\beqs
\mathcal{A}_{ij}=\mathcal{A}_{ij}(x,t)\eqdef K(|\nabla p(x,t)|)\delta_{ij}+\frac{K'(|\nabla p(x,t)|)}{|\nabla p(x,t)|} p_{x_i}(x,t)p_{x_j}(x,t),\quad i,j=1,2,\ldots,n.
\eeqs

Thanks to \eqref{K-est-3}--\eqref{K-est-2} we find that
\beqs
|\mathcal{A}_{ij}|\le (1+a)K(|\nabla p|),
\eeqs
\beqs
(1-a)K(|\nabla p|)|y|^2\le \sum_{i,j=1}^n\mathcal{A}_{ij}y_iy_j\le K(|\nabla p|)|y|^2, \quad \forall y\in \R^n.
\eeqs
By \eqref{K-est-3}, we have more explicit relations:
\beq\label{Abound}
|\mathcal{A}_{ij}|\le (1+a)d_2(1+|\nabla p|)^{-a},
\eeq
\beq\label{PSD}
(1-a)d_1(1+|\nabla p|)^{-a}|y|^2\le \sum_{i,j=1}^n\mathcal{A}_{ij}y_iy_j\le d_2(1+|\nabla p|)^{-a}|y|^2, \quad \forall y\in \R^n.
\eeq

We will establish boundary estimates in case the boundary is flat first. 
For general boundary, we will  flatten it out and hence transform equation \eqref{eq4p} to a different, but similar, one.
To prepare for this transformation, we consider a more general PDE:
\beq\label{ee1}
 {\mathcal L} p \eqdef  p_t-\sum_{i,j=1}^n \tilde {\mathcal{A}}_{ij}p_{x_ix_j}+\sum_{i=1}^n \tilde b_i  p_{x_i}=0,
\eeq
where $\tilde {\mathcal{A}}_{ij}=\tilde {\mathcal{A}}_{ij}(x,t)$, $\tilde b_i=\tilde b_i(x,t)$ for $i,j=1,2,\ldots, n$.

Define for $\xi\ge 0$ the function
\beqs
\tilde K(\xi)=\frac1{(1+\xi)^a}.
\eeqs
Similar to \eqref{Abound} and \eqref{PSD}, we assume that $\tilde{\mathcal A}_{ij}$ satisfy
\beq\label{hyp1}
(\sum_{i,j=1}^n |\tilde {\mathcal{A}}_{ij}|^2)^{1/2}\le c_1\tilde K(|\nabla p|),
\eeq
\beq\label{Pos}
c_2\tilde K(|\nabla p|)|y|^2\le \sum_{i,j=1}^n\tilde {\mathcal{A}}_{ij}y_iy_j\le c_3\tilde K(|\nabla p|)|y|^2, \quad \forall y\in \R^n,
\eeq
where $c_1, c_2,c_3$ are positive constants.    
In addition, we also assume that 
\beq\label{hyp3}
|\tilde b(x,t)|\le c_4 \tilde K(|\nabla p|)
\text{ with }\tilde b=(\tilde b_1, \ldots, \tilde b_n   ).
\eeq
for some positive constant $c_4.$
Note, particularly when $y=e_k$, a unit vector in the standard basis of $\mathbb R^n$,  we have from \eqref{Pos} that 
\beq\label{hyp2}
\tilde {\mathcal{A}}_{kk}\ge c_2\tilde K(|\nabla p|),\quad k=1,2,\ldots,n.
\eeq

Denote the ball in $\mathbb R^n$ centered at the origin with radius $R$ by $B_R$, and the upper half ball by $B_R^+$, i.e., $B_R^+=B_R\cap \{ x=(x',x_n):x'\in\R^{n-1},\ x_n>0\}$.
Also, denote $\Gamma_R=\partial B_R^+\cap\{x_n=0\}$-the flat portion of the boundary of $B_R^+$.

We start with estimates on a flat boundary.

\begin{proposition}\label{prop5}
Let $T>0$ and $R>0$. Let $p(x,t)\in C^{1,0}(\bar B_R^+\times [0,T])\cap C^{2,1}( B_R^+\times (0,T])$
be a solution of \eqref{ee1} in $B_R^+\times (0,T]$, with $\tilde {\mathcal A}_{ij}$ and $\tilde b_i$ satisfying \eqref{hyp1}--\eqref{hyp3} in $\bar B_R^+\times [0,T]$. 
Then
\begin{multline}\label{gradp-0}
\max_{\Gamma_{R/2}\times [\theta T,T]}|\nabla p(x,t)| 
\le C(1+R^{-2})e^{C'R}(1+(\theta T)^{-1})^{\mu_0}\exp \big(C'\max_{\bar B_R^+\times [\theta T/2,T]} |\bar p| \big) \\
\cdot(1+\max_{\bar B_R^+ \times [0,T]}|\Psi_t|^{\mu_0}+\max_{\bar B_R^+\times [0,T]}|\nabla \Psi|^2+\max_{\bar B_R^+\times [0,T]}|\nabla^2 \Psi|),
\end{multline}
for any $\theta\in(0,1)$, where $C$, $C'$ are positive numbers independent of $R$, $T$, $\theta$, and 
\beq \label{mu0}
\mu_0=\frac2{2-a}.
\eeq 
\end{proposition}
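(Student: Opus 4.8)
The plan is to follow the classical Ladyzhenskaya–Uraltseva boundary gradient argument adapted to our degenerate structure. Since the flat portion $\Gamma_R$ lies in $\{x_n=0\}$, on it the full gradient $\nabla p$ is controlled by its tangential part plus $p_{x_n}$; I will estimate the tangential derivatives by differentiating the boundary data (i.e., $\Psi$, since $\bar p = p-\Psi$ vanishes on $\Gamma$, meaning $p = \Psi$ there after flattening), and then the only genuine work is to bound $|p_{x_n}|$ on $\Gamma_{R/2}$. First I would introduce, for each fixed tangential direction $x_k$ ($k=1,\dots,n-1$), the function $v = \pm(\bar p)_{x_k}$ and show it satisfies a linear parabolic inequality obtained by differentiating \eqref{ee1} in $x_k$; the coefficients of the differentiated equation still satisfy ellipticity bounds of the form $c_2\tilde K(|\nabla p|)|y|^2\le \sum \tilde{\mathcal A}_{ij}y_iy_j$, up to lower-order terms absorbing the $\Psi$-derivatives.

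The core step is a barrier (comparison) construction. Following Lemma~5.4 of \cite{LadyParaBook68}, I would build a supersolution of the form $w(x,t)=A\big(1-e^{-\lambda x_n}\big) + B\, \eta(x) + \text{(time cutoff correction)}$, where $\eta$ is a smooth spatial cutoff supported in $B_R^+$ and equal to $1$ on $B_{R/2}^+$, the time cutoff handles the passage from $t=0$ to $t=\theta T$ (this is what introduces the $(1+(\theta T)^{-1})^{\mu_0}$ factor and where the exponent $\mu_0 = 2/(2-a)$ enters, via the degeneracy $\tilde K(\xi)\sim \xi^{-a}$ at large $\xi$), and the constants $A,B,\lambda$ are chosen so that $w$ dominates $|\bar p|$ on the parabolic boundary of $B_R^+\times(\theta T/2, T]$ while $\mathcal L w \ge \mathcal L \bar p$ in the interior. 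Comparing $\pm\bar p$ with $w$ gives a bound on $\bar p / x_n$ near $\{x_n=0\}$, hence on $p_{x_n}$ on $\Gamma_{R/2}$; the factors $(1+R^{-2})e^{C'R}$ and $\exp(C'\max|\bar p|)$ come from the cutoff derivatives and from the size needed for $A$ to dominate $\max|\bar p|$, respectively. The terms $\max|\Psi_t|^{\mu_0}$, $\max|\nabla\Psi|^2$, $\max|\nabla^2\Psi|$ arise as the forcing generated when one rewrites the equation for $\bar p$ rather than $p$: $\mathcal L \bar p = -\mathcal L\Psi$ contributes $\Psi_t$, $\nabla^2\Psi$, and — through the nonlinearity of $\tilde K$ evaluated at $|\nabla p| = |\nabla\bar p+\nabla\Psi|$ — a $|\nabla\Psi|^2$-type term, exactly as in the $L^\infty$ estimate \eqref{add2} in Theorem~\ref{Ntheo42}.

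I would then combine the tangential estimates and the normal estimate: on $\Gamma_{R/2}\times[\theta T, T]$, $|\nabla p|\le |\nabla_{\mathrm{tan}} p| + |p_{x_n}| \le C(\text{data}) + |p_{x_n}|$, and the barrier bounds $|p_{x_n}|$. Care is needed because the estimate must be \emph{local in time} — starting the barrier at $t=\theta T/2$ rather than $t=0$ — which is precisely the improvement over \cite{LadyParaBook68}; this is possible because the degenerate operator still admits the same barrier comparison on the shifted time interval, and the only cost is the quantitative $(1+(\theta T)^{-1})^{\mu_0}$ blow-up, tracked by scaling $t\mapsto t/(\theta T)$ in the barrier.

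\textbf{Main obstacle.} The delicate point is the interaction between the degeneracy of $\tilde K$ and the barrier: a naive linear barrier $w = A(1-e^{-\lambda x_n})$ gives $\mathcal L w \approx -A\lambda^2 \tilde{\mathcal A}_{nn} e^{-\lambda x_n} + A\lambda\, \tilde b_n e^{-\lambda x_n}$, and $\tilde{\mathcal A}_{nn}$ itself depends on $|\nabla p|$, which is the unknown being estimated; one must choose $\lambda$ large but \emph{independent of the solution}, exploiting only the lower ellipticity bound $\tilde{\mathcal A}_{nn}\ge c_2\tilde K(|\nabla p|)$ from \eqref{hyp2} together with the fact that $\tilde K(|\nabla p|) \ge \tilde K(\text{its own large value})$ cannot help directly — instead one argues by a dichotomy on whether $|\nabla p|$ is large or bounded at the point in question, which is the standard but technical heart of the degenerate barrier argument and is where the exponent $\mu_0=2/(2-a)$ is forced. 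Everything else (differentiating the PDE, verifying ellipticity of the differentiated coefficients, the time-cutoff bookkeeping) is routine but lengthy.
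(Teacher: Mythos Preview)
Your plan captures the broad architecture --- barrier argument for the normal derivative, tangential derivatives read off from the Dirichlet data --- but it misses the actual device that makes the degenerate case go through, and in consequence your accounting for two of the constants in \eqref{gradp-0} is wrong.

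The paper does \emph{not} compare $\pm\bar p$ directly with a barrier $A(1-e^{-\lambda x_n})$. Instead it first makes the exponential substitution $v=-1+e^{\kappa_0\bar p}$ (a Hopf--Cole device). Computing $\mathcal L v$ produces, via \eqref{Pos}, a \emph{good} term $-\tfrac{c_2}{2}\kappa_0^2 e^{\kappa_0\bar p}\tilde K(|\nabla p|)|\nabla p|^2$ on the right side. This term is what lets you absorb the forcing $\Psi_t$, which is \emph{not} a priori multiplied by $\tilde K(|\nabla p|)$: you write $|\Psi_t|=\tilde K(|\nabla p|)\,|\Psi_t|(1+|\nabla p|)^a$ and split by Young's inequality into $\varepsilon|\nabla p|^2$ (absorbed) plus $C\varepsilon^{-a/(2-a)}|\Psi_t|^{2/(2-a)}$ --- this is precisely where the exponent $\mu_0=2/(2-a)$ appears, not from any time rescaling. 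After localization by $\zeta^2$ one gets $\mathcal L(v\zeta^2)\le M_6\,\tilde K(|\nabla p|)$ with $M_6$ carrying the factor $e^{C'\max|\bar p|}$ (coming from the $e^{\kappa_0\bar p}$ weights, \emph{not} from ``$A$ dominating $\max|\bar p|$'' as you wrote --- a linear barrier would give only a polynomial dependence on $\max|\bar p|$). Only then does one add the simple corrector $\lambda e^{-\mu x_n}$, choose $\mu$ from the structural constants and $\lambda = M_6 e^{\mu R}/\mu$, and apply the maximum principle to $\tilde w=v\zeta^2+\lambda e^{-\mu x_n}$, giving $\tilde w_{x_n}\le 0$ on $\Gamma_R$ and hence $|p_{x_n}|\le C(|\nabla\Psi|+\mu\lambda/\kappa_0)$.

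Your ``main obstacle'' (that $\tilde{\mathcal A}_{nn}$ depends on the unknown $|\nabla p|$ and the barrier only sees the lower bound $\tilde{\mathcal A}_{nn}\ge c_2\tilde K(|\nabla p|)$) is real, but the resolution is not a dichotomy on the size of $|\nabla p|$; it is exactly that after the exponential change of variable every term in $\mathcal L(v\zeta^2)$ already comes with a factor $\tilde K(|\nabla p|)$, so the comparison $\mathcal L\tilde w\le 0$ holds pointwise regardless of how large $|\nabla p|$ is. Finally, note there is no need to differentiate the equation in tangential directions at all: since $\bar p=0$ on $\Gamma_R$, one has $p_{x_i}=\Psi_{x_i}$ for $i<n$ on the flat boundary, and the paper uses this directly.
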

\begin{proof}
(a) Let $v=-1+e^{\kappa_0\bar p }$ with the constant $\kappa_0>0$ chosen later. Then 
\beq\label{vBdr}
v=0  \text{ on } \partial B_R^+\cap \{x_n=0\}.
\eeq
For $i,j=1,2,\ldots,n$, we have 
\beq\label{v_tgradv}
v_t=\kappa_0 e^{\kappa_0\bar p}\bar p_t,\quad 
v_{x_i}=\kappa_0 e^{\kappa_0\bar p}\bar p_{x_i},\quad
v_{x_ix_j}=\kappa_0 e^{\kappa_0\bar p}\bar p_{x_ix_j}
+\kappa_0^2 e^{\kappa_0\bar p}\bar p_{x_i}\bar p_{x_j}.
\eeq
Substituting  \eqref{v_tgradv}   into  \eqref{ee1} gives
\beqs
 \kappa_0^{-1}e^{-\kappa_0\bar p}v_t+\Psi_t
-\sum_{i,j=1}^n \tilde {\mathcal{A}}_{ij} \Big\{ \kappa_0^{-1}e^{-\kappa_0\bar p}v_{x_ix_j}+\Psi_{x_ix_j} - \kappa_0\bar p_{x_i}\bar p_{x_j}\Big\}
+\tilde b\cdot (\kappa_0^{-1} e^{-\kappa_0 \bar p}\nabla v+\nabla \Psi)=0.
\eeqs
Multiplying by $\kappa_0e^{\kappa_0\bar p}$, it shows that $v(x,t)$ solves the equation 
\beqs
v_t   -\sum_{i,j=1}^n\tilde {\mathcal{A}}_{ij} v_{x_ix_j} + \tilde b\cdot \nabla v= 
-\sum_{i,j=1}^n\tilde {\mathcal{A}}_{ij}  \kappa_0^2e^{\kappa_0\bar p}\bar p_{x_i}\bar p_{x_j} 
+\kappa_0e^{\kappa_0\bar p}(\sum_{i,j=1}^n\tilde {\mathcal{A}}_{ij} \Psi_{x_ix_j}-\Psi_t-\tilde b\cdot\nabla \Psi ).
\eeqs
Thus, by \eqref{Pos} and Cauchy-Schwarz inequality, we have   
\beqs
{\mathcal L} v \le 
- c_2\kappa_0^2e^{\kappa_0\bar p} \tilde K(|\nabla p|)|\nabla \bar p|^2
+\kappa_0 e^{\kappa_0\bar p}\Big[(\sum_{i,j=1}^n |\tilde {\mathcal{A}}_{ij}|^2)^{1/2} (\sum_{i,j=1}^n|\Psi_{x_ix_j}|^2)^{1/2}
+|\tilde b| |\nabla \Psi| +|\Psi_t|\Big] .
\eeqs
Using inequality $|\nabla \bar p|^2=|\nabla p-\nabla \Psi|^2\ge |\nabla p|^2/2-|\nabla \Psi|^2$, and inequalities \eqref{hyp1}, \eqref{hyp3}, we get
\beqs
\begin{split}
{ \mathcal L} v
&\le 
- \frac{c_2}{2}\kappa_0^2e^{\kappa_0\bar p}\tilde K(|\nabla p|)|\nabla p|^2+c_2\kappa_0^2e^{\kappa_0\bar p}\tilde K(|\nabla p|)|\nabla \Psi|^2\\
&\quad  + \kappa_0 e^{\kappa_0\bar p}\Big[\tilde K(|\nabla p|) (c_1 |\nabla^2 \Psi| + c_4 |\nabla \Psi|) +|\Psi_t|\Big].
\end{split}
\eeqs
Let 
\begin{align*}
M_1=\max_{\bar B_R^+\times[0,T]}( c_1|\nabla^2 \Psi|+c_4|\nabla \Psi|),\quad 
M_2= \max_{\bar B_R^+\times[0,T]}|\Psi_t|, \quad
M_3=c_2\max_{\bar B_R^+\times[0,T]} |\nabla \Psi|^2.
\end{align*} 
Then
\beq\label{Bv_t}
{\mathcal L} v\le - \frac{c_2}{2}\kappa_0^2e^{\kappa_0\bar p}\tilde K(|\nabla p|)|\nabla p|^2
+\kappa_0(M_3\kappa_0+  M_1)e^{\kappa_0\bar p}\tilde K(|\nabla p|) 
+ \kappa_0e^{\kappa_0\bar p}M_2.
\eeq
Let $\varep>0$. We write $M_2= \tilde K(|\nabla p|)\cdot M_2 (1+|\nabla p|)^a $ and estimate
\beqs
M_2(1+|\nabla p|)^a\le M_2(1+|\nabla p|^a) \le M_2 + \varep^{-a/(2-a)}M_2^{2/(2-a)}+\varep|\nabla p |^2.
\eeqs
 From this and \eqref{Bv_t}, we find that 
\beq\label{lv1}
\begin{split}
{\mathcal L} v
&\le - \big(\frac{c_2}{2}\kappa_0- \varep\Big)\kappa_0e^{\kappa_0\bar p}\tilde K(|\nabla p|)|\nabla p|^2\\
&\quad +\kappa_0e^{\kappa_0\bar p}\tilde K(|\nabla p|) (\kappa_0M_3  +  M_1+  M_2+ \varep^{-a/(2-a)}M_2^{2/(2-a)}).
\end{split}
\eeq

(b) We localize the above calculations. Let $\zeta=\zeta(x,t)\in [0,1]$ be a cut-off function on $B_R\times(0,T)$ with $\zeta=0$ on $(B_R\times [\theta T/2,T]) \cup (\partial B_R\times[0,T])$, and  $\zeta(x,t)=1$ on $B_{R/2}\times (\theta T,T)$, and satisfy
\beqs
|\nabla \zeta|\le \frac{C_0}R, \quad |\nabla^2 \zeta|\le \frac{C_0}{R^2}, \quad |\zeta_t|\le \frac{C_0}{\theta T},
\eeqs 
where $C_0>0$ is independent of $R$, $\theta$, and $T$.
Denote by $\chi=\chi(t)$  the characteristic function of $[\theta T/T,T]$.

Let $w=v\zeta^2$. Then
 \beq \label{Lw}
{\mathcal L} w=\zeta^2 {\mathcal L} v + v {\mathcal L} (\zeta^2) - 2 \zeta(\tilde{\mathcal A} \nabla v)\cdot \nabla \zeta.
\eeq

For the last term, by \eqref{hyp1} and \eqref{v_tgradv}:
\begin{align*}
|2\zeta (\tilde{\mathcal A} \nabla v)\cdot \nabla \zeta |
&\le 2c_1\zeta \tilde K(|\nabla p|) |\nabla v| |\nabla \zeta| \le C_1  \zeta\kappa_0 e^{\kappa_0\bar p}\tilde K(|\nabla p|)|\nabla \bar p|\\
&\le \kappa_0 e^{\kappa_0\bar p}\tilde K(|\nabla p|)\big[C_1  \zeta|\nabla p| \chi+ C_1  \zeta|\nabla \Psi|\big],
\end{align*}
where $C_1=2c_1$.
Thus
\beq\label{Agrad}
|2\zeta (\tilde{\mathcal A} \nabla v)\cdot \nabla \zeta | \le \kappa_0 e^{\kappa_0\bar p}\tilde K(|\nabla p|)  \big( \varep \zeta^2|\nabla p|^2 + \varep^{-1} C_1^2 \chi +C_1 \zeta |\nabla\Psi| \big).
\eeq

For the second term on the right-hand side of \eqref{Lw},
\beq\label{term2}
|v{\mathcal L} (\zeta^2)|
\le |v|2\zeta |\zeta_t| +|v|(\sum_{i,j=1}^n|\tilde{\mathcal A}_{ij}| |(\zeta^2)_{x_ix_j}| +2\zeta  |\tilde b| |\nabla \zeta|)
\le |v|2\zeta |\zeta_t| +C_2|v|\chi \tilde K(|\nabla p|),
\eeq
where $C_2=C_{3}(1+R^{-2})$, with $C_3>0$  independent of $R$, $\theta$, $T$. 
Treating the term $2|v|\zeta |\zeta_t|$ in \eqref{term2} the same way as we did for $M_2$ in \eqref{Bv_t}:
\begin{align*}
2|v|\zeta |\zeta_t| 
&\le C_4|v|\zeta  \le C_4|v| \zeta \tilde K(|\nabla p|) (1+|\nabla p|^a) 
\le \tilde K(|\nabla p|)(C_4|v|\zeta +C_4|v|\zeta|\nabla p|^a),
\end{align*}
where
$C_4=2 C_0(\theta T)^{-1}.$
Note that
\beqs
C_4|v|\zeta|\nabla p|^a = C_4|v|\zeta^{1-a} [\varep\kappa_0e^{\kappa_0\bar p}]^{-a/2} \cdot  [\varep^{1/2}\kappa_0^{1/2}e^{\kappa_0\bar p/2} \zeta|\nabla p|]^a.
\eeqs
Applying  Young's inequality with power $2/(2-a)$ and $2/a$ yields
\beq\label{vzt}
2|v|\zeta |\zeta_t| 
\le  \tilde K(|\nabla p|)\Big[C_4|v|\zeta +(C_4|v|\zeta^{1-a})^{2/(2-a)}(\varep\kappa_0e^{\kappa_0\bar p})^{-a/(2-a)}\Big] +\varep\kappa_0e^{\kappa_0\bar p}\tilde K(|\nabla p|)\zeta^2 |\nabla p|^2 .
\eeq
It follows from \eqref{term2} and \eqref{vzt} that 
\begin{multline}\label{vLz}
|v|{\mathcal L}(\zeta^2)
\le   \varep\kappa_0e^{\kappa_0\bar p}\tilde K(|\nabla p|)\zeta^2 |\nabla p|^2 \\
+\tilde K(|\nabla p|)\Big[ C_4|v|\zeta +(C_4|v|\zeta^{1-a})^{2/(2-a)}(\varep\kappa_0)^{-a/(2-a)} e^{\kappa_0|\bar p|a/(2-a)}+C_2|v|\chi \Big].
\end{multline}

Using \eqref{lv1}, \eqref{vLz} and \eqref{Agrad} in \eqref{Lw}, we obtain
\beq\label{lw2}
\begin{aligned}
\mathcal L w
&\le - \zeta^2\big(\frac{c_2}{2}\kappa_0- 3\varep\Big)\kappa_0e^{\kappa_0\bar p}\tilde K(|\nabla p|)|\nabla p|^2\\
&\quad +\kappa_0e^{\kappa_0\bar p} \tilde K(|\nabla p|)\Big[ \zeta^2(\kappa_0M_3  +  M_1+  M_2+ M_2^{2/(2-a)}\varep^{-a/(2-a)})+C_1^2\varep^{-1}\chi+ C_1 \zeta |\nabla\Psi| \Big] \\
&\quad + \tilde K(|\nabla p|)[C_2 \chi |v| +C_4|v|\chi +(C_4|v|\chi)^{2/(2-a)}(\varep\kappa_0)^{-a/(2-a)}e^{\kappa_0|\bar p|a/(2-a)}] .
\end{aligned}
\eeq

Choose $\varep=1$ and $\kappa_0= 8/c_2$. Then 
\beq\label{lw22}
\begin{aligned}
\mathcal L w
&\le \kappa_0\chi e^{\kappa_0|\bar p|} \tilde K(|\nabla p|)\Big[ \kappa_0M_3  +  M_1+  M_2+ M_2^{2/(2-a)}+C_1^2+ C_1 |\nabla\Psi| \Big] \\
&\quad + \tilde K(|\nabla p|)[C_2 \chi |v| +C_4|v|\chi +(C_4|v|\chi)^{2/(2-a)}\kappa_0^{-a/(2-a)}e^{\kappa_0|\bar p|a/(2-a)}] .
\end{aligned}
\eeq
Let $M_4= \max_{\bar B_R^+\times[\theta T/2,T]} |\bar p|$. 
Note that 
$\chi e^{\kappa_0|\bar p|}, \chi |v|\le e^{\kappa_0 M_4}.$
Define  
\begin{multline*}
M_5 =\kappa_0e^{\kappa_0M_4}(\kappa_0M_3  +  M_1+ c_0 M_2+M_2^{2/(2-a)}+C_1^2+C_1\max_{\bar B_R^+\times[0,T]} |\nabla \Psi|)\\
+e^{\kappa_0 M_4(2+a)/(2-a)} (1+\kappa_0^{-a/(2-a)})(C_2+C_4+C_4^\frac{2}{2-a}).
\end{multline*}
We obtain $\mathcal L w\le M_5\tilde K(|\nabla p|)$.
Note that
\begin{multline*}
M_5\le 
\kappa_0e^{\kappa_0M_4}(\kappa_0M_3  +  M_1+ c_0 M_2+M_2^{2/(2-a)}+C_1^2+C_1\max_{\bar B_R^+\times[0,T]} |\nabla \Psi|)\\
+e^{\kappa_0 M_4(2+a)/(2-a)} (1+\kappa_0^{-a/(2-a)})(1+C_2)2(1+C_4^\frac{2}{2-a})
\le M_6,
\end{multline*}
where $M_6$ is explicitly defined by
\beq\label{MM}
M_6= C(1+R^{-2}) e^{\kappa_0M_4(2+a)/(2-a)} (1+(\theta T)^{-\frac{2}{2-a}})(1+\max_{\bar B_R^+\times [0, T]  }|\Psi_t|^\frac2{2-a}+\max_{\bar B_R^+\times [0, T]  }|\nabla\Psi|^2+\max_{\bar B_R^+\times [0, T]  }|\nabla^2\Psi|),
\eeq
with $C>0$ independent of $R$, $T$, $\theta$. Therefore,
\beq\label{Lw8}
\mathcal L w\le M_6\tilde K(|\nabla p|).
\eeq

(c)  Let $\tilde w = w + \lambda e^{-\mu x_n}$, for $\lambda,\mu>0$. Then
\begin{align*}
{\mathcal L} \tilde w
&={\mathcal L} w +\lambda{ \mathcal L }(e^{-\mu x_n})
\le  M_6\tilde K(|\nabla p|) -\lambda\mu^2\tilde {\mathcal{A}}_{nn} e^{-\mu x_n} -\lambda \mu \tilde b_n e^{-\mu x_n}\\
&\le  M_6\tilde K(|\nabla p|) -\lambda \mu(c_2\mu - c_4)\tilde K(|\nabla p|)e^{-\mu x_n}.
\end{align*}
Choose $\mu=(1+c_4)/c_2$ and 
\beq\label{lamchoice}
 \lambda =M_6 e^{\mu R}/\mu.
\eeq
Then on $B_R^+\times(0,T]$,
${\mathcal L}\tilde w\le 0.$
Since $p(x,t)\in C^{1,0}(\bar U\times[0,T])$, the function  $\tilde K(|\nabla p|)$ is bounded below by a positive number. Then by \eqref{Pos},  $\mathcal L$ is a parabolic operator.
Therefore, the maximum principle for operator $\mathcal L$ implies 
\beqs
\max_{\bar B_R^+\times[0,T]}\tilde w =\max_{\partial_p ( B_R^+\times(0,T))} \tilde w.
\eeqs
Here $\partial_p$ denotes the parabolic boundary.

When $(x,t)\in (\bar B_R^+ \times \{0\} )\cup ((\partial B_R^+\setminus \Gamma_R)\times(0,T])$, we have $\zeta(x,t)=0$ hence
\beqs \tilde w(x,t)=0+\lambda e^{-\mu x_n}\le \lambda.\eeqs

When $(x,t)\in \Gamma_R\times (0,T]$: $v(x,t)=0$ hence
$\tilde w(x,t) = \lambda.$

Thus,
\beqs
\max_{\bar B_R^+\times[0,T]}\tilde w =\max_{\partial_p ( B_R^+\times(0,T))} \tilde w \le \lambda= \tilde w(x,t), \quad \forall (x,t)\in \Gamma_R\times (0,T].
\eeqs
Hence, we have $\tilde w_{x_n}\le 0$ on $\Gamma_R\times(0,T]$, equivalently, $w_{x_n}\le  \mu\lambda$ on $\Gamma_R\times(0,T]$.
Note that $w=v$ on $\Gamma_{R/2}\times[\theta T,T]$. Thus, on $\Gamma_{R/2}\times[\theta T,T]$ we have
\beqs
\kappa_0 (p_{x_n}-\Psi_{x_n})=v_{x_n}=w_{x_n}\le  \mu\lambda,
\eeqs
which implies
\beq\label{p1}
 p_{x_n}\le \Psi_{x_n} + \kappa_0^{-1} \mu\lambda.
\eeq
Replacing $p$ by $-p$, and $\Psi$ by $-\Psi$, we obtain again \eqref{p1} with $-p_{x_n}$ and $-\Psi_{x_n}$ in place of $p_{x_n}$ and $\Psi_{x_n}$.
Therefore, we have on $\Gamma_{R/2}\times[\theta T,T]$ that
\beqs
| p_{x_n}|\le C(| \Psi_{x_n}|+ \kappa_0^{-1} \mu\lambda).
\eeqs
Combining with the fact  $p_{x_i}=\Psi_{x_i}$ on $\Gamma_{R}\times(0,T]$ for $i<n$, we assert that
\beq\label{gb5}
|\nabla p| \le C(|\nabla \Psi|+ \kappa_0^{-1} \mu\lambda)\text { on } \Gamma_{R/2}\times[\theta T,T].
\eeq
Using \eqref{lamchoice} and \eqref{MM} in \eqref{gb5}, we obtain
\begin{multline}
\max_{\Gamma_{R/2}\times[\theta T,T]}|\nabla p(x,t)| 
\le C(1+R^{-2})e^{C'R}(1+(\theta T)^{-1})^{\mu_0}\exp \big(C'\max_{\bar B_R^+\times [\theta T/2,T]} |\bar p| \big) \\
\cdot(1+\max_{\bar B_R^+ \times [0,T]}|\Psi_t|^\frac2{2-a}+\max_{\bar B_R^+\times [0,T]}|\nabla \Psi|^2+\max_{\bar B_R^+\times [0,T]}|\nabla^2 \Psi|),
\end{multline}
thus proving \eqref{gradp-0}.
\end{proof}

The general domain and boundary are treated in the next theorem.

\begin{theorem} \label{theo51}
Let $p(x,t)$ be a solution of \eqref{p:eq}.
Then for any $T_0> 0$, $T>0$, and $\theta\in(0,1)$, 
\begin{multline}\label{gb}
\max_{\Gamma\times [T_0+\theta T,T_0+T]}|\nabla p(x,t)| 
\le C(1+(\theta T)^{-1})^{\mu_0}\exp \big(C'\max_{\bar U\times [T_0+\theta T/2,T_0+T]} |\bar p| \big) \\
\cdot(1+\max_{\bar U \times [T_0,T_0+T]}|\Psi_t|^\frac2{2-a}+\max_{\bar U\times [T_0,T_0+T]}|\nabla \Psi|^2+\max_{\bar U\times [T_0,T_0+T]}|\nabla^2 \Psi|),
\end{multline}
where $C$, $C'$ are positive numbers independent of $T_0$, $T$, and $\theta$. 
\end{theorem}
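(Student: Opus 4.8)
The plan is to deduce Theorem~\ref{theo51} from Proposition~\ref{prop5} by flattening $\Gamma$ locally, transforming \eqref{eq4p} into an equation of type \eqref{ee1}, and patching finitely many local estimates. Since $\Gamma$ is compact and of class $C^2$, for each $x_0\in\Gamma$ there is a neighborhood $V$ of $x_0$ and a $C^2$-diffeomorphism $\Phi\colon V\to\Phi(V)\subset\R^n$ with $\Phi(x_0)=0$, $\Phi(V\cap U)\subset\{y_n>0\}$, $\Phi(V\cap\Gamma)\subset\{y_n=0\}$, and with $D\Phi$, $D\Phi^{-1}$, $D^2\Phi$, $D^2\Phi^{-1}$ bounded on the relevant closed sets; shrinking $V$ we may assume $\bar B_R^+\subset\Phi(V\cap\bar U)$ and $\Gamma_R\subset\Phi(V\cap\Gamma)$ for some $R>0$. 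Put $\hat p(y,t)=p(\Phi^{-1}(y),t+T_0)$, $\hat\Psi(y,t)=\Psi(\Phi^{-1}(y),t+T_0)$, and $\hat{\bar p}=\hat p-\hat\Psi$. By the chain rule $\hat p$ solves \eqref{ee1} on $B_R^+\times(0,T]$ with
\[
\tilde{\mathcal A}_{kl}=\sum_{i,j=1}^n \mathcal A_{ij}\,(\partial_{x_i}\Phi_k)(\partial_{x_j}\Phi_l),\qquad
\tilde b_k=-\sum_{i,j=1}^n \mathcal A_{ij}\,\partial_{x_i}\partial_{x_j}\Phi_k ,
\]
where $\mathcal A_{ij}$ and the derivatives of $\Phi$ are evaluated at $x=\Phi^{-1}(y)$ and time $t+T_0$.

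Next I would verify the structural hypotheses of Proposition~\ref{prop5}. Since $|\nabla_x p|$ and $|\nabla_y\hat p|$ are comparable, with constants depending only on $\sup\|D\Phi^{\pm1}\|$, and $\xi\mapsto(1+\xi)^{-a}$ is decreasing and almost multiplicative under such comparisons, one gets $\tilde K(|\nabla_y\hat p|)\le C\,\tilde K(|\nabla_x p|)\le C'\,\tilde K(|\nabla_y\hat p|)$. Combining this with \eqref{Abound} and the boundedness of $D\Phi$, $D^2\Phi$ yields \eqref{hyp1} and \eqref{hyp3}; the ellipticity \eqref{Pos} follows from $\sum_{k,l}\tilde{\mathcal A}_{kl}y_ky_l=\sum_{i,j}\mathcal A_{ij}\eta_i\eta_j$ with $\eta_i=\sum_k(\partial_{x_i}\Phi_k)y_k$, together with \eqref{PSD} and the fact that $|\eta|$ is comparable to $|y|$. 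The term $\tilde b_k$, which contains the second derivatives of the flattening map, is precisely what forces $\Gamma\in C^2$ and makes the nonhomogeneous hypothesis \eqref{hyp3} (rather than a purely homogeneous structure) the natural one in Proposition~\ref{prop5}. By the same chain-rule computations, $|\hat\Psi_t|=|\Psi_t|$, $|\nabla_y\hat\Psi|\le C|\nabla_x\Psi|$, and $|\nabla_y^2\hat\Psi|\le C(|\nabla_x^2\Psi|+|\nabla_x\Psi|)\le C(1+|\nabla_x^2\Psi|+|\nabla_x\Psi|^2)$ on the pulled-back sets, while
\[
\max_{\bar B_R^+\times[\theta T/2,T]}|\hat{\bar p}|
=\max_{\Phi^{-1}(\bar B_R^+)\times[T_0+\theta T/2,\,T_0+T]}|\bar p|
\le \max_{\bar U\times[T_0+\theta T/2,\,T_0+T]}|\bar p|.
\]

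Now I would apply Proposition~\ref{prop5} to $\hat p$ on $B_R^+\times(0,T]$ and transform back. Because $|\nabla_x p|\le C|\nabla_y\hat p|$, estimate \eqref{gradp-0} for $\nabla_y\hat p$ on $\Gamma_{R/2}\times[\theta T,T]$ gives a bound for $|\nabla_x p|$ on $\Phi^{-1}(\Gamma_{R/2})\times[T_0+\theta T,\,T_0+T]$, in which the factor $(1+R^{-2})e^{C'R}$ is a fixed constant, the data terms are controlled (up to constants) by $\max|\Psi_t|^{\mu_0}$, $\max|\nabla\Psi|^2$, $\max|\nabla^2\Psi|$ over $\bar U\times[T_0,T_0+T]$, and the exponential factor is at most $\exp\!\big(C'\max_{\bar U\times[T_0+\theta T/2,\,T_0+T]}|\bar p|\big)$. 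Finally, cover the compact set $\Gamma$ by finitely many open sets $\Phi_m^{-1}(\Gamma_{R_m/2})$, $m=1,\dots,N$, arising from such charts; taking the maximum of the corresponding local estimates, with $C$ and $C'$ the largest of the finitely many chart constants (which depend only on $U$, $g$, $n$, etc.), yields \eqref{gb}. The only genuinely delicate step is checking that the flattened equation retains the structure \eqref{hyp1}--\eqref{hyp3} with a weight $\tilde K$ of the same type; the remainder is the standard localize-and-patch argument, and the time shift $t\mapsto t+T_0$ is harmless since the coefficients in \eqref{ee1} are merely given functions of $(x,t)$.
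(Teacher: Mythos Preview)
Your proposal is correct and follows essentially the same approach as the paper: flatten $\Gamma$ locally by a $C^2$ chart, verify that the transformed equation satisfies the structural hypotheses \eqref{hyp1}--\eqref{hyp3} via the comparability of $|\nabla_x p|$ and $|\nabla_y\hat p|$ together with \eqref{Abound}--\eqref{PSD}, apply Proposition~\ref{prop5}, pull the estimate back, and patch using a finite cover of the compact boundary. The only cosmetic difference is that the paper reduces to $T_0=0$ at the outset while you carry the time shift through explicitly; both are equivalent.
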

\begin{proof}
By replacing $p(x,t)$ with $p(x,T_0+t)$, we can assume, without loss of generality, that $T_0=0$ and $p\in C^{2,1}(\bar U\times [0,T])$.
Let  $x_0\in \partial U$. There exist an open neighborhood $V$ of $x_0$, a radius $R>0$ and $C^{2}$-bijections $y=\Phi(x):U\cap V\to B_{2R}^+$ and $x=\Upsilon(y):B_{2R}^+\to  U\cap V$ such that $\Phi=\Upsilon^{-1}$,
\beq \label{ig}
\Phi( \Gamma\cap V)=B_{2R}\cap \{y_n=0\},
\eeq   and 
\beq \label{Ctwo}
\|\Phi\|_{C^2(U\cap V)},\|\Upsilon\|_{C^2(B_{2R}^+)}\le c_0 \quad \text{for some }c_0>0.
\eeq

Define $\tilde p(y)=p ( \Upsilon (y) ) $ and $\tilde \Psi(y)=\Psi(\Upsilon(y))$. 
We use the fact that $p(x,t)$ is a solution of \eqref{eq4p}.
Simple calculations give 
  \beqs
 p_{x_i} = \sum_{k=1}^n \tilde p_{y_k}\Phi_{x_i}^{k}, \quad   p_{x_ix_j} = \sum_{k,l=1}^n \tilde p_{y_ky_l}\Phi_{x_i}^{k}\Phi_{x_j}^{l} +  \sum_{k=1}^n \tilde p_{y_k}\Phi_{x_ix_j}^{k}
\eeqs
with $\Phi=(\Phi^k)_{k=1,\ldots,n}.$ Thus, 
\beqs
p_t - \sum_{i,j=1}^n {\mathcal  A}_{ij}p_{x_xx_j}
 = \tilde p_t - \sum_{k,l=1}^n \tilde {\mathcal  A}_{kl} \tilde p_{y_k y_l}  +\sum_{k=1}^n \tilde b_k \tilde p_{y_k} 
 \eqdef {\mathcal L} \tilde p,
\eeqs
where 
\beqs
\tilde {\mathcal  A}_{kl} = \sum_{i,j=1}^n\mathcal A_{ij}\Phi_{x_i}^{k}\Phi_{x_j}^{l}, \quad \tilde b_k = -\sum_{i,j=1}^n  \mathcal A_{ij} \Phi^k_{x_ix_j}.  
\eeqs
Therefore, $\tilde p(y,t)$ satisfies the equation
\beq\label{tileq}
\mathcal L \tilde p(y,t)=0 \text{ on } B_R^+\times(0,T),\quad \tilde p(y,t)=\tilde \Psi(y,t) \text{ for } y_n=0.
\eeq

Now we check the conditions \eqref{hyp1}, \eqref{Pos} and \eqref{hyp3}. 
By chain rule and \eqref{Ctwo}, 
\beq\label{echange}
\begin{aligned}
&|\nabla_y \tilde p(y,t)| \le  \tilde c_0  |\nabla p(x,t)|,\quad |\nabla p(x,t)|\le  \tilde c_0  |\nabla_y \tilde p(y,t)|,\\
&|\nabla_y^2 \tilde \Psi|\le  \tilde c_0 (|\nabla^2 \Psi|+|\nabla \Psi|),\quad
|\nabla^2 \Psi|\le  \tilde c_0  (|\nabla_y^2 \tilde \Psi|+|\nabla_y \tilde \Psi|),
\end{aligned}
\eeq
for some $ \tilde c_0 \ge 1$.
We have for any $k,l=1,2,\ldots,n$ that
\begin{align*}
|\tilde {\mathcal  A}_{kl}| 
&= |\sum_{i,j=1}^n\mathcal A_{ij}\Phi_{x_i}^{k}\Phi_{x_j}^{l}| \le n^2\tilde c_0^2\max_{1\le i,j\le n}|\mathcal A_{ij}| 
\le  n^2 \tilde c_0^2(1+a)d_2 (1+|\nabla p|)^{-a} \\
&\le  n^2 \tilde c_0^2(1+a)d_2 (1+\tilde c_0^{-1}|\nabla_y \tilde p|)^{-a}
\le   n^2 \tilde c_0^2(1+a)d_2 \tilde c_0^a(1+|\nabla_y \tilde p|)^{-a}.
\end{align*}
This yields \eqref{hyp1}.
For all $\xi\in \mathbb R^n,$
\beq\label{B1}
\sum_{k,l=1}^n \tilde {\mathcal  A}_{kl} \xi_k\xi_l = \sum_{k,l=1}^n  \sum_{i,j=1}^n\mathcal A_{ij}\Phi_{x_i}^{k}\Phi_{x_j}^{l}\xi_k\xi_l =   \sum_{i,j=1}^n\mathcal A_{ij}\eta_i\eta_j  
\eeq
with $\eta =  ( D\Phi)\xi$, i.e.,  $\xi =(D\Upsilon)\eta$. 
By \eqref{PSD} we have  
\beq\label{B2}
(1-a)d_1(1+|\nabla p|)^{-a}|\eta|^2\le \sum_{i,j=1}^n \mathcal{A}_{ij}\eta_i\eta_j\le d_2(1+|\nabla p|)^{-a}|\eta|^2.
\eeq 
Note that  $\tilde c_0^{-1}|\xi| \le |\eta|\le \tilde c_0|\xi|$.
It follows from \eqref{B2} that
\beqs
\sum_{k,l=1}^n \tilde{\mathcal{A}}_{kl}\xi_k\xi_l
\le \tilde c_0^2 d_2(1+\tilde c_0^{-1}|\nabla_y\tilde p|)^{-a}|\xi|^2
\le \tilde c_0^2 d_2\tilde c_0^a(1+|\nabla_y\tilde p|)^{-a}|\xi|^2,
\eeqs
\beqs
\sum_{k,l=1}^n \tilde{\mathcal{A}}_{kl}\xi_k\xi_l
\ge (1-a) \tilde c_0^{-2}d_1(1+\tilde c_0|\nabla_y \tilde p|)^{-a} |\xi|^2\\
\ge (1-a) \tilde c_0^{-2}d_1\tilde c_0^{-a}(1+|\nabla_y \tilde p|)^{-a} |\xi|^2 ,
\eeqs
hence \eqref{Pos} is satisfied.
Next, we bound $ |\tilde b_k|$ for each $k=1,2,\ldots,n$ by
 \begin{align*}
|\tilde b_k| 
&\le \sum_{i,j=1}^n  |\mathcal A_{ij} \Phi^k_{x_ix_j}|
\le c_0 n^2 (1+a) d_2(1+|\nabla p|)^{-a} \\
&\le c_0 n^2(1+a) d_2(1+\tilde c_0^{-1}|\nabla_y\tilde p|)^{-a} 
\le c_0 n^2(1+a) d_2\tilde c_0^a (1+|\nabla_y\tilde p|)^{-a}.
\end{align*}
Hence we have \eqref{hyp3}.

Applying estimate \eqref{gradp-0}  from Proposition \ref{prop5} to $\tilde p$, $\tilde \Psi$ and equation \eqref{tileq}, we obtain
\begin{multline}\label{gradp-2}
\max_{\Gamma_{R/2}\times [\theta T,T]}|\nabla_y\tilde p(y,t)| 
\le \tilde C_R(1+(\theta T)^{-1})^{\mu_0}\exp \big(C'\max_{\bar B_R^+\times [\theta T/2,T]} |\tilde p-\tilde \Psi| \big) \\
\cdot(1+\max_{\bar B_R^+ \times [0,T]}|\tilde \Psi_t|^{\mu_0}+\max_{\bar B_R^+\times [0,T]}|\nabla_y \tilde \Psi|^2+\max_{\bar B_R^+\times [0,T]}|\nabla_y^2 \tilde \Psi|).
\end{multline}
Let $\rho>0$ sufficiently  small such that $U\cap B_\rho(x_0)\subset \Upsilon(B_{R/2}^+)$.
Thanks to \eqref{ig}, \eqref{gradp-2} and \eqref{echange}, it follows  that
\begin{multline}
\max_{(\Gamma\cap \bar B_\rho(x_0))\times [\theta T,T]}|\nabla p(x,t)| 
\le C_R(1+(\theta T)^{-1})^{\mu_0}\exp \big(C'\max_{\bar U\times [\theta T/2,T]} |\bar p| \big) \\
\cdot(1+\max_{\bar U \times [0,T]}|\Psi_t|^{\mu_0}+\max_{\bar U\times [0,T]}|\nabla \Psi|^2+\max_{\bar U\times [0,T]}(|\nabla^2 \Psi|+|\nabla \Psi|)).
\end{multline} 
By Cauchy's inequality for the last $|\nabla \Psi|$, we obtain
\begin{multline}\label{gradp-3}
\max_{(\Gamma\cap \bar B_\rho(x_0))\times [\theta T,T]}|\nabla p(x,t)| 
\le C_R(1+(\theta T)^{-1})^{\mu_0}\exp \big(C'\max_{\bar U\times [\theta T/2,T]} |\bar p| \big) \\
\cdot(1+\max_{\bar U \times [0,T]}|\Psi_t|^{\mu_0}+\max_{\bar U\times [0,T]}|\nabla \Psi|^2+\max_{\bar U\times [0,T]}|\nabla^2 \Psi|).
\end{multline} 
By using a finite open covering of $\Gamma$, we obtain the desired estimate \eqref{gb} from \eqref{gradp-3}.
The proof is complete.
\end{proof}

The bounds of $\|\nabla p(t)\|_{L^\infty(\Gamma)}$, in fact, can be expressed in terms of the initial and boundary data as follows.

\begin{corollary}\label{Cor5}
Let $p(x,t)$ be a solution of \eqref{p:eq}, and let $\alpha$ satisfy \eqref{alcond}.

{\rm (i)} If $0<t\le 3$ then
\begin{multline}\label{b1}
\|\nabla p(t)\|_{L^\infty(\Gamma)}
\le C t^{-\mu_0}(1+\max_{[t/4,t]} (\|\Psi_t\|_{L^\infty}^{\mu_0}+\|\nabla \Psi\|_{L^\infty}^2+\|\nabla^2\Psi\|_{L^\infty})) \\
\cdot \exp\Big\{C' t^{-\frac{1}{\delta_1}}(1 + \|\bar{p}_0\|_{L^\alpha(U)})^{z_3}(1 + [ Env A(\alpha,t)]^\frac1{\alpha-a})^{z_3}(1+\|\nabla \Psi\|_{L^{\alpha q_1}(U\times(0,t))}+\|\Psi_t\|_{L^{\alpha q_1}(U\times(0,t))})^{z_2}\Big\}.
\end{multline}

If $t> 1$ then
\begin{multline}\label{b2}
\|\nabla p(t)\|_{L^\infty(\Gamma)}
\le C (1+\max_{[t-1,t]} (\|\Psi_t\|_{L^\infty}^{\mu_0}+\|\nabla \Psi\|_{L^\infty}^2+\|\nabla^2\Psi\|_{L^\infty})) \\
\cdot \exp\Big\{C' (1 + \|\bar{p}_0\|_{L^\alpha(U)})^{z_3} (1+ [ Env A(\alpha,t)]^\frac1{\alpha-a})^{z_3}(1+\|\nabla \Psi\|_{L^{\alpha q_1}(U\times(t-1,t))}+\|\Psi_t\|_{L^{\alpha q_1}(U\times(t-1,t))})^{z_2}\Big\}.
\end{multline}

{\rm (ii)} If $A(\alpha)<\infty$ then
\begin{multline}\label{b3}
 \limsup_{t\to\infty} \|\nabla p(t)\|_{L^\infty(\Gamma)}
\le C (1+ \limsup_{t\to\infty}  (\|\Psi_t\|_{L^\infty}^{\mu_0}+\|\nabla \Psi\|_{L^\infty}^2+\|\nabla^2\Psi\|_{L^\infty})) \\
\cdot \exp\Big\{C' (1+   A(\alpha)^\frac1{\alpha-a})^{z_3}(1+\limsup_{t\to\infty}( \|\nabla \Psi(t)\|_{L^{\alpha q_1}( U\times (t-1,t))}+ \|\Psi_t(t)\|_{L^{\alpha q_1}( U\times (t-1,t))}))^{z_2}\Big\}.
\end{multline}

{\rm (iii)} If $\beta(\alpha)<\infty$ then there is $T>0$ such that for all $t\ge T$
\begin{multline}\label{b4}
\|\nabla p(t)\|_{L^\infty(\Gamma)}
\le C (1+\max_{[t-1,t]} (\|\Psi_t\|_{L^\infty}^{\mu_0}+\|\nabla \Psi\|_{L^\infty}^2+\|\nabla^2\Psi\|_{L^\infty})) \\
\cdot \exp\Big\{C' (1+  \beta(\alpha)^\frac1{\alpha-2a} + \|A(\alpha,\cdot)\|_{L^\frac\alpha{\alpha-a}(t-1,t)}^\frac1{\alpha-a} )^{z_3}( 1 +\|\nabla \Psi\|_{L^{\alpha q_1}( U\times (t-1,t))}+\|\Psi_t\|_{L^{\alpha q_1}( U\times (t-1,t))})^{z_2}\Big\}.
\end{multline}
\end{corollary}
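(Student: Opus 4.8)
The plan is to combine the boundary gradient bound of Theorem~\ref{theo51} with the global $L^\infty$-estimates for $\bar p$ from Theorem~\ref{Ntheo43}. Theorem~\ref{theo51} says, roughly, that for a time window $[T_0,T_0+T]$ (with $T_0>0$) and $\theta\in(0,1)$, the quantity $\max_{\Gamma\times[T_0+\theta T,T_0+T]}|\nabla p|$ is controlled by $C(1+(\theta T)^{-1})^{\mu_0}$, times the maxima of $|\Psi_t|^{\mu_0}$, $|\nabla\Psi|^2$, $|\nabla^2\Psi|$ over $\bar U\times[T_0,T_0+T]$, times $\exp\big(C'\sup_{[T_0+\theta T/2,\,T_0+T]}\|\bar p(\cdot)\|_{L^\infty(U)}\big)$. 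So in every case it suffices to (a) pick the right window ending at (or containing) $t$ and (b) replace the exponent $\sup\|\bar p(\cdot)\|_{L^\infty(U)}$ by the corresponding data bound from Theorem~\ref{Ntheo43}, using that $\exp$ is increasing.

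\textbf{Part (i).} For $0<t\le 3$, apply Theorem~\ref{theo51} with $T_0=t/4$, $T=3t/4$ and a fixed small $\theta$, so $t\in[T_0+\theta T,T_0+T]$, the prefactor $(1+(\theta T)^{-1})^{\mu_0}\le C\,t^{-\mu_0}$, and the $\Psi$-maxima are over $\bar U\times[t/4,t]$. This reduces \eqref{b1} to bounding $\sup_{s\in[T_0+\theta T/2,\,t]}\|\bar p(s)\|_{L^\infty(U)}$; inserting \eqref{Ne2} at each such $s$ and using $s^{-1/\delta_1}\le C\,t^{-1/\delta_1}$, the monotonicity of $Env\,A(\alpha,\cdot)$, and $U\times(0,s)\subset U\times(0,t)$, together with the trivial rewriting $(1+x+y)^{z_3}\le(1+x)^{z_3}(1+y)^{z_3}$, yields \eqref{b1}. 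For large $t$ one repeats this with $T_0=t-1$, $T=1$, $\theta=1/2$: now the $\Psi$-maxima sit on $\bar U\times[t-1,t]$ and the prefactors are bounded constants, so \eqref{Nsmall} (whose $\Psi$-windows $U\times(s-1,s)$, $s\le t$, lie in $U\times(t-2,t)$ and are absorbed into $U\times(t-1,t)$ after a harmless enlargement of constants) gives \eqref{b2}; the short overlap range $1<t<2$ is already covered by the first argument.

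\textbf{Parts (ii) and (iii).} For (ii), take $\limsup_{t\to\infty}$ in \eqref{b2}. Since $A(\alpha)<\infty$, the exponent stays bounded (via \eqref{NlimI}), and because $\exp$ is continuous and increasing the $\limsup$ passes through it; replacing the $\bar p$-factor by \eqref{NlimI} and the $\Psi$-factor by its $\limsup$ gives \eqref{b3}. For (iii) one again uses Theorem~\ref{theo51} with $T_0=t-1$, $T=1$, $\theta=1/2$ and estimates $\sup_{[t-3/4,t]}\|\bar p(\cdot)\|_{L^\infty(U)}$ by \eqref{NPbarI}, which is valid for all $t\ge T$ with $T$ as in Theorem~\ref{Ntheo43}(iii); this produces \eqref{b4}.

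\textbf{Main obstacle.} There is no genuinely hard step once Theorems~\ref{theo51} and~\ref{Ntheo43} are in hand: the work is essentially bookkeeping. The one thing to watch is keeping the various time windows — those on which $\bar p$, $p_0$, $\Psi$, $\Psi_t$, $\nabla^2\Psi$ are measured — nested as required, and converting the pointwise-in-$t$ bounds \eqref{Ne2}, \eqref{Nsmall}, \eqref{NPbarI} into the supremum-in-$t$ quantity feeding the exponential in Theorem~\ref{theo51}; the monotonicity built into $Env\,A(\alpha,\cdot)$ is precisely what makes this passage costless, and in (ii)--(iii) the only extra remark is that boundedness of the exponent lets $\limsup$ (resp. eventual-in-time validity) commute with $\exp$.
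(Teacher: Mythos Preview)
Your proposal is correct and follows essentially the same route as the paper: apply Theorem~\ref{theo51} on a window ending at $t$ (with $T_0=t/4$, $T=3t/4$ for small $t$, and $T_0=t-1$, $T=1$, $\theta=1/2$ for large $t$), then feed the resulting $\exp\big(C'\sup\|\bar p\|_{L^\infty}\big)$ factor with the appropriate part of Theorem~\ref{Ntheo43}. One wording point: in part~(ii) you say ``take $\limsup$ in \eqref{b2}'', but \eqref{b2} already contains $\|\bar p_0\|_{L^\alpha}$, which does not disappear under $\limsup$; what you (and the paper) actually do is take $\limsup$ in the intermediate inequality $\|\nabla p(t)\|_{L^\infty(\Gamma)}\le C(\cdots)\exp\{C'\max_{[t-3/4,t]}\|\bar p\|_{L^\infty}\}$ and then invoke \eqref{NlimI} to bound $\limsup\|\bar p\|_{L^\infty}$ --- this is clearly what your phrase ``replacing the $\bar p$-factor by \eqref{NlimI}'' intends.
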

\begin{proof}
{\rm (i)} For $0<t\le 3$, applying Theorem \ref{theo51} to $T_0=t/4$, $T=3t/4$, $\theta=1/3$ and using estimate \eqref{Ne2} we have
\begin{multline*}
\|\nabla p(t)\|_{L^\infty(\Gamma)}
\le C t^{-\mu_0}(1+\max_{[t/4,t]} (\|\Psi_t\|_{L^\infty}^{\mu_0}+\|\nabla \Psi\|_{L^\infty}^2+\|\nabla^2\Psi\|_{L^\infty})) \\
\cdot \exp\Big\{C' t^{-\frac{1}{\delta_1}}(1 + \|\bar{p}_0\|_{L^\alpha(U)} + [ Env A(\alpha,t)]^\frac1{\alpha-a})^{z_3}(1+\|\nabla \Psi\|_{L^{\alpha q_1}(U\times(0,t))}+\|\Psi_t\|_{L^{\alpha q_1}(U\times(0,t))})^{z_2}\Big\}.
\end{multline*}
Then \eqref{b1} follows.

When $t> 1$, applying Theorem \ref{theo51} to $T_0=t-1$, $T=1$, $\theta=1/2$ gives
\beq\label{g1}
\|\nabla p(t)\|_{L^\infty(\Gamma)}
\le C (1+\max_{[t-1,t]} (\|\Psi_t\|_{L^\infty}^{\mu_0}+\|\nabla \Psi\|_{L^\infty}^2+\|\nabla^2\Psi\|_{L^\infty})) 
\cdot \exp\Big\{C'\max_{[t-3/4,t]}\|\bar p\|_{L^\infty}\Big\}.
\eeq
Using estimate \eqref{Nsmall} we have
\begin{multline*}
\|\nabla p(t)\|_{L^\infty(\Gamma)}
\le C (1+\max_{[t-1,t]} (\|\Psi_t\|_{L^\infty}^{\mu_0}+\|\nabla \Psi\|_{L^\infty}^2+\|\nabla^2\Psi\|_{L^\infty})) \\
\cdot \exp\Big\{C' (1 + \|\bar{p}_0\|_{L^\alpha(U)} + [ Env A(\alpha,t)]^\frac1{\alpha-a})^{z_3}(1+\|\nabla \Psi\|_{L^{\alpha q_1}(U\times(t-1,t))}+\|\Psi_t\|_{L^{\alpha q_1}(U\times(t-1,t))})^{z_2}\Big\}.
\end{multline*}
Then \eqref{b2} follows.

{\rm (ii)} If $ A(\alpha)<\infty$ then taking  limit superior of \eqref{g1} and using \eqref{NlimI} yield
\begin{multline*}
 \limsup_{t\to\infty} \|\nabla p(t)\|_{L^\infty(\Gamma)}
\le C (1+ \limsup_{t\to\infty}  (\|\Psi_t\|_{L^\infty}^{\mu_0}+\|\nabla \Psi\|_{L^\infty}^2+\|\nabla^2\Psi\|_{L^\infty})) \\
\cdot \exp\Big\{C' (1+   A(\alpha)^\frac1{\alpha-a})^{z_3}(1+\limsup_{t\to\infty} (\|\nabla \Psi(t)\|_{L^{\alpha q_1}( U\times (t-1,t))}+\|\Psi_t(t)\|_{L^{\alpha q_1}( U\times (t-1,t))}))^{z_2}\Big\}.
\end{multline*}
Thus we obtain \eqref{b3}.

{\rm (iii)} Using \eqref{NPbarI} in \eqref{g1} gives
\begin{multline*}
\|\nabla p(t)\|_{L^\infty(\Gamma)}
\le C (1+\max_{[t-1,t]} (\|\Psi_t\|_{L^\infty}^{\mu_0}+\|\nabla \Psi\|_{L^\infty}^2+\|\nabla^2\Psi\|_{L^\infty})) \\
\cdot \exp\Big\{C' (1+  \beta(\alpha)^\frac1{\alpha-2a} + \|A(\alpha,\cdot)\|_{L^\frac\alpha{\alpha-a}(t-1,t)}^\frac1{\alpha-a} )^{z_3}( 1 +\|\nabla \Psi\|_{L^{\alpha q_1}( U\times (t-1,t))}+\|\Psi_t\|_{L^{\alpha q_1}( U\times (t-1,t))})^{z_2}\Big\}.
\end{multline*}
 for all $t\ge T$ with some $T>0$. This proves \eqref{b4}.
\end{proof}

\section{$L^s$-estimates for the gradient}
\label{PGradS}

In this section, we estimate the pressure gradient in $L^s$-norm for any $0<s<\infty$.
Throughout this section, $p(x,t)$ is a solution of \eqref{p:eq}.
First, we establish the basic step for the Ladyzhenskaya-Uraltseva iteration.

\begin{lemma} \label{new.it-L} Let $s> 0$, $T_0\ge 0$, and $T> T'\ge 0$.
Define 
\beqs M_b=\max_{\Gamma\times[T_0+T',T_0+T]}|\nabla p|\quad\text{and}
\quad v=\max\{|\nabla p|^2-M_b^2,0\}.
\eeqs

Let $\zeta(t)$ be a smooth cut-off function on $[T_0,T_0+T]$ with $\zeta=0$ on $[T_0,T_0+T']$.
Then
\beq\label{eLs2}
\sup_{[T_0,T_0+T]}  \int_U v^{s+1}(x,t) \zeta dx
+\int_{T_0}^T\int_U K(|\nabla p|) |\nabla^2 p|^2   v^s  \zeta dx dt
\le  C  \int_{T_0}^T \int_U v^{s+1}|\zeta_t| dx dt.
\eeq
\end{lemma}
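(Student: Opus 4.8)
The plan is to test the non-divergence form equation \eqref{eq4p} against a suitable multiple of $v^s\zeta$, where $v=\max\{|\nabla p|^2-M_b^2,0\}$, and integrate over $U$. The natural identity to exploit is that on the cylinder in question, $v$ vanishes on the lateral boundary $\Gamma\times(T_0+T',T_0+T)$ (since $|\nabla p|\le M_b$ there by definition of $M_b$), and $\zeta$ vanishes at the initial time slice $t=T_0+T'$; this lets us integrate by parts in $x$ and in $t$ with no unwanted boundary contributions. First I would write $\partial_t(v^{s+1})$ in terms of $v^s\nabla p\cdot\nabla p_t$ on the set $\{v>0\}$, and commute $\partial_t$ with $\partial_{x_ix_j}$ to turn $\partial_t|\nabla p|^2$ into $2\sum\mathcal A_{ij}p_{x_ix_j}$-type terms after using \eqref{eq4p}. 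Concretely, differentiating \eqref{eq4p} in $x_k$ and contracting with $p_{x_k}$ produces an equation for $|\nabla p|^2$ of the form $\partial_t(|\nabla p|^2/2)=\sum_{i,j}\partial_{x_i}\big(\mathcal A_{ij}\partial_{x_j}(|\nabla p|^2/2)\big)-\sum\mathcal A_{ij}p_{x_ix_k}p_{x_jx_k}+(\text{lower order})$, where the quadratic Hessian term is the ``good'' coercive term.

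The key steps, in order: (1) Derive the differential inequality for $w:=|\nabla p|^2$, isolating the coercive term $\sum_{i,j,k}\mathcal A_{ij}p_{x_ix_k}p_{x_jx_k}$, which by \eqref{PSD} is bounded below by $(1-a)d_1(1+|\nabla p|)^{-a}|\nabla^2 p|^2\geq c\,K(|\nabla p|)|\nabla^2 p|^2$ using \eqref{K-est-3}. (2) Multiply by $v^s\zeta$ and integrate over $U\times(T_0,T_0+t)$; integrate by parts in space on the elliptic term, noting $\nabla v = \nabla w$ on $\{v>0\}$ and that $v^s\zeta$ vanishes on $\Gamma$, which yields $+\,s\int \mathcal A_{ij}\partial_{x_i}v\,\partial_{x_j}v\,v^{s-1}\zeta\geq 0$ (a term we simply discard) together with $-\int(\text{coercive})v^s\zeta$ moved to the left. (3) Integrate the time term: $\int_U v^s\zeta\,\partial_t w\,dx = \tfrac{1}{s+1}\frac{d}{dt}\int_U v^{s+1}\zeta\,dx-\tfrac{1}{s+1}\int_U v^{s+1}\zeta_t\,dx$, since $v^s\partial_t w = v^s\partial_t v = \tfrac{1}{s+1}\partial_t(v^{s+1})$ on $\{v>0\}$ and $=0$ elsewhere. (4) Bound the remaining first-order and lower-order terms (those involving $K'$, $|\nabla p|\,|\nabla^2 p|$ without the full Hessian square, and terms proportional to $M_b^2$ arising because $w = v+M_b^2$ on $\{v>0\}$) by $\varepsilon K(|\nabla p|)|\nabla^2 p|^2 v^s\zeta$ plus $C_\varepsilon$ times integrable quantities, using Cauchy's inequality and the bounds $|\mathcal A_{ij}|\le(1+a)K$, $|K'(\xi)\xi|\le aK$; absorb the $\varepsilon$-terms into the left-hand side. (5) Take the supremum over $t\in[T_0,T_0+T]$ and combine.

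The main obstacle I anticipate is \textbf{step (4)}: the error terms generated when one replaces $|\nabla p|^2$ by $v+M_b^2$ and when one handles the first-order term $\sum\mathcal A_{ij}p_{x_ix_j}$ appearing linearly (not as a square) are not obviously controlled by $\int v^{s+1}|\zeta_t|$ alone — in particular a term like $CM_b^2\int K(|\nabla p|)|\nabla^2 p|^2 v^{s-1}\zeta$ and a term $CM_b^4\int K(|\nabla p|)v^{s-1}\zeta$ threaten to appear, exactly the right-hand side structure of Lemma~\ref{LUK}. I expect the resolution is to invoke Lemma~\ref{LUK} (applied slicewise in $t$, with the role of $k$ played by a value realizing $\max|w-k|$, which is itself controlled since $v$ is bounded on the support where $\zeta\neq0$... though more likely the lemma is used later and here one argues more directly) — or, more plausibly for this \emph{basic step}, to observe that on $\mathrm{supp}\,\zeta$ one has good control and the $M_b$-terms telescope into the $v^{s+1}$ term via Young's inequality since $v^{s-1}M_b^4\le \varepsilon v^{s+1}+C_\varepsilon M_b^{2(s+1)}$-type bounds fail for $M_b$ large, so instead one keeps $M_b^2$ attached to $|\nabla^2 p|^2$ and recognizes $v^{s-1}(v+M_b^2)=v^s+M_b^2v^{s-1}$, then uses $K(|\nabla p|)|\nabla^2p|^2v^{s-1}M_b^2 \le \varepsilon K|\nabla^2 p|^2 v^s + C_\varepsilon K|\nabla^2p|^2 M_b^4 v^{s-2}$— which still does not close. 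The honest expectation is therefore that the proof must be organized so that \emph{every} term except $\int v^{s+1}|\zeta_t|$ is either coercive (kept on the left) or absorbed, and the delicate bookkeeping making this work — precisely because $\nabla v=0$ where $|\nabla p|\le M_b$, so $v^{s-1}$-weighted terms only live where $v>0$, i.e. $|\nabla p|>M_b$, hence $M_b^2\le |\nabla p|^2 = v+M_b^2\le 2v$ there, giving $M_b^2 v^{s-1}\le 2v^s$ — is the crux. This last inequality, $M_b^2\le |\nabla p|^2$ on $\{v>0\}$, is what converts all the $M_b$-weighted error terms into genuinely $v^s$-weighted (hence absorbable) ones, and I would make it the first observation of the proof.
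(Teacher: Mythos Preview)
Your final ``crux'' observation is incorrect: on $\{v>0\}$ you only know $|\nabla p|^2>M_b^2$, i.e.\ $v>0$, which does \emph{not} imply $v+M_b^2\le 2v$ (that would require $v\ge M_b^2$, false when $|\nabla p|$ is only slightly above $M_b$). So the mechanism you propose for converting $M_b$-weighted error terms into $v^s$-weighted ones does not work, and your Bernstein-type scheme as written does not close. You correctly diagnosed that steps (1)--(3) of your plan generate terms of the type appearing in Lemma~\ref{LUK}, but that lemma is not what saves the day here; it is invoked only later, in Lemma~\ref{Claimlem}, \emph{after} the present inequality is already available.

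The paper avoids all of this by choosing a different test function: rather than differentiating \eqref{eq4p} and multiplying by $v^s\zeta$, it multiplies the \emph{divergence form} equation $p_t=\nabla\cdot(K(|\nabla p|)\nabla p)$ by $-\nabla\cdot(v^s\zeta\,\nabla p)$ and integrates over $U$. After one integration by parts on the time term (using $\nabla p_t\cdot\nabla p=\tfrac12\partial_t|\nabla p|^2$ and $v^s\partial_t|\nabla p|^2=\tfrac{1}{s+1}\partial_t(v^{s+1})$) and expanding the spatial term via the product rule as in \cite[Lemma~3.6]{HKP1}, one obtains exactly two spatial integrals,
\[
I_1=-\sum_{i,j,l}\int_U\partial_{y_l}\big(K(|y|)y_i\big)\Big|_{y=\nabla p}\,\partial_j\partial_l p\,\partial_j\partial_i p\;v^s\zeta\,dx
\quad\text{and}\quad
I_2=-s\sum \int_U(\cdots)v^{s-1}\chi_{\{v>0\}}\zeta\,dx,
\]
and the monotonicity bound $-aK(\xi)\le K'(\xi)\xi\le 0$ forces $I_1\le -(1-a)\int_U K(|\nabla p|)|\nabla^2 p|^2 v^s\zeta\,dx$ and $I_2\le 0$. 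Thus no $M_b$-dependent error terms ever appear, and one gets directly
\[
\frac{1}{2s+2}\frac{d}{dt}\int_U v^{s+1}\zeta\,dx+(1-a)\int_U K(|\nabla p|)|\nabla^2 p|^2 v^s\zeta\,dx\le \frac{1}{2s+2}\int_U v^{s+1}\zeta_t\,dx,
\]
from which \eqref{eLs2} follows by integration in $t$. The key structural point you missed is that testing with a \emph{divergence} of a vector field (rather than a scalar) lets the equation's specific nonlinearity produce sign-definite terms without residue.
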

\begin{proof} Without loss of generality, assume $T_0=0$. 
Note that $v\zeta =0$ on $\Gamma\times [0,T]$.
Denote $\chi=\chi_{\{v>0\}}$.
Multiplying the equation \eqref{p:eq} by 
$-\nabla \cdot (v^s \zeta \nabla p )$, integrating the resultant over $U$ and using integration by parts,
we obtain
\beq\label{eqgrad}
 \frac{1}{2s+2} \ddt \int_U v^{s+1} \zeta dx
= -\sum_{i,j=1}^n\int_U \partial_j (K(|\nabla p|)\partial_i p) \, \partial_i (v^s \partial_j p \zeta) dx +\frac 1{2s+2} \int_U v^{s+1} \zeta_t dx .
\eeq
Using the product rule for the first term in the right hand side of \eqref{eqgrad} (see Lemma 3.6 of \cite{HKP1}) we rewrite above equation as     
\begin{align*}
& \frac{1}{2s+2} \ddt \int_U v^{s+1} \zeta dx
= -\sum_{i,j,l=1}^n\int_U \Big[\partial_{y_l} (K(|y|) y_i)\Big|_{y=\nabla p} \partial_j\partial_l p\Big]\partial_j \partial_i p  v^s  \zeta dx\\
&\quad -s\sum_{i,j=1}^n \sum_{l,m=1}^n\int_U \Big[\partial_{y_l} (K(|y|) y_i)\Big|_{y=\nabla p} \partial_j\partial_l p \Big]\partial_j p   \, (v^{s-1} \partial_i\partial_m p\partial_m p\chi)\, \zeta dx+\frac 1{2s+2} \int_U v^{s+1} \zeta_t dx.
\end{align*}	
We denote the three terms  on the right-hand side by $I_1$, $I_2$,  and $I_3$. 
It follows from the calculations in Lemma 3.6 of \cite{HKP1} that  
\begin{align*}
 I_1&\le -(1-a)\sum_{j=1}^n \int_U K(|\nabla p|) |\nabla(\partial_j p)|^2   v^s  \zeta dx,\\
 I_2&\le  -(1-a)s\int_U K(| \nabla p|)\Big | \nabla \Big(\frac12|\nabla p|^2\Big)\Big |^2  v^{s-1} \chi  \zeta dx\le 0.
\end{align*}
Combining these estimates, we find that 
\beq\label{gradid}
\frac{1}{2s+2} \ddt \int_U v^{s+1} \zeta dx
+(1-a)  \int_U K(|\nabla p|) |\nabla^2 p|^2   v^s  \zeta dx
\le  \frac 1{2s+2} \int_U v^{s+1}\zeta_t dx.
\eeq
Inequality \eqref{eLs2} follows directly by integrating \eqref{gradid} from $0$ to $T$.
\end{proof}

In the following proposition, we iterate the inequality in Lemma \ref{new.it-L} in order to estimate $W^{1,s}$-norm of $p$ in term of its $W^{1,2-a}$ and $L^\infty$ norms.

\begin{lemma}\label{Claimlem}
Let $s\ge 1$, $T_0\ge 0$, $T>0$, and $0<\theta'<\theta<1$. Define
\beqs
M_b =\max_{[T_0+\theta' T,T_0+T]}\|\nabla p\|_{L^\infty(\Gamma)}\quad \text{and}\quad v=\max\{|\nabla p|^2-M_b^2,0\}.
\eeqs
Then
\beq\label{new2}
\int_{T_0+\theta T}^{T_0+T} \int_{U} K(|\nabla p|) v^{s} dx dt 
\le C T \mathcal U(s) +Cd_0^\frac{s-1}{2-a} \int_{T_0+\theta' T}^{T_0+T} \int_{U} (1+|\nabla p|^{2-a}) dxdt,
\eeq
where constant $C>0$ is independent of $T_0$, $T$, $\theta$, and $\theta'$,
\beq\label{Udef}
\mathcal U(s)=\begin{cases}
      0,&\text{if } 1\le s\le 3-a,\\
      d_0+d_0^{\frac{s-1}{2-a}+\frac12},&\text{if }s>3-a,
     \end{cases}
\eeq
\beq \label{d0def}
d_0=N_0^4((\theta-\theta') T)^{-2} + M_b^4,\quad\text{with} \quad N_0=\sup_{[T_0+\theta' T,T_0+T]}\|p\|_{L^\infty(U)}.
\eeq 
\end{lemma}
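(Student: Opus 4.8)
The plan is to iterate the energy inequality \eqref{eLs2} of Lemma~\ref{new.it-L} against the Ladyzhenskaya--Uraltseva embedding \eqref{LUineq} of Lemma~\ref{LUK}, on a finite nested family of time sub-intervals of $[T_0+\theta' T,T_0+T]$; as usual one first reduces to $T_0=0$. The base case $s=1$ needs no iteration: by \eqref{Kestn}, $K(|\nabla p|)v\le K(|\nabla p|)|\nabla p|^2\le d_2|\nabla p|^{2-a}$, hence $\int\!\!\int K(|\nabla p|)v\,dx\,dt\le d_2\int\!\!\int(1+|\nabla p|^{2-a})\,dx\,dt$, which is \eqref{new2} with $\mathcal U(1)=0$ and $d_0^{0}=1$.

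For an exponent $\sigma>2$ the basic reduction step goes as follows. Apply \eqref{LUineq} with $w=p(\cdot,t)$, $M=M_b$, the parameter there equal to $\sigma-1\ge1$, and $k=0$ (so that $\max_{\bar U}|p(\cdot,t)-k|\le N_0$ on the relevant interval), then multiply by a time cut-off $\zeta_j$ and integrate in $t$:
\[
\int\!\!\int Kv^{\sigma}\zeta_j\le CN_0^2\int\!\!\int K|\nabla^2 p|^2 v^{\sigma-2}\zeta_j+CM_b^4\int\!\!\int K v^{\sigma-2}\zeta_j.
\]
Feed the first term into \eqref{eLs2} with parameter $\sigma-2>0$ and a cut-off $\tilde\zeta_j\ge\zeta_j$, so that $\int\!\!\int K|\nabla^2 p|^2 v^{\sigma-2}\tilde\zeta_j\le C\int\!\!\int v^{\sigma-1}|\tilde\zeta_j'|$. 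Since $K(|\nabla p|)^{-1}\le d_1^{-1}(1+|\nabla p|)^a$ and $|\nabla p|\le v^{1/2}+M_b$ on $\{v>0\}$, so $(1+|\nabla p|)^a\le C(1+v^{a/2}+M_b^a)$ there, one rewrites the unweighted integral as
\[
\int\!\!\int v^{\sigma-1}|\tilde\zeta_j'|\le \frac{Cm}{(\theta-\theta')T}\int\!\!\int\big(Kv^{\sigma-1}+Kv^{\sigma-1+a/2}+M_b^aKv^{\sigma-1}\big),
\]
where $m$ is the number of sub-intervals and $|\tilde\zeta_j'|\le Cm/((\theta-\theta')T)$. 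Combining the three displays gives a recursion in which every new $v$-exponent is $\le\sigma-(1-a/2)<\sigma$, so, since $a\in(0,1)$, finitely many steps (the number depending only on $s$ and $a$) drive all exponents into $[1,2]$. Exponents $\sigma\in(1,2)$ met along the way are brought to the endpoints by the interpolation $\int\!\!\int Kv^{\sigma}\le(\int\!\!\int Kv)^{2-\sigma}(\int\!\!\int Kv^{2})^{\sigma-1}$ together with Young's inequality; the borderline $\sigma=2$ is handled by \eqref{LUineq} with parameter $1$, \eqref{eLs2} with a parameter $\to0^+$, and a hole-filling absorption. All cut-offs are placed on equally spaced parameters $\theta=\theta_0>\theta_1>\cdots>\theta_m=\theta'$.

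The remaining and genuinely delicate part is the accounting. On the leading branch each step carries a factor $N_0^2((\theta-\theta')T)^{-1}\le d_0^{1/2}$, and since the top $v$-exponent drops by exactly $1-a/2=(2-a)/2$ per step, going from $s$ to $1$ costs $\approx 2(s-1)/(2-a)$ steps, producing the prefactor $d_0^{(s-1)/(2-a)}$ on the seed $\int\!\!\int(1+|\nabla p|^{2-a})$. The subordinate branches — those carrying extra weights $M_b^a=d_0^{a/4}$ or $M_b^4=d_0$, and the remainders of the $\sigma=2$ and interpolation steps, some of which appear with a factor $\int\!\!\int K\le a_0^{-1}|U|T$ — must be shown either to be dominated by $Cd_0^{(s-1)/(2-a)}\int\!\!\int(1+|\nabla p|^{2-a})$ or to assemble into $CT\,\mathcal U(s)$; in particular one checks that for $1\le s\le 3-a$ the recursion is shallow enough that no genuine remainder survives (so $\mathcal U(s)=0$), while for $s>3-a$ the surviving terms force precisely the shape $d_0+d_0^{(s-1)/(2-a)+1/2}$. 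One must also keep $C$ independent of $T_0,T,\theta,\theta'$: the number of steps, hence $C$, may depend on $s$, but the cut-off parameters should enter only through the combination $(\theta-\theta')T$ inside $d_0$. Since the two analytic ingredients \eqref{eLs2} and \eqref{LUineq} are used essentially off the shelf, the whole argument is this (fiddly) bookkeeping around the recursion, and that is where I expect the main obstacle to lie.
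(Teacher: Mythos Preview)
Your overall architecture — combine \eqref{LUineq} and \eqref{eLs2}, iterate on nested time intervals, interpolate for the base range — is exactly right and matches the paper. But there is a genuine gap in the exponent bookkeeping, and it is not merely fiddly: your scheme as written does \emph{not} produce the stated power $d_0^{(s-1)/(2-a)}$.

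The issue is your base range $[1,2]$. With your hole-filling at $\sigma=2$ one gets
\[
\int\!\!\int K v^{2}\;\le\; C N_0^{4}((\theta-\theta')T)^{-2}\!\int\!\!\int K^{-1}\;+\;C M_b^{4}\!\int\!\!\int K\;\le\;C\,d_0\!\int\!\!\int(1+|\nabla p|^{2-a}),
\]
so interpolating between $\sigma=1$ and $\sigma=2$ yields $\int\!\!\int Kv^{\sigma}\le C d_0^{\,\sigma-1}\int\!\!\int(1+|\nabla p|^{2-a})$ for $\sigma\in[1,2]$. But the target is $d_0^{(\sigma-1)/(2-a)}$, strictly smaller than $d_0^{\sigma-1}$ for $d_0>1$. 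On your top branch the factor accumulated from $s$ down to $\tau\in[1,2]$ is exactly $d_0^{(s-\tau)/(2-a)}$; adding your base contribution $d_0^{\tau-1}$ gives $d_0^{(s-\tau)/(2-a)+\tau-1}$, and $(s-\tau)/(2-a)+\tau-1-(s-1)/(2-a)=(\tau-1)(1-a)/(2-a)>0$ whenever $\tau>1$. So the stated exponent is missed.

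The missing idea is to apply the absorption \emph{at every step of the recursion}, not only at the endpoint. After using \eqref{eLs2} with $\zeta^{2}$, split
\[
N_0^{2}\,v^{\,s+1}\zeta|\zeta_t|\;\le\;\tfrac12\,K\,v^{\,s+2}\zeta^{2}\;+\;C\,N_0^{4}\,K^{-1}\,v^{\,s}\,\zeta_t^{2},
\]
absorb the first term into the left side, and use $K^{-1}v^{s}\le C K(v^{s+a}+1+M_b^{2(s+a)})$. This turns your half-step $\sigma\to\sigma-(2-a)/2$ into a full step $\sigma\to\sigma-(2-a)$ with factor $d_0$, and makes the natural base $\sigma=3-a$: one application lands on $K^{-1}v^{1-a}$ and $K v^{1-a}$, both $\le C(1+|\nabla p|^{2-a})$, giving exactly $d_0^{1}=d_0^{(3-a-1)/(2-a)}$. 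Interpolate between $s=1$ and $s=3-a$ (not $s=2$) to cover the whole base range with the sharp exponent, then iterate for $s>3-a$. The remainder accounting for $\mathcal U(s)$ then falls out cleanly.
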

\begin{proof}
Without loss of generality, assume $T_0=0$.
The proof consists of three steps. 

\textbf{Step 1.} Let $\zeta(t)$ be the cut-off function with $\zeta=0$ for $t\le \theta' T$ and $\zeta=1$ for $t\ge \theta T$. For $s\ge 0$,  by applying Lemma \ref{LUK}  with $k=0$, and $s+1$ in place of $s$, multiplying \eqref{LUineq} by $\zeta^2(t)$ and integrating from $0$ to $T$, we  have 
\begin{align*} 
\int_{0}^T \int_U K(|\nabla p|) v^{s+2}\zeta^2 dx dt 
\le  C \max |p |^2 & \int_{0}^T\int_U K(|\nabla p|)|\nabla^2 p|^2 v^s \zeta^2dx dt  +  CM_b^4\int_{0}^T \int_U K(|\nabla p|)v^s \zeta^2 dx dt .
\end{align*}
To estimate the first integral on the right-hand side, we use \eqref{eLs2} with $\zeta^2$ in place of $\zeta$, and find that      
\beq\label{ladyural4}
\begin{aligned} 
&\int_{0}^T \int_U K(|\nabla p|) v^{s+2} \zeta^2dx dt  \le  CN_0^2\int_{0}^T \int_U   v^{s+1}\zeta \zeta_t dx dt+CM_b^4\int_{0}^T \int_U K(|\nabla p|)v^s \zeta^2dx dt .
\end{aligned}
 \eeq 
 Now using Young's inequality we have 
 \begin{multline*} 
\int_{0}^T \int_U K(|\nabla p|) v^{s+2}\zeta^2dx dt  \le  \frac 12 \int_{0}^T \int_U K(|\nabla p|) v^{s+2}\zeta^2 dx dt + CN_0^4\int_{0}^T \int_U  K^{-1}(|\nabla p|) v^{s}\zeta_t^2 dx dt\\
+CM_b^4\int_{\theta T}^T \int_U K(|\nabla p|)v^s \zeta^2dx dt .
\end{multline*}
 Thus  
  \beq\label{ev}
\begin{aligned} 
\int_{0}^T \int_U K(|\nabla p|) v^{s+2}\zeta^2 dx dt \le CN_0^4\int_{0}^T \int_U  K^{-1}(|\nabla p|) v^{s}\zeta_t^2 dx dt+CM_b^4\int_{\theta T}^T \int_U K(|\nabla p|)v^s \zeta^2dx dt.
\end{aligned}
 \eeq 
 Note that 
\begin{align*}
  K^{-1}(|\nabla p|)v^s &\le C K(|\nabla p|)(1+ |\nabla p|)^{2a}v^s \le C K(|\nabla p|)(1+  (v+M_b^2)^a )v^s\\
                                  & \le C K(|\nabla p|)(1+ v^{s+a}+ M_b^{2a}v^s)\le C K(|\nabla p|)(v^{s+a}+ 1+M_b^{2(s+a)}),
\end{align*}
and 
\beqs                              
 K(|\nabla p|)v^s \le C K(|\nabla p|)(1+v^{s+a}).
\eeqs
Thus \eqref{ev} implies 
 \beqs
\begin{aligned} 
\int_{0}^T \int_U K(|\nabla p|) v^{s+2}\zeta^2 dx dt 
&\le C[N_0^4((\theta-\theta')T)^{-2}+M_b^4] \int_{0}^T \int_UK(|\nabla p|)v^{s+a} \chi_{\{\zeta>0\}}dx dt\\
&\quad + C \int_{0}^T \int_U [N_0^4((\theta-\theta')T)^{-2}(1+ M_b^{2(s+a)}) +M_b^4 ]\chi_{\{\zeta>0\}}dxdt.
\end{aligned}
\eeqs 
Therefore,
 \beqs
\begin{aligned} 
\int_{\theta T}^T \int_U K(|\nabla p|) v^{s+2}dx dt 
&\le C[N_0^4((\theta-\theta')T)^{-2}+M_b^4] \int_{\theta' T}^T \int_UK(|\nabla p|)v^{s+a} dx dt\\
&\quad + CT[N_0^4((\theta-\theta')T)^{-2}(1+ M_b^2)^{s+a} +M_b^4 ].
\end{aligned}
 \eeqs 
 Let $N_1=TN_0^4((\theta-\theta')T)^{-2}$ and $N_2=Td_0$. Then
 \beq\label{ev1}
\begin{aligned} 
\int_{\theta T}^T \int_U K(|\nabla p|) v^{s+2}dx dt 
&\le Cd_0 \int_{\theta' T}^T \int_UK(|\nabla p|)v^{s+a} dx dt
 + CN_1 M_b^{2(s+a)} +CN_2.
\end{aligned}
 \eeq 
Equivalently, for $s\ge 2$
 \beq\label{ev10}
\begin{aligned} 
\int_{\theta T}^T \int_U K(|\nabla p|) v^{s}dx dt 
&\le Cd_0 \int_{\theta' T}^T \int_UK(|\nabla p|)v^{s-(2-a)} dx dt
 + CN_1 (1+ M_b^2)^{s-(2-a)} +CN_2.
\end{aligned}
 \eeq 


{\bf Step 2.} We prove \eqref{new2} for $s\in[1,3-a]$.
First,
\beq\label{Kv2}
\int_{\theta T}^T\int_{U} K(|\nabla p|)v dx dt\le \int_{\theta T}^T\int_{U} K(|\nabla p|)|\nabla p|^2 dx dt \le \int_{\theta T}^T\int_{U} (1+|\nabla p|^{2-a}) dx dt.
\eeq
 This yields \eqref{new2} for $s=1$. 

Second, let  $s=1-a$ in \eqref{ev}, 
\begin{align*}
&  \int_{\theta T}^T\int_{U} K(|\nabla p|)v^{3-a} dxdt  
 \le CN_0^4((\theta-\theta') T)^{-2}\int_{\theta' T}^T\int_{U}  (1+|\nabla p|)^{a}v^{1-a}  dx dt+ C M_b^4 \int_{\theta' T}^T\int_{U} v^{1-a} dxdt\\
&\le  CN_0^4((\theta-\theta') T)^{-2}\int_{0}^T\int_{U}  (1+|\nabla p|)^{a}|\nabla p|^{2(1-a)}  dx dt+ C M_b^4 \int_{0}^T\int_{U} |\nabla p|^{2(1-a)} dxdt.
\end{align*}
Therefore,
\beq\label{Kv7}
  \int_{\theta T}^T\int_{U} K(|\nabla p|)v^{3-a} dxdt  \le  Cd_0\int_{0}^T\int_{U}  (1+|\nabla p|^{2-a})  dx dt
\eeq
This implies \eqref{new2} when $s=3-a$. 

Third, when $s \in (1,3-a)$, let $\beta$ be the number in $(0,1)$ such that 
$\frac 1 {s} =\frac {1-\beta}{1}+\frac\beta {3-a}$. 
Then, by interpolation inequality:  
\begin{align*}
\Big(\int_{\theta T}^T\int_{U} K(|\nabla p|)v^{s} dx dt\Big)^{\frac 1 {s}} &\le \Big(\int_{\theta T}^T\int_{U} K(|\nabla p|) v dx dt\Big)^{1-\beta} \Big(\int_{\theta T}^T\int_{U} K(|\nabla p|) v^{3-a}dx dt\Big)^{\frac \beta {3-a}}.
\end{align*}
Using \eqref{Kv7} to estimate the last double integral, we obtain
\beqs
\int_{\theta T}^T\int_{U} K(|\nabla p|)v^{s} dx dt
\le Cd_0^{\beta s/(3-a) } \Big(\int_{0}^T\int_{U}  (1+|\nabla p|^{2-a})  dx dt \Big)^{s(1-\beta+\frac\beta{3-a})}.
\eeqs
Note that $\beta s/(3-a)  = (s-1)/(2-a) $. Then\beq\label{Kgrad24}
\int_{\theta T}^T\int_{U} K(|\nabla p|)v^{s} dx dt
\le Cd_0^\frac{s-1}{2-a}\int_{0}^T\int_{U}  (1+|\nabla p|^{2-a})  dx dt .
\eeq
Therefore, \eqref{Kv2} and \eqref{Kgrad24}  imply \eqref{new2} for $s\in[1,3-a]$. 

{\bf Step 3.} When $s>3-a$, let $m\in \N$ such that 
\beqs \frac {s-(3-a)}{2-a}\le m<\frac {s-(3-a)}{2-a}+1,
\eeqs 
then $s- m(2-a)\in (1,3-a]$. For each $k =1,2,\ldots, m$, let $\theta_k=\theta-(\theta-\theta')k/m$ and $\tilde Q_k = U \times (\theta_kT, T)$. Also, let $\zeta_k(t)$ be a smooth cut-off function 
 which is equal to one on $\tilde Q_k $ and zero on $Q_T\setminus \tilde Q_{k-1}$. There is a positive constant $c>0$ depending on $U$, such that 
 $|\zeta_{k,t}|\le mc[(\theta-\theta') T]^{-1}$ , for all $k = 1, 2,\ldots, m.$ 

From \eqref{ev10} we have 
\begin{align*}
&\int_{\theta T}^T\int_{U} K(|\nabla p|) v^{s} dx dt 
\le Cd_0^m\int_{\theta_m T}^T \int_UK(|\nabla p|)v^{s-m(2-a)} dx dt\\
&\quad + C N_1\Big[M_b^{2(s-(2-a))}+d_0M_b^{2(s-2(2-a))}+d_0^2M_b^{2(s-3(2-a))}+\ldots+d_0^{m-1}M_b^{2(s-m(2-a))}\Big]\\
&\quad + C N_2(1+d_0+d_0^2+\ldots+d_0^{m-1}).
\end{align*}
Using inequality $ \sum_{i=0}^{m-1} z^i \le (1+z)^{m-1},  \forall z>0$, we infer
\begin{align*}
&\int_{\theta T}^T\int_{U} K(|\nabla p|) v^{s} dx dt 
\le Cd_0^m\int_{\theta_m T}^T \int_UK(|\nabla p|)v^{s-m(2-a)} dx dt\\
&\quad + C N_1M_b
^{2(s-(2-a))}[1+d_0M_b^{-2(2-a)}]^{m-1} + CTd_0(1+d_0)^{m-1}.
\end{align*}
Therefore we obtain   
\begin{multline*}
\int_{\theta T}^T\int_{U} K(|\nabla p|) v^{s} dx dt 
\le Cd_0^m\int_{\theta_m T}^T \int_UK(|\nabla p|)v^{s-m(2-a)} dx dt\\
 + C N_1M_b^{2(s-(2-a))}[1+d_0M_b^{-2(2-a)}]^{m-1} + CTd_0(1+d_0)^{m-1}.
\end{multline*}

Since $s-m(2-a)\in(1-a,3-a]$, estimating the double integral on the right-hand side  by \eqref{Kgrad24} gives
\begin{multline*}
\int_{\theta T}^T \int_{U} K(|\nabla p|) v^{s} dx dt 
\le C N_1M_b^{2(s-(2-a))}[1+d_0M_b^{-2(2-a)}]^{m-1} + CTd_0(1+d_0)^{m-1}\\ 
+Cd_0^{\frac{s-m(2-a)-1}{2-a}+m} \int_{\theta_m T}^T \int_{U} (1+|\nabla p|^{2-a}) dxdt.
\end{multline*}

Since $m\le \frac{s-1}{2-a}$ and $\frac{s-m(2-a)-1}{2-a}+m =\frac{s-1}{2-a}$, then 
\begin{multline*}
\int_{\theta T}^T \int_{U} K(|\nabla p|) v^{s} dx dt 
\le C N_1M_b^{2(s-(2-a))}[1+d_0M_b^{-2(2-a)}]^{\frac{s-1}{2-a}-1} + CTd_0(1+d_0)^{\frac{s-1}{2-a}-1}\\ 
+Cd_0^\frac{s-1}{2-a} \int_{\theta_m T}^T \int_{U} (1+|\nabla p|^{2-a}) dxdt
\le C N_1(M_b^{2(s-(2-a))} +d_0^{\frac{s-1}{2-a}-1} M_b^2) + CT(d_0+d_0^\frac{s-1}{2-a})\\ 
+Cd_0^\frac{s-1}{2-a} \int_{\theta_m T}^T \int_{U} (1+|\nabla p|^{2-a}) dxdt.
\end{multline*}
Note that
\begin{align*}
&N_1(M_b^{2(s-(2-a))} +d_0^{\frac{s-1}{2-a}-1} M_b^2) + T(d_0+d_0^\frac{s-1}{2-a})
\le T(d_0 d_0^{2(s-(2-a))/4} +d_0 d_0^{\frac{s-1}{2-a}-1} d_0^{1/2} + d_0+d_0^\frac{s-1}{2-a})\\
&=T(d_0^\frac{s+a}2+d_0^{\frac{s-1}{2-a}+\frac12}+d_0+d_0^\frac{s-1}{2-a})
\le CT(d_0^\frac{s+a}2+d_0^{\frac{s-1}{2-a}+\frac12}+d_0)\le CT(d_0+d_0^{\frac{s-1}{2-a}+\frac12}),
\end{align*}
since $1<\frac{s+a}2<\frac{s-1}{2-a}+\frac12$ for $s>3-a$.
So,
\beq\label{new5}
\int_{\theta T}^T \int_{U} K(|\nabla p|) v^{s} dx dt 
\le C T \mathcal U(s) +Cd_0^\frac{s-1}{2-a} \int_{\theta' T}^T \int_{U} (1+|\nabla p|^{2-a}) dxdt.
\eeq
This completes the proof of \eqref{new2} for all $s\ge 1$.
\end{proof}

From the estimates of $K(|\nabla p|)v^s$ is Lemma \ref{Claimlem}, we derive direct estimates for $|\nabla p|^s$.

\begin{proposition}\label{Prop53}
Let $T_0\ge 0$, $T>0$ and $\theta\in (0,1)$. If $s\ge  2-a$ then
 \beq\label{new0}
\int_{T_0+\theta T}^{T_0+T} \int_{U}  |\nabla p|^{s} dx dt 
\le C T (1+\mathcal D^{\tilde s}) +  C \mathcal D^{\frac{s}{2-a}-1}\int_{T_0}^{T_0+T} \int_{U} |\nabla p|^{2-a} dxdt,
\eeq
and if $s>2$ then
\beq\label{new1}
\sup_{[T_0+\theta T,T_0+T]} \int_U |\nabla p|^s dx
\le C\theta^{-1}(1+\mathcal D^{\tilde s})
+ C (\theta T)^{-1}\mathcal D^{\frac{s}{2-a} -1}\int_{T_0}^{T_0+T} \int_{U} |\nabla p|^{2-a} dxdt,
\eeq
where constant $C>0$ is independent of $T_0$, $T$, and $\theta$,
\beq\label{stil}
\tilde s =
\begin{cases}
\max\{\frac{s+a}2,\frac{s}{2-a}-1\},&\text{if }2-a\le s\le 3(2-a),\\
\frac{s}{2-a},&\text{if }s>3(2-a),
\end{cases}
\eeq
and 
\beq \label{Ddef}
\mathcal D =(\theta T)^{-1} \sup_{[T_0+\theta T/2,T_0+T]}\|p\|_{L^\infty(U)}^2 + \sup_{[T_0+\theta T/2,T_0+T]}|\nabla p|^2_{L^\infty(\Gamma)}.
\eeq 
\end{proposition}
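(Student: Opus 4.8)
The proposition is essentially a repackaging of Lemmas~\ref{Claimlem} and~\ref{new.it-L}: the genuine work is to trade the integrand $|\nabla p|^s$ for $K(|\nabla p|)v^{s'}$ with the right exponent $s'$, and then to re-express the quantities $N_0$, $M_b$, $d_0$ occurring there in terms of $\mathcal D$. From the lower bound $K(\xi)\ge d_1(1+\xi)^{-a}$ in \eqref{K-est-3} one gets the elementary pointwise inequality $\xi^s\le 1+C\,K(\xi)\,\xi^{s+a}$ for $\xi\ge0$. On the set $\{v>0\}$ we have $|\nabla p|^2=v+M_b^2$, hence $|\nabla p|^{s+a}\le C\,(v^{(s+a)/2}+M_b^{s+a})$; on $\{v=0\}$ we have $|\nabla p|\le M_b$. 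Using also that $K$ is bounded, these combine to give
\[
\int_U|\nabla p|^s\,dx\le C\int_U K(|\nabla p|)\,v^{(s+a)/2}\,dx+C\bigl(1+M_b^{s+a}\bigr)|U|,
\]
and since $s\ge2-a$ the exponent $s'=(s+a)/2$ satisfies $s'\ge1$, so Lemma~\ref{Claimlem} applies with $s'$ in place of $s$.

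\textbf{Proof of \eqref{new0}.} I would take $\theta'=\theta/2$ in Lemma~\ref{Claimlem}, so that $N_0$ and $M_b$ are taken over $[T_0+\theta T/2,T_0+T]$, matching \eqref{Ddef}; then $d_0\le C\mathcal D^2$ and $M_b^2\le\mathcal D$ follow at once from \eqref{d0def} and \eqref{Ddef}. Integrating the displayed inequality over $[T_0+\theta T,T_0+T]$ and inserting Lemma~\ref{Claimlem} gives
\[
\int_{T_0+\theta T}^{T_0+T}\!\!\int_U|\nabla p|^s\le CT\,\mathcal U(s')+CT\bigl(1+M_b^{s+a}\bigr)+C\,d_0^{\frac{s'-1}{2-a}}\int_{T_0}^{T_0+T}\!\!\int_U\bigl(1+|\nabla p|^{2-a}\bigr).
\]
Here $\frac{s'-1}{2-a}=\tfrac12\bigl(\tfrac{s}{2-a}-1\bigr)$, so $d_0^{(s'-1)/(2-a)}\le C\mathcal D^{\frac{s}{2-a}-1}$, which is exactly the coefficient of $\int\!\int|\nabla p|^{2-a}$ in \eqref{new0}. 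For the remaining homogeneous terms: if $2-a\le s\le 3(2-a)$ then $1\le s'\le3-a$, so $\mathcal U(s')=0$ by \eqref{Udef}, and $T(1+M_b^{s+a})+T d_0^{(s'-1)/(2-a)}\le CT(1+\mathcal D^{(s+a)/2}+\mathcal D^{\frac{s}{2-a}-1})\le CT(1+\mathcal D^{\tilde s})$ because both $\tfrac{s+a}{2}$ and $\tfrac{s}{2-a}-1$ are $\le\tilde s$ in this regime; if $s>3(2-a)$ then $s'>3-a$ and $\mathcal U(s')=d_0+d_0^{\frac{s'-1}{2-a}+\frac12}=d_0+d_0^{\frac{s}{2(2-a)}}\le C(1+\mathcal D^{\frac{s}{2-a}})$, and, using $\tfrac{s+a}{2}\le\tfrac{s}{2-a}$ for $s\ge2-a$, all homogeneous terms are again $\le CT(1+\mathcal D^{\tilde s})$ with $\tilde s=\tfrac{s}{2-a}$. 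This yields \eqref{new0}.

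\textbf{Proof of \eqref{new1}.} For $s>2$, split again as $\int_U|\nabla p|^s\le C\int_U v^{s/2}+CM_b^s|U|$. Apply Lemma~\ref{new.it-L} with parameter $s/2-1>0$ and a cut-off $\zeta=\zeta(t)$ vanishing on $[T_0,T_0+\theta T/2]$ and $\equiv1$ on $[T_0+\theta T,T_0+T]$ (so $|\zeta_t|\le C(\theta T)^{-1}$), to obtain
\[
\sup_{[T_0+\theta T,T_0+T]}\int_U v^{s/2}\,dx\le C(\theta T)^{-1}\int_{T_0+\theta T/2}^{T_0+T}\!\!\int_U v^{s/2}\le C(\theta T)^{-1}\int_{T_0+\theta T/2}^{T_0+T}\!\!\int_U|\nabla p|^s.
\]
Estimate the last space--time integral by \eqref{new0} applied with $\theta/2$ in place of $\theta$, and add back $CM_b^s|U|\le C\mathcal D^{s/2}|U|\le C(1+\mathcal D^{\tilde s})$ (legitimate since $\tilde s\ge s/2$ for $s>2$); collecting the powers of $\theta$ and $(\theta T)$ gives \eqref{new1}.

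\textbf{Main obstacle.} The pointwise inequalities for $K$ and the Young-type bounds on $(v+M_b^2)^p$ are routine; the delicate part is the exponent bookkeeping, namely checking in both ranges of $s$ in \eqref{stil} that $\tfrac{s+a}{2}$, $\tfrac{s}{2-a}-1$ and (for $s>3(2-a)$) $\tfrac{s}{2(2-a)}$ are all dominated by $\tilde s$, so that every error term collapses into $CT(1+\mathcal D^{\tilde s})$, respectively $C\theta^{-1}(1+\mathcal D^{\tilde s})$. One small nuisance is that invoking \eqref{new0} with $\theta/2$ replaces the interval $[T_0+\theta T/2,T_0+T]$ in the definition \eqref{Ddef} of $\mathcal D$ by $[T_0+\theta T/4,T_0+T]$; this only enlarges $\mathcal D$ by a harmless bounded factor and is absorbed into the generic constant $C$.
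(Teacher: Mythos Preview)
Your argument for \eqref{new0} is correct and is essentially the paper's proof, just streamlined: the paper first bounds $K(|\nabla p|)|\nabla p|^{2\sigma}$ by $K(|\nabla p|)v^\sigma+CM_b^{2\sigma}$, applies Lemma~\ref{Claimlem}, then passes to $|\nabla p|^{2\sigma-a}$ via \eqref{Kestn} and substitutes $s=2\sigma-a$. You collapse these steps by writing $|\nabla p|^s\le 1+CK(|\nabla p|)v^{(s+a)/2}+CM_b^{s+a}$ directly and applying Lemma~\ref{Claimlem} with $s'=(s+a)/2$; the exponent bookkeeping is identical.

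For \eqref{new1} there is a small but genuine gap. You correctly observe that invoking \eqref{new0} with $\theta/2$ shifts the supremum interval in $\mathcal D$ from $[T_0+\theta T/2,T_0+T]$ to $[T_0+\theta T/4,T_0+T]$, but your claim that this ``only enlarges $\mathcal D$ by a harmless bounded factor'' is false: the quantities $\sup_{[\theta T/4,T]}\|p\|_{L^\infty}$ and $\sup_{[\theta T/4,T]}\|\nabla p\|_{L^\infty(\Gamma)}$ over the longer interval are not controlled by any fixed multiple of the same suprema over $[\theta T/2,T]$; only the prefactor $(\theta T)^{-1}$ changes by a factor of~$2$. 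The paper avoids this by taking the cut-off $\zeta$ to vanish on $[T_0,T_0+3\theta T/4]$ and then, rather than quoting \eqref{new0} as a black box with $\theta_1=3\theta/4$, re-running its proof with $\theta_1$ in place of $\theta$ while keeping $\theta'=\theta/2$; this leaves $N_0,M_b$ over $[\theta T/2,T]$ and gives
\[
d_0=N_0^4\bigl((\theta_1-\theta')T\bigr)^{-2}+M_b^4=16\,N_0^4(\theta T)^{-2}+M_b^4\le 16\,\mathcal D^2
\]
with the intended $\mathcal D$. Alternatively, your argument does prove \eqref{new1} verbatim if one redefines $\mathcal D$ over $[T_0+\theta T/4,T_0+T]$, which is harmless for every subsequent application in the paper; but it does not prove the proposition exactly as stated.
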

\begin{proof} 
Without loss of generality, assume $T_0=0$. Let $M_b$, $v$ and $d_0$ be defined as in Lemma \ref{Claimlem}.

{\bf Proof of \eqref{new0}.}  From the definition of the function $v$ and inequality $|a-b|^\gamma \ge 2^{-\gamma} a^\gamma -b^\gamma$ for $a,b\ge 0, \gamma>0$, we have
\beqs
\begin{split}
  \int_{\theta T}^T\int_{U} K(|\nabla p|)|\nabla p|^{2s} dxdt  
\le \int_{\theta T}^T \int_{U} K(|\nabla p|) v^s dxdt
+CTM_b^{2s}.
\end{split}
\eeqs

We apply \eqref{new2} with $\theta'=\theta/2$ to estimate the integral on the right-hand side. It results in 
\beq\label{new6}
\int_{\theta T}^T \int_{U} K(|\nabla p|) |\nabla p|^{2s} dx dt 
\le C T \mathcal U(s)+CTM_b^{2s} +Cd_0^\frac{s-1}{2-a} \int_{0}^T \int_{U} (1+|\nabla p|^{2-a}) dxdt.
\eeq
For $s>3-a$, then by definition \eqref{Udef} and then Young's inequality, it follows
\begin{multline*}
\int_{\theta T}^T \int_{U} K(|\nabla p|) |\nabla p|^{2s} dx dt 
\le C T (d_0+d_0^{\frac{s-1}{2-a}+\frac12} + d_0^{s/2} +d_0^\frac{s-1}{2-a}) + C d_0^\frac{s-1}{2-a}\int_{0}^T \int_{U} |\nabla p|^{2-a} dx\\
\le C T (1+d_0^{\frac{s-1}{2-a}+\frac12}) +  C d_0^\frac{s-1}{2-a}\int_{0}^T \int_{U} |\nabla p|^{2-a} dxdt.
\end{multline*}
Estimating $K(|\nabla p|)$ from below by \eqref{Kestn}, we get
\beq\label{gb1}
\int_{\theta T}^T \int_{U} |\nabla p|^{2s-a} dx dt 
\le C T (1+d_0^{\frac{s-1}{2-a}+\frac12}) +  C d_0^\frac{s-1}{2-a}\int_{0}^T \int_{U} |\nabla p|^{2-a} dxdt.
\eeq
Replacing $2s-a$ by $s>2(3-a)-a=3(2-a)$ in \eqref{gb1} we obtain
\beq\label{new7}
\int_{\theta T}^T |\nabla p|^{s} dx dt 
\le C T (1+d_0^{ \frac{s}{2(2-a)} } ) +  C d_0^{ \frac{s-(2-a)}{2(2-a)}} \int_{0}^T \int_{U} |\nabla p|^{2-a} dxdt.
\eeq

For $1\le s\le 3-a$, we have from \eqref{new6} that
\beqs
\int_{\theta T}^T \int_{U} K(|\nabla p|) |\nabla p|^{2s} dx dt 
\le CT(d_0^{s/2} + d_0^\frac{s-1}{2-a}) +Cd_0^\frac{s-1}{2-a} \int_{0}^T \int_{U} |\nabla p|^{2-a} dxdt.
\eeqs
Using property \eqref{Kestn} again yields
\beqs
\int_{\theta T}^T \int_{U} |\nabla p|^{2s-a} dx dt 
\le CT(1+d_0^{s/2} + d_0^\frac{s-1}{2-a}) +Cd_0^\frac{s-1}{2-a} \int_{0}^T \int_{U} |\nabla p|^{2-a} dxdt .
\eeqs
Replacing $2s-a$ by $s\in [2-a,3(2-a)]$ gives
\begin{multline}\label{new8}
\int_{\theta T}^T \int_{U} |\nabla p|^{s} dx dt 
\le CT(1+d_0^\frac{s+a}{4} + d_0^\frac{s-(2-a)}{2(2-a)}) +Cd_0^\frac{s-(2-a)}{2(2-a)} \int_{0}^T \int_{U} |\nabla p|^{2-a} dx dt\\
\le CT(1+d_0^{\max\{\frac{s+a}{4},\frac{s-(2-a)}{2(2-a)} \} }) +Cd_0^\frac{s-(2-a)}{2(2-a)} \int_{0}^T \int_{U} |\nabla p|^{2-a} dx dt.
\end{multline}

Note that $d_0^{1/2} \le C\mathcal D$. Then combing \eqref{new7}, for the case $s>3(2-a)$, with \eqref{new8}, for the case  $s\in [2-a,3(2-a)]$, we obtain \eqref{new0} for all $s\ge 2-a$.
   
{\bf Proof of \eqref{new1}.}  For $s>2$, we have
from \eqref{eLs2} with $\theta'=\theta/2$ that 
\begin{align*}
\sup_{[0,T]} \int_U |\nabla p|^s \zeta dx
&\le C\sup_{[0,T]} \int_U (v^{s/2} +M_b^{s}) \zeta dx
\le CM_b^{s} +\int_0^T \int_U v^{s/2} \zeta_t dxdt\\
&\le   C  M_b^{s} +C\int_0^T \int_U |\nabla p|^{s/2}|\zeta_t| dxdt.
\end{align*}
Choose $\zeta(t)$ such that $\zeta=0$ for $t\le 3\theta T/4$, and $\zeta =1$ for $t\ge \theta T$. 
Then
\beq\label{new4}
\sup_{[\theta T,T]} \int_U |\nabla p|^s \zeta dx
\le  C d_0^{s/4}
+ C (\theta T)^{-1}\int_{\frac {3\theta T}4}^T \int_U |\nabla p|^{s} dx dt,
\eeq
with $d_0$ being defined by using, again, $\theta'=\theta/2$.
We apply \eqref{new0} with $\theta$ being replaced by $\theta_1=3\theta/4$ and $d_0$, hence $\mathcal D$, defined by using $\theta'=2\theta_1/3$.  We obtain
\begin{align*}
\sup_{[\theta T,T]} \int_U |\nabla p|^s dx
&\le  C \mathcal D^{s/2}+C\theta^{-1}(1+\mathcal D^{\tilde s})
+ C (\theta T)^{-1}\mathcal D^{\frac{s}{2-a}-1} \int_{0}^T \int_{U} |\nabla p|^{2-a} dxdt \\
&\le C\theta^{-1}(1+\mathcal D^{\tilde s})
+ C (\theta T)^{-1}\mathcal D^{\frac{s}{2-a}-1} \int_{0}^T \int_{U} |\nabla p|^{2-a} dxdt.
\end{align*}
Thus, we obtain \eqref{new1}. This completes the proof of the theorem.
\end{proof}


Now, we combine Proposition \ref{Prop53} with estimates in section \ref{revision} to express the bounds for the gradient's $L^s$-norms in terms of the initial and boundary data.

In the following $\alpha$ always satisfies \eqref{alcond}. 
Let $p_1$, $q_1$ be fixed and $\delta_1$ be defined as in Theorem \ref{Ntheo42}.
Also, let $z_1$, $z_2$, $z_3$ be defined as in Corollary \ref{Nsim}, and $\mu_0$ be defined by \eqref{mu0}.
We define a number of constants and quantities that will be used for the rest of the paper:
\beq\label{kap1}
\kappa_1=\max\{2\mu_0,1+2/\delta_1\},
\eeq
\beq\label{K1}
\Kzero(t)=1+ [Env A(\alpha,t)]^\frac{1}{\alpha-a},\quad 
\Kth(t)=1+  \beta(\alpha)^\frac {1}{\alpha-2a} + \sup_{[t-2, t]}A(\alpha,\cdot)^\frac{1}{\alpha-a},
\eeq
\begin{multline}\label{K2}
\Kfirst(t)=1+\sup_{[0,t]} (\|\Psi\|_{L^\infty}+\|\Psi_t\|_{L^\infty}^{\mu_0}+\|\nabla \Psi\|_{L^\infty}^2+ \|\nabla^2 \Psi\|_{L^\infty})\\
+(\|\nabla \Psi\|_{L^{\alpha q_1}(U\times(0,t))}+\|\Psi_t\|_{L^{\alpha q_1}(U\times(0,t))})^{z_2},
\end{multline}
\begin{multline}\label{bK2}
\Kmore(t)=1+\sup_{[t-2,t]} (\|\Psi\|_{L^\infty}+\|\Psi_t\|_{L^\infty}^{\mu_0}+\|\nabla \Psi\|_{L^\infty}^2+ \|\nabla^2 \Psi\|_{L^\infty})\\
+(\|\nabla \Psi\|_{L^{\alpha q_1}(U\times(t-2,t))}+\|\Psi_t\|_{L^{\alpha q_1}(U\times(t-2,t))})^{z_2}.
\end{multline}


\begin{theorem}\label{gradsthm}
Let $s> 2$. 

{\rm (i)} If $0<t\le 3$ then 
\begin{multline}\label{es5}
\int_{U} |\nabla p(x,t)|^s dx 
\le Ct^{-1-\tilde s \kappa_1} (1 + \|\bar{p}_0\|_{L^\alpha(U)})^{2\tilde s z_3+2} \Kzero(t)^{2\tilde s z_3} \Kfirst(t)^{2\tilde s}
\Big( 1+\int_0^t G_1(\tau)d\tau\Big)\\
\cdot \exp\Big\{C' t^{-1/\delta_1}(1 + \|\bar{p}_0\|_{L^\alpha(U)})^{z_3} \Kzero(t)^{z_3}(1+\|\nabla \Psi\|_{L^{\alpha q_1}(U\times(0,t))}+\|\Psi_t\|_{L^{\alpha q_1}(U\times(0,t))})^{z_2}\Big\}.
\end{multline}

{\rm (ii)} If $t> 2$ then
\begin{multline}\label{es6}
\int_{U} |\nabla p(x,t)|^s dx 
\le C(1 + \|\bar{p}_0\|_{L^\alpha(U)})^{2\tilde s z_3+\alpha}
\Kzero(t)^{2\tilde s z_3+\alpha} \Kmore(t)^{2\tilde s}
\Big(1+\int_{t-1}^t G_1(\tau)d\tau\Big)\\
\cdot \exp\Big\{C' (1 + \|\bar{p}_0\|_{L^\alpha(U)}^{z_3})\Kzero(t)^{z_3}
 (1+ \|\nabla \Psi\|_{L^{\alpha q_1}( U\times (t-2,t))}+ \|\Psi_t\|_{L^{\alpha q_1}( U\times (t-2,t))})^{z_2} \Big\}.
\end{multline}
\end{theorem}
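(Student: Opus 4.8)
The plan is to feed the $L^\infty$-bound for $\bar p$ and the boundary-gradient bound into the $L^s$-estimate \eqref{new1} of Proposition~\ref{Prop53}, and to control the leftover term $\int_{T_0}^{T_0+T}\int_U|\nabla p|^{2-a}\,dx\,dt$ by first passing, via \eqref{Hcompare}, to $\int_{T_0}^{T_0+T}\int_U H(|\nabla p|)\,dx\,dt$ and then invoking \eqref{t0} when $t$ is small, or Theorem~\ref{UGcor}(i) when $t$ is large.

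\textbf{Part (i).} For $0<t\le3$ I would apply \eqref{new1} with $T_0=0$, $T=t$, $\theta=1/2$, so that $t\in[t/2,t]$ and, by \eqref{Ddef}, $\mathcal D=\frac2t\sup_{[t/4,t]}\|p(\tau)\|_{L^\infty}^2+\sup_{[t/4,t]}\|\nabla p(\tau)\|_{L^\infty(\Gamma)}^2$. On $[t/4,t]$ I bound $\|p\|_{L^\infty}\le\|\bar p\|_{L^\infty}+\|\Psi\|_{L^\infty}$, estimate $\|\bar p(\tau)\|_{L^\infty}$ by \eqref{Ne2} and $\|\nabla p(\tau)\|_{L^\infty(\Gamma)}$ by \eqref{b1}; since $\tau\ge t/4$ I may replace $\tau^{-1/\delta_1}$, $\tau^{-\mu_0}$ by $Ct^{-1/\delta_1}$, $Ct^{-\mu_0}$, use monotonicity of $Env\,A(\alpha,\cdot)$, the inclusion $(0,\tau)\subset(0,t)$, the elementary inequality $1+x+y\le(1+x)(1+y)$, and the definitions \eqref{K1}--\eqref{K2} of $\Kzero(t)$ and $\Kfirst(t)$. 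This yields $\mathcal D\le C\,t^{-\kappa_1}(1+\|\bar{p}_0\|_{L^\alpha})^{2z_3}\Kzero(t)^{2z_3}\Kfirst(t)^2\,e^{2C'E(t)}$, where $\kappa_1=\max\{1+2/\delta_1,2\mu_0\}$ as in \eqref{kap1} and $E(t)$ denotes the data factor multiplying $C'$ in the exponential of \eqref{es5}. For the integral term, \eqref{Hcompare} gives $|\nabla p|^{2-a}\le C(1+H(|\nabla p|))$, and then \eqref{t0} together with $\|\bar{p}_0\|_{L^2}\le C\|\bar{p}_0\|_{L^\alpha}$ (valid since $\alpha\ge2$ by \eqref{alcond} and $U$ is bounded) gives $\int_0^t\int_U|\nabla p|^{2-a}\,dx\,dt\le C(1+\|\bar{p}_0\|_{L^\alpha})^2\big(1+\int_0^t G_1(\tau)\,d\tau\big)$. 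Inserting these into \eqref{new1}, raising $\mathcal D$ to the powers $\tilde s$ and $\tfrac{s}{2-a}-1$, and using $\tilde s\ge\tfrac{s}{2-a}-1$ together with $0<t\le3$ (to dominate the lower negative powers of $t$ by $t^{-1-\tilde s\kappa_1}$, and the lower powers of $1+\|\bar{p}_0\|_{L^\alpha}$, $\Kzero(t)$, $\Kfirst(t)$ by the ones written in \eqref{es5}), one obtains \eqref{es5}. The extra factor $e^{2\tilde s C'E(t)}$ created by $\mathcal D^{\tilde s}$ is absorbed into $e^{C'E(t)}$, since the generic constant $C'$ is allowed to depend on $s$.

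\textbf{Part (ii).} For $t>2$ the argument has the same structure but is localized in time: apply \eqref{new1} with $T_0=t-1$, $T=1$, $\theta=1/2$, so $\mathcal D$ is controlled on $[t-3/4,t]\subset(1,t)$, where one uses the large-time estimates \eqref{Nsmall} and \eqref{b2} in place of \eqref{Ne2}, \eqref{b1}. Every window $(\tau-1,\tau)$ with $\tau\in[t-3/4,t]$ lies in $(t-2,t)$, so all arising $L^\infty$- and $L^{\alpha q_1}$-norms of $\Psi$, $\Psi_t$, $\nabla\Psi$, $\nabla^2\Psi$ are bounded by $\Kmore(t)$ from \eqref{bK2}, and $[Env\,A(\alpha,\tau)]^{1/(\alpha-a)}\le\Kzero(t)$; hence $\mathcal D\le C\,\Kmore(t)^2(1+\|\bar{p}_0\|_{L^\alpha})^{2z_3}\Kzero(t)^{2z_3}\,e^{2C'E_2(t)}$, with $E_2(t)$ the data factor in the exponential of \eqref{es6}. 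For the integral term one uses \eqref{Hcompare} and \eqref{all1} (legitimate since \eqref{alcond} forces $\alpha=\widehat\alpha\ge\widehat2$), getting $\int_{t-1}^t\int_U|\nabla p|^{2-a}\,dx\,dt\le C(1+\|\bar{p}_0\|_{L^\alpha})^\alpha\Kzero(t)^\alpha\big(1+\int_{t-1}^t G_1(\tau)\,d\tau\big)$; this is the source of the extra exponent $+\alpha$ in \eqref{es6}. Collecting powers as in part (i), using $\tilde s\ge\tfrac{s}{2-a}-1$ and $\Kzero(t),\Kmore(t)\ge1$, yields \eqref{es6}.

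The substitutions themselves are routine; the genuinely delicate point is the bookkeeping — choosing the nested time windows so that every invoked estimate applies on the correct subinterval, and then checking that each exponent produced by the composition (of $t$, of $1+\|\bar{p}_0\|_{L^\alpha}$, of $\Kzero$, and of $\Kfirst$ or $\Kmore$) is dominated by the one recorded in \eqref{es5}--\eqref{es6}. One must in particular keep in mind that $\mathcal D$ already carries an exponential factor, so raising it to a power only enlarges the constant multiplying that exponent, which is harmless since $C'$ is permitted to depend on $s$.
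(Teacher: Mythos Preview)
Your proposal is correct and follows essentially the same approach as the paper: the same choices of $T_0$, $T$, $\theta$ in Proposition~\ref{Prop53}, the same $L^\infty$- and boundary-gradient inputs (\eqref{Ne2}, \eqref{b1} for small time; \eqref{Nsmall}, \eqref{b2} for large time), and the same control of $\int\!\!\int|\nabla p|^{2-a}$ via \eqref{Hcompare} with \eqref{t0} or \eqref{all1}. Your bookkeeping of the exponents, including the absorption of $e^{2\tilde s C'E(t)}$ into $e^{C'E(t)}$ and the origin of the extra $+\alpha$ in part~(ii), matches the paper's.
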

\begin{proof}
(i)   Let $t\in(0,3]$. Applying \eqref{new1} to $T_0=0$, $T=t$ and $\theta=1/2$ gives
 \beqs
\int_U |\nabla p(x,t)|^s dx
\le C\Big( 1+\mathcal D^{\tilde s}+ C t^{-1}\mathcal D^{\frac{s}{2-a} -1}\int_{0}^{t} \int_{U} |\nabla p|^{2-a} dxd\tau\Big),
\eeqs
where $\mathcal D$ is defined by \eqref{Ddef}. Since $\tilde s \ge \frac{s}{2-a}-1$, then
\beq\label{new11}
\int_U |\nabla p(x,t)|^s dx
\le Ct^{-1}( 1+\mathcal D)^{\tilde s}\Big(1+\int_{0}^{t} \int_{U} |\nabla p|^{2-a} dxd\tau\Big),
\eeq
Note that
 \begin{align*}
\mathcal D 
&\le Ct^{-1} \sup_{[t/4,t]} \|p\|_{L^\infty}^2 + \sup_{[t/4,t]}  \|\nabla p\|^2_{L^\infty(\Gamma)}\\
&\le Ct^{-1} \sup_{[t/4,t]} \|\bar p\|_{L^\infty}^2 +Ct^{-1} \sup_{[t/4,t]} \|\Psi\|_{L^\infty}^2+ \sup_{[t/4,t]}  \|\nabla p\|^2_{L^\infty(\Gamma)}.
\end{align*}
To estimate $\|\bar p\|_{L^\infty}$ we use  \eqref{Ne2}.
To estimate    $\|\nabla p\|_{L^\infty(\Gamma)}$, we apply \eqref{b1} of Corollary \ref{Cor5}.
We obtain
\beqs
\mathcal D\le C(t^{-1-2/\delta_1}R_1^2+t^{-1}R_2^2 + t^{-2\mu_0}R_3^2 e^{C' t^{-1/\delta_1}R_1})
\eqdef C\bar{\mathcal D},
\eeqs
where
\beqs
R_1=(1 + \|\bar{p}_0\|_{L^\alpha})^{z_3}\Kzero(t)^{z_3}(1+\|\nabla \Psi\|_{L^{\alpha q_1}(U\times(0,t))}+\|\Psi_t\|_{L^{\alpha q_1}(U\times(0,t))})^{z_2},
\eeqs
\beqs
 R_2=\sup_{[0,t]}\|\Psi\|_{L^\infty} ,\quad R_3=1+\sup_{[0,t]}( \|\Psi_t\|_{L^\infty}^{\mu_0}+\|\nabla \Psi\|_{L^\infty}^2+ \|\nabla^2 \Psi\|_{L^\infty}).
\eeqs
By relation \eqref{Hcompare}, we have $|\nabla p|^{2-a}\le C(1+H(|\nabla p|))$, hence we  can use \eqref{t0}  to estimate the last integral in \eqref{new11}.
Substituting these estimates  into \eqref{new11} gives
 \beq\label{es1}
\int_{U} |\nabla p(x,t)|^s dx 
\le Ct^{-1}\bar {\mathcal D}^{\tilde s} \Big( 1+\|\bar p_0\|_{L^2}^2 +\int_0^t G_1(\tau)d\tau\Big),
\eeq
hence
\beq\label{es2}
\int_{U} |\nabla p(x,t)|^s dx 
\le Ct^{-1-\tilde s \kappa_1} (R_1+R_2+R_3)^{2\tilde s} R_4 e^{ C' t^{-1/\delta_1}R_1},
\eeq
where 
$R_4=(1+\|\bar p_0\|_{L^2})^2 (1+\int_0^t G_1(\tau)d\tau)$.
For the sum $R_1+R_2+R_3$, we clearly see that $R_2,R_3\le \Kfirst(t)$, and 
\beqs
R_1\le C(1 + \|\bar{p}_0\|_{L^\alpha(U)})^{z_3}  \Kzero(t)^{z_3}\Kfirst(t).
\eeqs
Therefore
\begin{multline}\label{Grad-est}
\int_{U} |\nabla p(x,t)|^s dx 
\le Ct^{-1-\tilde s \kappa_1} (1+\|\bar p_0\|_{L^2})^2 (1 + \|\bar{p}_0\|_{L^\alpha})^{2\tilde s z_3}[\Kzero(t)^{ z_3}\Kfirst(t)]^{2\tilde s}
\Big( 1+\int_0^t G_1(\tau)d\tau\Big)\\
\cdot \exp\Big\{C t^{-1/\delta_1}R_1\Big\}.
\end{multline}
Then by H\"older's inequality, we can combine the powers of $L^2$ and $L^\alpha$ norms of $\bar p_0$, and obtain \eqref{es5}.

(ii)  Let $t> 2$. Applying \eqref{new1} with $T_0=t-1$, $T=1$, $\theta=1/2$, 
  \begin{multline}\label{new10}
\int_U |\nabla p(x,t)|^s dx
\le C(1+\mathcal D^{\tilde s})
+ C \mathcal D^{\frac{s}{2-a} -1}\int_{t-1}^{t} \int_{U} |\nabla p|^{2-a} dxd\tau\\
\le C(1+\mathcal D)^{\tilde s}\Big[1+\int_{t-1}^{t} \int_{U} |\nabla p|^{2-a} dxd\tau\Big],
\end{multline}
where $\mathcal D$ is defined in \eqref{Ddef}. We have
\beq \label{Dlarge}
  \begin{aligned}
   \mathcal D 
 &\le C \sup_{[t-1, t]}  \Big[\norm{p}_{L^\infty} +  \norm{\nabla p}_{L^\infty(\Gamma)}  \Big]^2\\
 &\le C  \sup_{[t-1, t]} \Big[ \norm{\bar p}_{L^\infty} +  \norm{\Psi}_{L^\infty} + \norm{\nabla p}_{L^\infty(\Gamma)}  \Big]^2.
 \end{aligned}
  \eeq
Similar to part (i), but using \eqref{Nsmall} and \eqref{b2} instead of \eqref{Ne2} and \eqref{b1},
we have
  \beq\label{D1}
 \mathcal D \le C(R_5^2 +R_6^2+R_7^2e^{C'R_5}),
 \eeq
 where
 \begin{align*}
 R_5&=(1+ \|\bar{p}_0\|_{L^\alpha} )^{z_3}\Kzero(t)^{z_3}(1+ \|\nabla \Psi\|_{L^{\alpha q_1}( U\times (t-2,t))}+ \|\Psi_t\|_{L^{\alpha q_1}( U\times (t-2,t))})^{z_2} ,\\
 R_6&=\sup_{[t-1, t]}\|\Psi\|_{L^\infty},\quad
 R_7=1+\max_{[t-2,t]} ( \|\Psi_t\|_{L^\infty}^{\mu_0}+\|\nabla \Psi\|_{L^\infty}^2+\|\nabla^2 \Psi\|_{L^\infty}).
 \end{align*}
 Combining  \eqref{new10}, \eqref{D1}  and \eqref{all1} gives
 \begin{multline}\label{larget10}
\int_{U} |\nabla p(x,t)|^s dx 
\le C (R_5^2 +R_6^2+R_7^2e^{C'R_5})^{\tilde s} \Big( 1+\|\bar p_0\|_{L^\alpha}^\alpha +Env A(\alpha,t)^\frac{\alpha}{\alpha-a}+\int_{t-1}^t G_1(\tau)d\tau\Big)\\
\le C (R_5 +R_6+R_7e^{C'R_5})^{2\tilde s} ( 1+\|\bar p_0\|_{L^\alpha})^\alpha \Kzero(t)^\alpha\Big(1+\int_{t-1}^t G_1(\tau)d\tau\Big).
\end{multline}
Note that $R_6,R_7\le \Kmore(t)$ and
\beqs
 R_5\le C(1+ \|\bar{p}_0\|_{L^\alpha} )^{z_3} \Kzero(t)^{z_3}\Kmore(t)^{z_2}.
\eeqs
Therefore, similar to part (i), we obtain \eqref{es6} from \eqref{larget10}.
\end{proof}

\begin{remark}
{\rm (a)} Regarding the regularity requirement for the initial data $p_0(x)$ in estimates of $ \|\nabla p\|_{L^s}$ in later time $t>0$, the right-hand side of \eqref{es5} and \eqref{es6} only need $\|p_0\|_{L^\alpha}$. This shows the regularity gain  of the solution when $t>0$.

{\rm (b)} The $W^{1,s}$-estimates established above hold for $s>n$, hence the H\"older estimates  follow thanks to Morrey's embedding. This is opposite to Ladyzhenskaya-Uraltseva's proofs in \cite{LadyParaBook68} which establish  H\"older  continuity first and use it to obtain $W^{1,s}$-estimates, \underline{but not for all $s$}. 
The reason for this simplification is our equation's special structure, see \eqref{lin-p}--\eqref{Hcompare}.   
\end{remark}

For large time estimates, the bounds in Theorem \ref{gradsthm} can be established to be independent of the initial data as in the following. 

\begin{theorem}\label{theo55}
  
{\rm (i)} If $A(\alpha)<\infty$ then
\begin{multline}\label{LsupGradp-s}
\limsup_{t\to\infty}\int_{U} |\nabla p(x,t)|^s dx
 \le C\Azero^{2\tilde s z_3+\alpha} \Amore^{2\tilde s} \Afirst \\
\cdot\exp\Big\{  C'\Azero^{z_3}(1+\limsup_{t\to\infty}( \|\nabla \Psi(t)\|_{L^{\alpha q_1}( U\times (t-1,t))}+ \|\Psi_t(t)\|_{L^{\alpha q_1}( U\times (t-1,t))}))^{z_2}
\Big\} ,
\end{multline}
where
\beq\label{tilA1}
\Azero=1+A(\alpha)^\frac {1} {\alpha-a},\quad
\Afirst=1+\limsup_{t\to\infty} \int_{t-1}^tG_1(\tau)d\tau,
\eeq
\begin{multline}\label{tilA2}
\Amore=1+ \limsup_{t\to\infty} (\|\Psi(t)\|_{L^\infty}+\|\Psi_t(t)\|_{L^\infty}^\frac2{2-a}+\|\nabla \Psi(t)\|_{L^\infty}^2+\|\nabla^2 \Psi(t)\|_{L^\infty}) \\
+ \limsup_{t\to\infty}(\|\nabla \Psi(t)\|_{L^{\alpha q_1}( U)}+\|\Psi_t(t)\|_{L^{\alpha q_1}( U)})^{z_2}.
\end{multline}

{\rm (ii)} If $\beta(\alpha)<\infty$ then there is $T>0$ such that for all $t>T$,
\begin{multline}\label{Gradp-sL}
\int_{U} |\nabla p(x,t)|^s dx \le C\Kth(t)^{2\tilde s z_3+\alpha}
\Kmore(t)^{2\tilde s}\Big (1+\int_{t-1}^tG_1(\tau)d\tau\Big )\\
\cdot\exp\Big\{  C'\Kth(t)^{z_3} (1+\|\nabla \Psi(t)\|_{L^{\alpha q_1}( U\times (t-2,t))}+\|\Psi_t(t)\|_{L^{\alpha q_1}( U\times (t-2,t))})^{z_2}
\Big\}.
\end{multline}
\end{theorem}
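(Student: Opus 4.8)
The plan is to follow the scheme of the proof of Theorem \ref{gradsthm}(ii), simply replacing every finite-time input by its large-time counterpart from Sections \ref{revision}, \ref{PSec} and \ref{PGradBdry}. In both parts we apply the basic estimate \eqref{new1} on the window $[t-1,t]$, i.e. with $T_0=t-1$, $T=1$, $\theta=1/2$, and use $\tilde s\ge \frac{s}{2-a}-1$ to write
\begin{equation*}
\int_U |\nabla p(x,t)|^s\,dx\le C(1+\mathcal D)^{\tilde s}\Big(1+\int_{t-1}^t\int_U |\nabla p|^{2-a}\,dx\,d\tau\Big),
\end{equation*}
with $\mathcal D$ as in \eqref{Ddef} for these parameters; since $T=1$, $\theta=1/2$, one has $\mathcal D\le C\sup_{[t-1,t]}\big(\|\bar p\|_{L^\infty(U)}+\|\Psi\|_{L^\infty(U)}+\|\nabla p\|_{L^\infty(\Gamma)}\big)^2$, exactly as in \eqref{Dlarge}. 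Also, by \eqref{Hcompare}, $|\nabla p|^{2-a}\le C(1+H(|\nabla p|))$, so the last integral is controlled by $\int_{t-1}^t\int_U H(|\nabla p|)\,dx\,d\tau$ plus a constant.

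\textbf{Part (i).} Assuming $A(\alpha)<\infty$, we bound $\limsup_{t\to\infty}\sup_{[t-1,t]}\|\bar p\|_{L^\infty(U)}$ by \eqref{NlimI}, $\limsup_{t\to\infty}\sup_{[t-1,t]}\|\nabla p\|_{L^\infty(\Gamma)}$ by \eqref{b3}, and $\limsup_{t\to\infty}\int_{t-1}^t\int_U H(|\nabla p|)\,dx\,d\tau$ by \eqref{lim3}. Each of these limsups is finite, and $x\mapsto (1+x)^{\tilde s}$ and $x\mapsto e^{C'x}$ are continuous and increasing, so the limsup passes through the product and through the exponential factor: the limsup of a product of eventually-bounded nonnegative functions is at most the product of their limsups. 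Collecting the emerging quantities into $\Azero$, $\Amore$, $\Afirst$ exactly as in \eqref{es6} (combining the various powers $z_2$, $z_3$, $\tilde s$, $\alpha$) yields \eqref{LsupGradp-s}.

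\textbf{Part (ii).} Assuming instead $\beta(\alpha)<\infty$, we repeat the same computation but for $t\ge T$ with $T$ large: now $\sup_{[t-1,t]}\|\bar p\|_{L^\infty(U)}$ is estimated by \eqref{NPbarI}, $\sup_{[t-1,t]}\|\nabla p\|_{L^\infty(\Gamma)}$ by \eqref{b4}, and $\int_{t-1}^t\int_U H(|\nabla p|)\,dx\,d\tau$ by \eqref{large3}. All bounds are in terms of $\beta(\alpha)$, of $\sup_{[t-2,t]}A(\alpha,\cdot)$ (hence of $\Kth(t)$), and of the $\Psi$-norms and $G_1$ over $[t-2,t]$ (hence of $\Kmore(t)$ and $\int_{t-1}^tG_1$). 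Grouping the terms and combining the exponents as in \eqref{es6} gives \eqref{Gradp-sL}.

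\textbf{Main obstacle.} The analytic work has already been done in Proposition \ref{Prop53}, Corollary \ref{Cor5}, Theorems \ref{HIKS43}, \ref{UGcor}, and \ref{Ntheo43}; what remains is bookkeeping: tracking the exponents $\tilde s$, $z_1$, $z_2$, $z_3$, $\kappa_1$ through the algebra and checking that the auxiliary term $N_0^4((\theta T)^{-2})$ hidden in $\mathcal D$ is harmless here (it equals a constant multiple of $\sup_{[t-1,t]}\|p\|_{L^\infty}^2$ because $T=1$, $\theta=1/2$). The one genuinely delicate point is the interchange of $\limsup$ with the exponential and with products in part (i); this is legitimate precisely because $A(\alpha)<\infty$ forces the relevant quantities to be eventually bounded, and $\limsup e^{C'R(t)}=e^{C'\limsup R(t)}$ for bounded $R$ by continuity and monotonicity of the exponential.
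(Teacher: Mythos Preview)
Your proposal is correct and follows essentially the same approach as the paper: starting from \eqref{new1} with $T_0=t-1$, $T=1$, $\theta=1/2$ to reach the form \eqref{new10}, bounding $\mathcal D$ as in \eqref{Dlarge}, and then feeding in the large-time inputs \eqref{NlimI}, \eqref{b3}, \eqref{lim3} for part (i) and \eqref{NPbarI}, \eqref{b4}, \eqref{large3} for part (ii). Your explicit remark on the legitimacy of passing the $\limsup$ through the exponential and the product is a point the paper leaves implicit, but the argument is otherwise identical.
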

\begin{proof} (i) Taking the limit superior as $t\to\infty$  in \eqref{new10}  and \eqref{Dlarge} gives
  \beq\label{Grads0}
  \begin{aligned}
  &\limsup_{t\to\infty}\int_{U}|\nabla p(x,t)|^s dx dt\\
 & \le  C(1+\limsup_{t\to\infty}(\norm{p}_{L^\infty(U)}+\norm{\Psi}_{L^\infty(U)}  + \norm{\nabla p}_{L^\infty(\Gamma)} ))^{2\tilde s} \cdot (1+\limsup_{t\to\infty} \int_{t-1}^{t} \int_{U} |\nabla p|^{2-a} dxd\tau).
  \end{aligned}
  \eeq
Applying \eqref{NlimI}, \eqref{b3} and \eqref{lim3} gives
\begin{multline*}
\limsup_{t\to\infty}\int_{U} |\nabla p(x,t)|^s dx \\
\le C\Big[\Azero^{z_3}(1+\limsup_{t\to\infty} (\|\nabla \Psi(t)\|_{L^{\alpha q_1}( U\times (t-1,t))}+\|\Psi_t(t)\|_{L^{\alpha q_1}( U\times (t-1,t))}))^{z_2}\\
+\limsup_{t\to\infty}\norm{\Psi}_{L^\infty(U)} 
+\limsup_{t\to\infty}(1+\|\Psi_t\|_{L^\infty}^\frac2{2-a}+\|\nabla \Psi\|_{L^\infty}^2+ \|\nabla^2 \Psi\|_{L^\infty})\\
\cdot \exp\{C'\Azero^{z_3}(1+\limsup_{t\to\infty} \|\Psi_t(t)\|_{L^{\alpha q_1}( U\times (t-1,t))})^{z_2}
\}\Big]^{2\tilde s}
\times
 \Big (1+\Azero^\alpha +\limsup_{t\to\infty} \int_{t-1}^tG_1(\tau)d\tau\Big).
\end{multline*}
Then estimate \eqref{LsupGradp-s} follows. 

(ii)  By combining \eqref{new10}, \eqref{Dlarge} with \eqref{NPbarI}, \eqref{b4}  and \eqref{large3}, we have
\begin{multline*}
\int_{U} |\nabla p(x,t)|^s dx \\
\le C \Big\{
[1+  \beta(\alpha)^\frac 1{\alpha-2a} + \norm{A(\alpha,\cdot)}_{L^{\frac\alpha{\alpha -a}}(t-2,t) }^\frac 1{\alpha-a} \big ]^{z_3} [1+\|\nabla \Psi\|_{L^{\alpha q_1} U\times (t-2,t))}+\|\Psi_t\|_{L^{\alpha q_1} U\times (t-2,t))}]^{z_2} \\
+  \sup_{[t-1,t]} \|\Psi\|_{L^{\infty}(U)}
+ (1+\sup_{[t-2,t]}(\|\Psi_t\|_{L^\infty}^{\mu_0}+\|\nabla \Psi\|_{L^\infty}^2+ \|\nabla^2 \Psi\|_{L^\infty}))\\
\cdot \exp\{C'
[1+  \beta(\alpha)^\frac 1{\alpha-2a} + \norm{A(\alpha,\cdot)}_{L^{\frac\alpha{\alpha -a}}(t-2,t) }^\frac 1{\alpha-a} \big ]^{z_3} [1+\|\nabla \Psi\|_{L^{\alpha q_1} U\times (t-2,t))}+\|\Psi_t\|_{L^{\alpha q_1} U\times (t-2,t))}]^{z_2} \}\Big\}^{2\tilde s} \\
\cdot \Big (1+\beta(\alpha)^\frac {\alpha}{\alpha-2a} + A(\alpha,t-1)^\frac{\alpha}{\alpha-a} +\int_{t-1}^tG_1(\tau)d\tau\Big )
\end{multline*}
for all $t\ge T$ with some $T>2$.
Note that $\norm{A(\alpha,\cdot)}_{L^{\frac\alpha{\alpha -a}}(t-2,t) }^\frac 1{\alpha-a} \le 2^{1/\alpha}\sup_{[t-2,t]} A(\alpha, \tau)^{1/(\alpha-a)}$.
Then
\begin{multline*}
\int_{U} |\nabla p(x,t)|^s dx \\
\le C \Big\{
\Kth(t)^{z_3} \Kmore(t)+ \Kmore(t) \exp\{C' \Kth(t)^{z_3} [1+\|\nabla \Psi\|_{L^{\alpha q_1} U\times (t-2,t))}+\|\Psi_t\|_{L^{\alpha q_1} U\times (t-2,t))}]^{z_2} \}\Big\}^{2\tilde s} \\
\cdot \Kth(t)^\alpha \Big (1+ \int_{t-1}^tG_1(\tau)d\tau\Big ).
\end{multline*}
Therefore, inequality \eqref{Gradp-sL} follows.
\end{proof}

\begin{remark}
In case $\Psi=\Psi(x)$, we want to investigate the long-time dynamics by using the notion of global attractors, see, e.g.,  \cite{TemamDynBook,SellYouBook}. However, the independence of estimates  \eqref{LsupGradp-s} and \eqref{Gradp-sL} on the initial data does not yet prove the existence of a absorbing ball. Therefore the existence of the global attractor is still an open problem.
\end{remark}

\section{$L^\infty$-estimates for the gradient}
\label{PGradInfty}

In this section, we obtain  global $L^\infty$-estimates for the gradient of pressure.
Let $p(x,t)$ be a solution of IBVP \eqref{p:eq}.
For each $m=1,2,\ldots,n,$ denote $u_m=p_{x_m}$ and $u=(u_1,u_2,\ldots,u_n)=\nabla p$. 
We have
\beq\label{dum}
\frac{\partial u_{m}}{\partial t}=\partial_m  ( \nabla \cdot (  K(|u|)  u) )= \nabla \cdot (K(|u|) \partial_m u) +\nabla \cdot \Big[ K'(|u|) \frac{\sum_{i} u_i \partial_m u_i }{|u|} u\Big].          
\eeq
Since $\partial_i u_m=\partial_m u_i$, we have 
$\partial_m u = ( \partial_m u_1,\ldots, \partial_m u_n ) =( \partial_1 u_m, \ldots, \partial_n u_m) =
\nabla u_m$, and
$\sum_{i} u_i \partial_m u_i=\sum_{i} u_i \partial_i u_m=u\cdot \nabla u_m$.
Therefore, we rewrite \eqref{dum} as
\beq \label{um}
\frac{\partial u_{m}}{\partial t} = \nabla\cdot(K(|u|) \nabla u_m) +\nabla \cdot \Big[ K'(|u|) \frac{u \cdot \nabla u_m  }{|u|} u\Big].
\eeq
We consider this as  a linear equation for $u_m$ with variable coefficients, and write
\beq\label{uA}
\frac{\partial u_{m}}{\partial t} = \nabla\cdot \mathcal A(x,t,\nabla u_m),
\eeq
where
\beqs
\mathcal A(x,t,\xi)=K(|u(x,t)|)\xi+K'(|u(x,t)|) \frac{u(x,t) \cdot \xi }{|u|} u(x,t).
\eeqs
Using properties \eqref{Kestn} and \eqref{K-est-2}, one can verify that
\beq\label{A1}
|\mathcal A\mathcal (x,t,\xi)|\le (1+a)K(|u(x,t)|)|\xi|,
\eeq
\beq\label{A2}
\mathcal A(x,t,\xi)\cdot \xi \ge (1-a)K(|u(x,t)|)|\xi|^2.
\eeq

We will apply De Giorgi's technique to equation \eqref{um}. 
In the following, we fix a number $s_0$ such that $r=s_0$ satisfies \eqref{rcond}. 
Note that $s_0^*>2$.
We will also use $s_j$ for $j\ge 1$ to denote some exponents that depend on $s_0$ but are independent of $\alpha$.
Let 
\beq\label{s1}
s_1=(1-2/s_0^*)^{-1}>1.
\eeq

\begin{theorem}\label{GradUni} For any $T_0\ge 0$, $T>0$, and $\theta\in (0,1)$, if $t \in [T_0+\theta T,T_0+T]$ then    
\beq\label{gradInf}
\norm{\nabla p(t)}_{L^\infty(U)}\le C(1+(\theta T)^{-1})^\frac{s_1+1}{2} \lambda^\frac{s_1}2  \norm{\nabla p}_{L^2(U\times (T_0+\theta T/2,T_0+T))} +C\sup_{ [T_0+\theta T/2,T_0+T]}\|\nabla p\|_{L^\infty(\Gamma)}, 
\eeq
where 
\beq\label{lambda}
\lambda=\lambda(T_0, T,\theta) = \Big( \int_{T_0+\theta T/2}^{T_0+T} \int_U (1+|\nabla p|)^{\frac {a s_0}{2-s_0}}dx dt  \Big)^\frac{2-s_0}{s_0},
\eeq
and constant $C>0$ is independent of $T_0$, $T$, $\theta$.
\end{theorem}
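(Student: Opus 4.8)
I would run a De Giorgi level-set iteration directly on the scalar equation \eqref{uA} satisfied by each first-order derivative $u_m=p_{x_m}$. The key points are: \eqref{uA} is \emph{linear} in $\nabla u_m$ with structure \eqref{A1}--\eqref{A2}, and its spatial ellipticity constant degenerates like $(1+|\nabla p|)^{-a}$ where $|\nabla p|$ is large; this degeneracy is absorbed by using the weighted parabolic embedding of Lemma~\ref{WSobolev} with the weight $W=K(|\nabla p|)$, whose negative power is controlled by $(1+|\nabla p|)^a$ (by \eqref{K-est-3}), which is precisely how $\lambda$ from \eqref{lambda} enters.

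\textbf{Reduction to one component.} Since $\mathcal A(x,t,\cdot)$ is linear, $-u_m$ solves the same equation \eqref{uA} with the same $|u|$, hence the same \eqref{A1}--\eqref{A2}; and $|\nabla p|\le \sqrt n\,\max_m|u_m|$. So it suffices to bound $u_m$ from above on $U\times(T_0+\theta T,T_0+T)$ for a fixed $m$, then repeat with $-u_m$ and take the maximum over $m$. Set $M_b=\sup_{[T_0+\theta T/2,T_0+T]}\|\nabla p\|_{L^\infty(\Gamma)}$; for every level $k\ge M_b$ and every $t\in[T_0+\theta T/2,T_0+T]$ the truncation $(u_m-k)_+$ vanishes on $\Gamma$, which is exactly what is needed to take $\delta=0$ in Lemma~\ref{WSobolev}. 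The additive term $C\sup\|\nabla p\|_{L^\infty(\Gamma)}$ in \eqref{gradInf} will be this $M_b$ (up to the factor $\sqrt n$).

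\textbf{Energy estimate and iteration.} Fix dyadic levels $k_i=M_b+\ell(1-2^{-i})$ with $\ell>0$ to be chosen, and dyadic time cut-offs $\zeta_i(t)$ supported in $[T_0+\theta T/2,T_0+T]$ with $|\zeta_i'|\le C2^i/(\theta T)$, $\zeta_i\equiv1$ on $U\times(\tau_{i+1},T_0+T)$ where $\tau_i\uparrow T_0+\theta T$. Testing \eqref{uA} with $(u_m-k_{i+1})_+\zeta_i^2$, integrating by parts over $U$ (no boundary term) and using \eqref{A2} gives
\[
\sup_t\int_U(u_m-k_{i+1})_+^2\zeta_i^2\,dx+\int\!\!\int K(|\nabla p|)\,\big|\nabla(u_m-k_{i+1})_+\big|^2\zeta_i^2\,dx\,dt\le C\!\int\!\!\int(u_m-k_{i+1})_+^2\,\zeta_i|\zeta_i'|\,dx\,dt,
\]
so that, as $\zeta_i$ depends only on $t$, $[[(u_m-k_{i+1})_+\zeta_i]]_{2,W}^2\le C\,2^i(\theta T)^{-1}Y_i^2$ with $W=K(|\nabla p|)$ and $Y_i:=\|(u_m-k_i)_+\|_{L^2(U\times(\tau_i,T_0+T))}$. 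Now apply Lemma~\ref{WSobolev} with $r=s_0$ — so $\varrho=\varrho(s_0)=4(1-1/s_0^*)=2+2/s_1>2$ since $s_0^*>2$ — with $W=K(|\nabla p|)$, $\delta=0$, to $(u_m-k_{i+1})_+\zeta_i$; by \eqref{K-est-3}, $K(|\nabla p|)^{-s_0/(2-s_0)}\le C(1+|\nabla p|)^{as_0/(2-s_0)}$, so the level-set factor in \eqref{Wei1} is controlled by a suitable power of $\lambda$, and combining with the energy bound, H\"older from $L^\varrho$ down to $L^2$ ($\varrho>2$) and Chebyshev's inequality $|\{u_m>k_{i+1}\}\cap Q|\le (k_{i+1}-k_i)^{-2}Y_i^2$ produces a recursion
\[
Y_{i+1}\le C\,B^i\,(\theta T)^{-1/2}\,\lambda^{1/\varrho}\,\ell^{-\mu}\,Y_i^{1+\mu},\qquad \mu:=1-\tfrac2\varrho=\tfrac1{s_1+1},\quad B>1.
\]
By Lemma~\ref{multiseq} (with $m=1$), $Y_i\to0$ once $Y_0\le C\big((\theta T)^{-1/2}\lambda^{1/\varrho}\big)^{-1/\mu}\ell$, i.e. once $\ell\ge C\,(\theta T)^{-(s_1+1)/2}\,\lambda^{(s_1+1)/\varrho}\,Y_0$; here one checks $1/\mu=s_1+1$ and $(s_1+1)/\varrho=s_1/2$. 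Since $Y_0=\|(u_m-M_b)_+\|_{L^2(U\times(\tau_0,T_0+T))}\le\|\nabla p\|_{L^2(U\times(T_0+\theta T/2,T_0+T))}$, choosing $\ell$ equal to this right-hand side forces $Y_i\to0$, hence $u_m\le M_b+\ell$ on $U\times(T_0+\theta T,T_0+T)$; applying the same to $-u_m$ and to all $m$, and using $(\theta T)^{-(s_1+1)/2}\le(1+(\theta T)^{-1})^{(s_1+1)/2}$, yields \eqref{gradInf}.

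\textbf{Main obstacle.} The conceptual difficulty is the degeneracy of \eqref{uA}: a naive parabolic De Giorgi/Caccioppoli scheme cannot close because the ellipticity constant vanishes as $|\nabla p|\to\infty$. The resolution is that the degeneracy is only polynomial and is matched exactly by the weight $W=K(|\nabla p|)\sim(1+|\nabla p|)^{-a}$ in Lemma~\ref{WSobolev}; the price paid is the weighted level-set term, which under \eqref{K-est-3} becomes an integral of $(1+|\nabla p|)^{as_0/(2-s_0)}$, i.e. $\lambda$, finite thanks to the $L^s$-gradient bounds of Section~\ref{PGradS}. The remaining care is purely bookkeeping: tracking the time cut-off factor $(\theta T)^{-1/2}$ through the iteration (which exponentiates to $(1+(\theta T)^{-1})^{(s_1+1)/2}$) and verifying that the half-power of $\lambda$ delivered by the embedding exponentiates to exactly $\lambda^{s_1/2}$, using $\varrho(s_0)=2+2/s_1$ and $\mu=1/(s_1+1)$.
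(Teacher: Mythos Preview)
Your proposal is correct and follows essentially the same route as the paper's proof: De~Giorgi iteration on the linear equation \eqref{uA} for each component $u_m=p_{x_m}$, truncation at levels $k\ge M_b$ so that $(u_m-k)_+$ vanishes on $\Gamma$, a Caccioppoli estimate from the structure \eqref{A1}--\eqref{A2}, and the weighted embedding Lemma~\ref{WSobolev} with $W=K(|\nabla p|)$, $r=s_0$, $\varrho=\nu_0=2+2/s_1$ to close the recursion $Y_{i+1}\le D B^i Y_i^{1+\mu}$ with $\mu=1-2/\varrho=1/(s_1+1)$. Your exponent bookkeeping ($1/\mu=s_1+1$, $(s_1+1)/\varrho=s_1/2$) matches the paper's exactly, and the only cosmetic differences are your choice of levels $k_i=M_b+\ell(1-2^{-i})$ versus the paper's $k_i=2M_0(1-2^{-(i+1)})$ and the use of $\zeta_i^2$ rather than $\zeta_i$ as multiplier.
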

\begin{proof} Without loss of generality, assume $T_0=0$.  
Denote
$M_b=\sup_{\Gamma\times [\theta T/2,T]}|\nabla p| .$

Fix $m\in\{1,2,\ldots,n\}$.
We will show for $t\in[\theta T,T]$ that
\beq\label{grad519}
\norm{p_{x_m}(t)}_{L^\infty(U)}\le C(1+(\theta T)^{-1})^{\frac{s_1+1}2}\lambda^{\frac {s_1}2} \|p_{x_m}\|_{L^2(V\times (T_0+\theta T/2,T_0+T))}+2M_b.
\eeq

 Let $\zeta(t)$ be a cut-off function with $\zeta(t)=0$ for $t\le \theta T/2$. We define for $k\ge M_b,$ $ u_m^{(k)} =\max\{u_m-k,0\},$
then $u_m^{(k)}=0$ on $\Gamma$. 

Multiplying \eqref{uA} by $u_m^{(k)} \zeta$ and integrating over $U$, using properties \eqref{A1} and \eqref{A2}, we obtain
\beq\label{cut5}
 \frac 12\frac d{dt}\int_U |u_m^{(k)} \zeta|^2 dx
\le  \int_U |u_m^{(k)}|^2 \zeta \zeta_t dx -(1-a)\int_U  K(|u|)|\nabla u_m^{(k)} \zeta|^2  dx.
\eeq
Integrating \eqref{cut5} from $0$ to $t$ for $t\in [0,T]$, and then taking supremum in $t$ give 
\beq\label{cut6}
\max_{[0,T]}\int_U |u_m^{(k)} \zeta|^2  dx  
+C\int_0^T \int_U  K(|u|)|\nabla u_m^{(k)}|^2 \zeta dxdt\le \int_0^T\int_U |u_m^{(k)}|^2 \zeta \zeta_t dx. 
\eeq
Let 
\beq\label{nu2} 
\nu_0=4(1-1/s_0^*)>2.
\eeq
Applying Lemma~\ref{WSobolev} to function $u_m^{(k)} \zeta$ which vanishes on the boundary, weight $W =K(|u|)$ and exponents $r=s_0$, $\varrho=\varrho(s_0)=\nu_0$,  we have 
\begin{align*}
\norm{ u_m^{(k) }\zeta}_{L^{\nu_0}(Q_T)}  &\le C\Big[ \esssup_{t\in [0,T]} \norm{u_m^{(k) }\zeta }_{L^2(U)}+\Big(\int_0^T \int_U K(|u|)|\nabla (u_m^{(k) }\zeta) |^2 dx dt\Big)^\frac 1 2\Big]\\
&\quad\cdot   \Big[\int_0^T\int_{{\rm supp} \zeta} K(|u|)^{-\frac{s_0}{2-s_0}}dx  dt \Big]^{\frac {2- s_0}{\nu_0 s_0} }.
\end{align*}
Using \eqref{Kestn}, we have $K(|u|)^{-\frac{s_0}{2-s_0}}\le C(1+|u|)^{\frac{a s_0}{2-s_0}}$, hence 
\beq\label{cut7}
\norm{ u_m^{(k) }\zeta}_{L^{\nu_0}(Q_T)} \le C\lambda^{1/\nu_0} \Big[ \max_{t\in [0,T]} \int_U |u_m^{(k) }\zeta|^2 dx +\int_0^T \int_U K(|u|)|\nabla (u_m^{(k) }\zeta) |^2 dx dt\Big]^\frac12.
\eeq
By \eqref{cut7}, \eqref{cut6} and the boundedness of function $K(\cdot)$ we find that 
 \beq\label{umk-Ls*}
 \begin{aligned}
\norm{ u_m^{(k) }\zeta}_{L^{\nu_0}(Q_T)} 
\le C\lambda^{1/\nu_0} \Big(  \int_0^T\int_U |u_m^{(k)}|^2 \zeta|\zeta_t| dx\Big)^{\frac12}. 
\end{aligned}
\eeq
Let $M_0\ge M_b$. For $i\ge 0$, let $t_i=\theta T (1-1/2^{i+1})$, $k_i=2M_0(1-2^{i+1})$, and let $\mathcal Q_i$, $\zeta_i(t)$ be the same as in the proof of Theorem \ref{Ntheo42}.
For $i,j\ge 0$, define
 \beqs
A_{i,j}=\{(x,t):u_m(x,t)>k_i,\ t\in(t_j,T)\}, \quad  A_i=A_{i,i},
\eeqs

Let $Y_i=\| u_m^{(k_i)} \|_{L^2(A_{i,i})}$.
Applying \eqref{umk-Ls*} with  $k=k_{i+1}$  and $\zeta=\zeta_i$ and then using the same arguments as in the proof of Theorem \ref{Ntheo42} (see detailed calculations in Theorem 5.6 of \cite {HK1}), we obtain
$$Y_{i+1}\le D B^i Y_i^{1+\nu_3} \quad\text{for all } i\ge 0,$$
where $\nu_3=1-2/\nu_0$, $B =4$ and
$D=C(1+\frac 1{\theta T})^{1/2} \lambda^{1/\nu_0} M_0^{-\nu_3}.$

We now determine $M_0$ so that $ Y_0 \leq D^{-1/\nu_3}B^{-1/{\nu_3^2}}$.
This condition is met if
$$ M_0\ge C  \Big[\lambda^{1/\nu_0} (1+\frac 1{\theta T})^{1/2} \Big]^{1/\nu_3} Y_0=C  \lambda^{s_1/2} (1+\frac 1{\theta T})^\frac{1+s_1}{2} Y_0.$$
Since 
$
Y_0= \norm{u_m^{(k_0)}}_{L^2{(A_{0,0})}}\le \| u_m\|_{L^2(U\times (\theta T/2,T))}
$ and $M_0 \ge M_b$, it suffices to choose $M_0$ as
\beq\label{M0} 
M_0 = C \lambda^{s_1/2}(1+\frac 1{\theta T})^\frac{s_1+1}{2} \|u_m\|_{L^2(U\times (\theta T/2,T))}+M_b.
\eeq
Then Lemma \ref{multiseq} gives 
$\displaystyle{\lim_{i\to\infty}}Y_i=0$. 
Hence, 
$\int_{\theta T}^T\int_U |u_m^{(2M_0)}|^2 dxdt=0.$
Thus, $u_m(x,t)\le M_0$ a.e. in $U\times (\theta T,T)$.
Replace $u_m,u$ by $-u_m,-u$ and 
use the same argument we obtain 
\beq\label{M0um}
|u_m(x,t)|\le 2M_0 \quad  \text{a.e. in } U\times (\theta T,T). 
\eeq
By the choice of $M_0$
  we obtain from \eqref{M0um} that
\beqs
|u_m(x,t)|\le C (1+\frac 1{\theta T})^\frac{s_1+1}{2} \lambda^{s_1/2} \|u_m\|_{L^2(U\times (\theta T/2,T))}  +2M_b
\eeqs
for all $m=1,\ldots,n.$ Then \eqref{grad519} follows, and the proof is compete.
\end{proof}

We will combine Theorem \ref{GradUni} with the high integrability of $\nabla p$ in section \ref{PGradS} to obtain the $L^\infty$-estimates.  For the rest of this paper, we fix the following constants
\beq\label{s2def}
 s_2>\max\Big\{2,\frac{as_0}{2-s_0}\Big\}
 \quad\text{and}\quad
 s_3=s_1(2-s_0)/s_0+1,
\eeq
where $s_0$ and $s_1$ are as in \eqref{s1}. 
We use the same notation as in \eqref{kap1}--\eqref{bK2}, in Theorem \ref{theo55}, and also denote
\beq
\kappa_2=1+s_1+{\tilde s}_2 \kappa_1 s_3,\quad 
\kappa_3={\tilde s}_2 z_3+\alpha/2,
\eeq
\beq
\Kfo(t)= 1+\beta(\alpha)^\frac {1}{\alpha-2a} + \sup_{[t-3, t]}A(\alpha,\cdot)^\frac{1}{\alpha-a},
\eeq
\begin{multline}\label{tilK2}
\Kfv(t)=1+  \sup_{ [t-3,t]} (\|\Psi(t)\|_{L^\infty}+\|\Psi_t(t)\|_{L^\infty}^{\mu_0}+\|\nabla \Psi(t)\|_{L^\infty}^2+\|\nabla^2 \Psi(t)\|_{L^\infty})\\
+(\|\nabla \Psi\|_{L^{\alpha q_1}( U\times (t-3,t))}+\|\Psi_t\|_{L^{\alpha q_1}( U\times (t-3,t))})^{z_2}.
\end{multline}
We recall that the definition of $\tilde s$ is given by \eqref{stil}.

\begin {theorem}\label{theo57}
{\rm (i)} If $0<t\le 3$ then
\begin{multline}\label{Gradps}
\norm{\nabla p(t)}_{L^\infty(U)}
\le C t^{-\kappa_2/2}
 (1+ \|\bar{p}_0\|_{L^\alpha(U)})^{({\tilde s}_2 z_3+1)s_3} \Kzero(t)^{{\tilde s}_2 z_3 s_3} 
 \Kfirst(t)^{{\tilde s}_2 s_3}
\Big( 1+\int_0^t G_1(\tau)d\tau\Big)^{ s_3/2}\\
\cdot \exp\Big[C t^{-1/\delta_1}(1 + \|\bar{p}_0\|_{L^\alpha(U)}^{z_3})\Kzero(t)^{z_3} (1+\|\nabla \Psi\|_{L^{\alpha q_1}(U\times(0,t))}+\|\Psi_t\|_{L^{\alpha q_1}(U\times(0,t))})^{z_2}\Big].
\end{multline}

{\rm (ii)} If $t> 3$ then
\begin{multline}\label{GradpL}
\norm{\nabla p(t)}_{L^\infty(U)}
\le C(1+ \|\bar{p}_0\|_{L^\alpha(U)})^{\kappa_3 s_3} 
\Kzero(t)^{\kappa_3 s_3}
 \Kfv(t)^{{\tilde s}_2 s_3}
\Big( 1+\int_{t-2}^t G_1(\tau)d\tau\Big)^{ s_3/2}\\
\cdot\exp\Big\{C (1 + \|\bar{p}_0\|_{L^\alpha(U)}^{z_3})\Kzero(t)^{z_3}
 (1+ \|\nabla \Psi\|_{L^{\alpha q_1}( U\times (t-2,t))}+ \|\Psi_t\|_{L^{\alpha q_1}( U\times (t-2,t))})^{z_2} \Big\}.
\end{multline}
\end{theorem}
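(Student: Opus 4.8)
The plan is to combine the De Giorgi bound of Theorem~\ref{GradUni} with the high-integrability estimates of Section~\ref{PGradS}. For part~(i) I apply \eqref{gradInf} with $T_0=0$, $T=t$, $\theta=1/2$; for part~(ii) with $T_0=t-1$, $T=1$, $\theta=1/2$. In both cases the target time $t$ lies in $[T_0+\theta T,T_0+T]$, and the right-hand side of \eqref{gradInf} involves three quantities over the ``middle'' cylinder $U\times(T_0+\theta T/2,T_0+T)$: the weight $\lambda$ of \eqref{lambda}, the norm $\|\nabla p\|_{L^2}$, and $\sup\|\nabla p\|_{L^\infty(\Gamma)}$. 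The last one is controlled directly by Corollary~\ref{Cor5} (via \eqref{b1}, resp.\ \eqref{b2}), and since $\kappa_2>2\mu_0$ it will be absorbed into the leading term; the work is in $\lambda$ and $\|\nabla p\|_{L^2}$.

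To bound $\lambda$ and $\|\nabla p\|_{L^2}$ I would invoke the space--time gradient estimate \eqref{new0} of Proposition~\ref{Prop53} with $s=s_2$ --- it is essential here to use \eqref{new0} and not the pointwise-in-time bound \eqref{new1}, since the extra factor $(\theta T)^{-1}$ in \eqref{new1} would inflate the power of $t$ beyond $\kappa_2/2$. Because $\frac{a s_0}{2-s_0}\le s_2$ and $2\le s_2$, one has the pointwise inequalities $(1+|\nabla p|)^{\frac{as_0}{2-s_0}}\le C(1+|\nabla p|^{s_2})$ and $|\nabla p|^2\le C(1+|\nabla p|^{s_2})$, which reduce both quantities to $1+\int_{U\times(\cdot)}|\nabla p|^{s_2}$; then \eqref{new0} bounds this by $C(1+\mathcal D)^{\tilde s_2}\bigl(1+\int_{U\times(\cdot)}|\nabla p|^{2-a}\bigr)$, where $\mathcal D$ is as in \eqref{Ddef} and we use $\tilde s_2\ge\frac{s_2}{2-a}-1$ from \eqref{stil}. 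Raising to the relevant powers and multiplying, with $s_3=s_1(2-s_0)/s_0+1$, gives
\[
\lambda^{s_1/2}\,\|\nabla p\|_{L^2(U\times(\cdot))}\le C\,(1+\mathcal D)^{\tilde s_2 s_3/2}\Big(1+\int_{U\times(\cdot)}|\nabla p|^{2-a}\Big)^{s_3/2}.
\]

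Next, the two remaining pieces are estimated exactly as in the proof of Theorem~\ref{gradsthm}. The energy $\int|\nabla p|^{2-a}$ is handled by $|\nabla p|^{2-a}\le C(1+H(|\nabla p|))$ (from \eqref{Hcompare}) together with \eqref{t0} for part~(i) and \eqref{all1} for part~(ii). For $\mathcal D$ one splits $\|p\|_{L^\infty}\le\|\bar p\|_{L^\infty}+\|\Psi\|_{L^\infty}$ and uses the $L^\infty$-estimates \eqref{Ne2}/\eqref{Nsmall} for $\bar p$ and the boundary-gradient estimates \eqref{b1}/\eqref{b2} for $\|\nabla p\|_{L^\infty(\Gamma)}$; this reproduces the bound $\mathcal D\le C\bar{\mathcal D}$ with $\bar{\mathcal D}\le Ct^{-\kappa_1}(R_1+R_2+R_3)^2e^{C't^{-1/\delta_1}R_1}$ (resp.\ its $t$-independent analogue for $t>3$) with the same $R_1,R_2,R_3$ as there, in particular $R_2,R_3\le\mathcal G_1(t)$ and $R_1\le C(1+\|\bar p_0\|_{L^\alpha})^{z_3}\mathcal K_1(t)^{z_3}\mathcal G_1(t)$, and the analogous bounds with $\tilde{\mathcal K}_2$ for $t>3$. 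Substituting everything into \eqref{gradInf}, the power of $t$ collects to $-\tfrac{s_1+1}{2}-\tfrac{\tilde s_2\kappa_1 s_3}{2}=-\kappa_2/2$; using $\|\bar p_0\|_{L^2}\le C\|\bar p_0\|_{L^\alpha}$ (H\"older, since $\alpha\ge2$) to merge the $L^2$ and $L^\alpha$ powers of $\bar p_0$, and collecting powers of $\mathcal K_1$, $\mathcal G_1$ (resp.\ $\tilde{\mathcal K}_2$) and of $1+\int G_1$, yields \eqref{Gradps} and \eqref{GradpL}.

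The only delicate points are bookkeeping, not analysis. One must track the time windows carefully: in part~(ii) the nested applications (Theorem~\ref{GradUni} on $(t-1,t)$, then \eqref{new0}, then \eqref{Nsmall} and \eqref{b2}, where \eqref{b2} applied at a time $s$ already references data on the window $[s-2,s]$) push the data dependence back to $[t-3,t]$, which is exactly why $\tilde{\mathcal K}_2$ is defined with that window. And one must verify that after raising \eqref{new0}, the $\mathcal D$-bound and the energy bound to their respective powers, all exponents of $t$, $\|\bar p_0\|_{L^\alpha}$, $\mathcal K_1$, $\mathcal G_1$ (resp.\ $\tilde{\mathcal K}_2$) and the argument of the exponential collapse to precisely those in the statement. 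The main risk of error --- and, I expect, the real obstacle --- is the choice between \eqref{new0} and \eqref{new1}: taking \eqref{new1} would introduce a spurious extra power of $t$ and miss the claimed $\kappa_2$.
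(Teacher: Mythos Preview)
Your plan is correct and is essentially the paper's proof: both reduce \eqref{gradInf} to a bound of the form $C(1+(\theta T)^{-1})^{(1+s_1)/2}\big(\int(1+|\nabla p|)^{s_2}\big)^{s_3/2}+C\sup_\Gamma|\nabla p|$ (the paper records this as \eqref{gradmain}), then feed in the $L^{s_2}$ gradient estimates of Section~\ref{PGradS} together with Corollary~\ref{Cor5}. The one point to correct is your claim that \eqref{new1} must be avoided: the paper in fact uses the pointwise-in-time bounds \eqref{es5}/\eqref{es6} of Theorem~\ref{gradsthm}, which are built on \eqref{new1} via \eqref{new11}, and integrates them over $[t/4,t]$ (resp.\ $[t-3/4,t]$); the time-integration supplies a factor $\sim t$ (resp.\ $\sim 1$) that exactly cancels the $(\theta T)^{-1}$ from \eqref{new1}, so both your route through \eqref{new0} and the paper's route through \eqref{es5}/\eqref{es6} land on the same exponent $\kappa_2$.
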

\begin{proof}
First, we note in \eqref{gradInf}  that
\begin{align*}
&\lambda\le C\Big( \int_{T_0+\theta T/2}^{T_0+T} \int_U (1+|\nabla p|)^ {s_2} dx dt  \Big)^{\frac{2-s_0}2},\\
&\norm{\nabla p}_{L^2(U\times (T_0+\theta T/2,T_0+T))} \le C\Big( \int_{T_0+\theta T/2}^{T_0+T} \int_U (1+|\nabla p|)^ {s_2} dx dt  \Big)^{\frac12}.
\end{align*}
Hence, we have from \eqref{gradInf} that
\begin{multline}\label{gradmain}
\sup_{[T_0+\theta T,T_0+T]}\norm{\nabla p(t)}_{L^\infty(U)}\le C (1+(\theta T)^{-1})^{\frac{1+s_1}{2}} 
\Big( \int_{T_0+\theta T/2}^{T_0+T} \int_U (1+|\nabla p|)^ {s_2} dx dt  \Big)^{ s_3/2}\\
+C\sup_{[T_0+\theta T/2,T_0+T]}\|\nabla p\|_{L^\infty(\Gamma)}.
\end{multline}

(i) Let $t\in(0,3]$,  applying \eqref{gradmain} with $T_0=0$, $T=t$, and $\theta=1/2$, we obtain 
\beq\label{gradmain2}
\norm{\nabla p(t)}_{L^\infty(U)}\le C t^{-\frac{1+s_1}{2}} \Big( \int_{t/4}^t \int_U (1+|\nabla p|)^ {s_2} dx dt  \Big)^{ s_3/2}+C\sup_{[t/4,t]}\|\nabla p\|_{L^\infty(\Gamma)}.
\eeq
Using \eqref{es5}  with $s= s_2$ and \eqref{b1} we obtain
\begin{multline*} 
\norm{\nabla p(t)}_{L^\infty(U)}
\le C t^{-\frac{1+s_1}{2}}\Big\{ t\cdot t^{-1-{\tilde s}_2 \kappa_1} (1+ \|\bar{p}_0\|_{L^\alpha(U)})^{2({\tilde s}_2 z_3+1)} 
\Kzero(t)^{2{\tilde s}_2 z_3}
\Kfirst(t)^{2{\tilde s}_2}
\Big( 1+\int_0^t G_1(\tau)d\tau\Big)\\
\cdot\exp\Big[C' t^{-1/\delta_1}(1 + \|\bar{p}_0\|_{L^\alpha(U)})^{z_3} \Kzero(t)^{z_3}(1+\|\nabla \Psi\|_{L^{\alpha q_1}(U\times(0,t))}+\|\Psi_t\|_{L^{\alpha q_1}(U\times(0,t))})^{z_2}\Big]\Big\}^{ s_3/2}\\
+ C  t^{-\mu_0} \Kfirst(t) \exp\Big[C' t^{-1/\delta_1}(1 + \|\bar{p}_0\|_{L^\alpha(U)})^{z_3} \Kzero(t)^{z_3}(1+\|\nabla \Psi\|_{L^{\alpha q_1}(U\times(0,t))}+\|\Psi_t\|_{L^{\alpha q_1}(U\times(0,t))})^{z_2}\Big].
\end{multline*}
Hence,
\begin{multline} \label{gradmain3}
\norm{\nabla p(t)}_{L^\infty(U)}
\le C (t^{-\frac{1+s_1+{\tilde s}_2 \kappa_1 s_3}{2}} + t^{-\mu_0})
 (1+ \|\bar{p}_0\|_{L^\alpha(U)})^{({\tilde s}_2 z_3+1)s_3} 
 \Kzero(t)^{{\tilde s}_2 z_3 s_3} \\
\cdot  (\Kfirst(t)^{{\tilde s}_2 s_3}+\Kfirst(t))
\Big( 1+\int_0^t G_1(\tau)d\tau\Big)^{ s_3/2}\\
\cdot \exp\Big[C' t^{-1/\delta_1}(1 + \|\bar{p}_0\|_{L^\alpha(U)})^{z_3} \Kzero(t)^{z_3}(1+\|\nabla \Psi\|_{L^{\alpha q_1}(U\times(0,t))}+\|\Psi_t\|_{L^{\alpha q_1}(U\times(0,t))})^{z_2}\Big].
\end{multline}
Concerning the power of $t^{-1}$, we note that $s_3, {\tilde s}_2\ge 1$  and 
$\frac{{\tilde s}_2 \kappa_1 s_3}{2}\ge \frac{2{\tilde s}_2 \mu_0  s_3}{2}={\tilde s}_2 \mu_0s_3\ge \mu_0$. 
Then \eqref{Gradps} follows \eqref{gradmain3}.

(ii) Consider $t> 3$. Applying \eqref{gradmain} with $T_0=t-1$, $T=1$, $\theta=1/2$ yields 
 \beq\label{gradlarge}
\norm{\nabla p(t)}_{L^\infty(U)}\le C \Big( 1+\int_{t-3/4}^t \int_U |\nabla p|^{s_2} dx dt  \Big)^{ s_3/2}+C\sup_{[t-3/4,t]}\|\nabla p\|_{L^\infty(\Gamma)}.
\eeq
Thanks to \eqref{es6} with $s= s_2$, and \eqref{b2}, and also noting that $\sup_{[t-3/4,t]}\Kmore(\cdot)\le \Kfv(t)$, we obtain
 \begin{multline*}
\norm{\nabla p(t)}_{L^\infty(U)}
\le C \Big[(1 + \|\bar{p}_0\|_{L^\alpha(U)})^{2\kappa_3}
\Kzero(t)^{2\kappa_3} \Kfv(t)^{2\tilde s_2}
\Big(1+\int_{t-2}^t G_1(\tau)d\tau\Big)\\
\cdot \exp\Big\{C (1 + \|\bar{p}_0\|_{L^\alpha(U)}^{z_3})\Kzero(t)^{z_3}
 (1+ \|\nabla \Psi\|_{L^{\alpha q_1}( U\times (t-2,t))} + \|\Psi_t\|_{L^{\alpha q_1}( U\times (t-2,t))})^{z_2}\Big\}\Big]^{ s_3/2}\\
 +C \Kmore(t)  \exp\Big\{C' (1 + \|\bar{p}_0\|_{L^\alpha(U)})^{z_3}\Kzero(t)(1+ \|\nabla \Psi\|_{L^{\alpha q_1}( U\times (t-2,t))}+\|\Psi_t\|_{L^{\alpha q_1}(U\times(t-2,t))})^{z_2}\Big\}.
\end{multline*}
Then \eqref{GradpL} follows.
\end{proof}

Combining \eqref{gradlarge} with Theorem \ref{theo55}, we have the following asymptotic estimates.

\begin{theorem}\label{theo58}
{\rm (i)} If $A(\alpha)<\infty$ then  
\begin{multline}\label{ptwGradp}
\limsup_{t\to\infty}\norm{\nabla p(t)}_{L^\infty(U)}
\le C \Azero^{\kappa_3 s_3} \Amore^{{\tilde s}_2 s_3} \Afirst^{ s_3/2}\\
\cdot\exp\Big\{C' \Azero^{z_3}
 (1+ \limsup_{t\to\infty}(\|\nabla \Psi\|_{L^{\alpha q_1}(U\times{t-1,t})}+\|\Psi_t\|_{L^{\alpha q_1}(U\times{t-1,t})}))^{z_2} \Big\}.
\end{multline}

{\rm (ii)} If $\beta(\alpha)<\infty$  then there is $T>0$ such that for all $t>T$,
\begin{multline}\label{ptwGradpL}
\norm{\nabla p(t)}_{L^\infty(U)}
\le C  \Kfo(t)^{\kappa_3 s_3}  \Kfv(t)^{{\tilde s}_2 s_3} \Big (1+\int_{t-3/2}^tG_1(\tau)d\tau\Big )^{ s_3/2} \\
\cdot \exp\Big\{C'\Kfo(t)^{z_3}(1+\|\nabla \Psi(t)\|_{L^{\alpha q_1}( U\times (t-3,t))}+\|\Psi_t(t)\|_{L^{\alpha q_1}( U\times (t-3,t))})^{z_2}
\Big\}.
\end{multline}
\end{theorem}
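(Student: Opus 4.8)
The plan is to obtain both parts directly from the pointwise‑in‑time bound \eqref{gradlarge} established in the proof of Theorem \ref{theo57}, namely, for $t>3$,
\[
\norm{\nabla p(t)}_{L^\infty(U)}\le C \Big( 1+\int_{t-3/4}^t \int_U |\nabla p|^{s_2}\, dx\, d\tau  \Big)^{s_3/2}+C\sup_{[t-3/4,t]}\|\nabla p\|_{L^\infty(\Gamma)},
\]
combined with the spatial $L^{s_2}$-estimates of Theorem \ref{theo55} and the boundary estimates \eqref{b3}, \eqref{b4} of Corollary \ref{Cor5}. The first step is to convert the time-window integral of $|\nabla p|^{s_2}$ into a supremum in time: since $s_2>2$ one has $\int_{t-3/4}^t\int_U|\nabla p(x,\tau)|^{s_2}dx\,d\tau\le \tfrac34\sup_{\tau\in[t-3/4,t]}\int_U|\nabla p(x,\tau)|^{s_2}dx$, so the problem reduces to controlling $\sup_{\tau\in[t-3/4,t]}\int_U|\nabla p(x,\tau)|^{s_2}dx$ and $\sup_{\tau\in[t-3/4,t]}\|\nabla p(\tau)\|_{L^\infty(\Gamma)}$ for large $t$.

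For part (i), I would take $\limsup_{t\to\infty}$ in \eqref{gradlarge}; using $\limsup_{t\to\infty}\sup_{[t-3/4,t]}f \le \limsup_{\tau\to\infty}f$, the first term is controlled by \eqref{LsupGradp-s} applied with $s=s_2$ and the boundary term by \eqref{b3}. Substituting these in yields a sum of the form $(\text{exponential-type bound})^{s_3/2}+(\text{exponential-type boundary bound})$; since $s_3,{\tilde s}_2\ge 1$, the polynomial exponents produced by \eqref{LsupGradp-s}, after raising to the power $s_3/2$, dominate those coming from \eqref{b3}, so the additive boundary term is absorbed into the product. Collecting the powers of $\Azero$, $\Amore$, $\Afirst$ and discarding the smaller ones gives exactly \eqref{ptwGradp}, with exponents $\kappa_3 s_3$, ${\tilde s}_2 s_3$, $s_3/2$.

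For part (ii) the reasoning is identical but without passing to the limit. For $t$ larger than some $T>3$, estimate \eqref{Gradp-sL} with $s=s_2$ bounds $\int_U|\nabla p(x,\tau)|^{s_2}dx$ uniformly over $\tau\in[t-3/4,t]$, and \eqref{b4} bounds $\|\nabla p(\tau)\|_{L^\infty(\Gamma)}$ on the same window. Plugging both into \eqref{gradlarge} and invoking the monotonicity in $t$ of the auxiliary quantities — so that $\Kth(\tau)\le\Kfo(t)$, $\Kmore(\tau)\le\Kfv(t)$ and $\int_{\tau-1}^{\tau}G_1\le\int_{t-3/2}^{t}G_1$ whenever $\tau\in[t-3/4,t]$ — lets one rewrite every windowed datum in terms of $\Kfo(t)$, $\Kfv(t)$ and $\int_{t-3/2}^t G_1(\tau)d\tau$. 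A final round of elementary exponent bookkeeping (again dropping the lower powers and merging the boundary term into the product) yields \eqref{ptwGradpL}.

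The main obstacle is purely organizational: tracking which time windows survive each substitution and verifying that the displayed exponents $\kappa_3 s_3$, ${\tilde s}_2 s_3$, $s_3/2$ are indeed the largest among those generated by \eqref{LsupGradp-s}/\eqref{Gradp-sL}, by \eqref{b3}/\eqref{b4}, and by the $s_3/2$-th power in \eqref{gradlarge}. No analytic ingredient beyond Theorems \ref{GradUni} and \ref{theo55} and Corollary \ref{Cor5} is needed.
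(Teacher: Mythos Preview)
Your proposal is correct and follows essentially the same route as the paper: take \eqref{gradlarge}, feed in the $L^{s_2}$-bounds from Theorem~\ref{theo55} (\eqref{LsupGradp-s} for part (i), \eqref{Gradp-sL} for part (ii)) together with the boundary estimates \eqref{b3} and \eqref{b4}, then track the time windows and exponents to absorb the additive boundary term into the product. The paper does precisely this, and your remarks about $\Kth(\tau)\le\Kfo(t)$, $\Kmore(\tau)\le\Kfv(t)$ on $[t-3/4,t]$ and the exponent bookkeeping match the paper's treatment.
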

\begin{proof}
Taking the limit superior as $t\to\infty$ of \eqref{gradlarge}, and using estimates  \eqref{LsupGradp-s} and \eqref{b3}  give
\begin{multline*}
\limsup_{t\to\infty}\norm{\nabla p(t)}_{L^\infty(U)}
\le C \Big[\Azero^{2\kappa_3} \Amore^{2\tilde s_2} \Afirst \\
\cdot\exp\Big\{ C'\Azero^{z_3}(1+\limsup_{t\to\infty} (\|\nabla \Psi(t)\|_{L^{\alpha q_1}( U\times (t-1,t))}+\|\Psi_t(t)\|_{L^{\alpha q_1}( U\times (t-1,t))}))^{z_2}
\Big\}\Big]^{s_3/2}\\
+C\Amore  \exp\{C'\Azero^{z_3}(1+\limsup_{t\to\infty}(\|\nabla \Psi(t)\|_{L^{\alpha q_1}( U\times (t-1,t))}+ \|\Psi_t(t)\|_{L^{\alpha q_1}( U\times (t-1,t))}))^{z_2}
\}.
\end{multline*}
Then \eqref{ptwGradp} follows.

(ii) Combining \eqref{gradlarge} with estimate \eqref{Gradp-sL} for $s=s_2$,    and with estimate \eqref{b4} we have 
 \begin{multline*}
\norm{\nabla p(t)}_{L^\infty(U)}\le C\Big\{ \sup_{[t-3/4,t]} \Kth(\cdot)^{2\kappa_3}
\sup_{[t-3/4,t]} \Kmore(\cdot)^{2{\tilde s}_2}
\Big (1+\int_{t-7/4}^tG_1(\tau)d\tau\Big ) \\
\cdot \exp\{ C'\sup_{[t-3/4,t]} \Kth(\cdot)^{z_3} (1+\|\nabla \Psi(t)\|_{L^{\alpha q_1}( U\times (t-11/4,t))}+\|\Psi_t(t)\|_{L^{\alpha q_1}( U\times (t-11/4,t))})^{z_2}
\}  \Big\}^{ s_3/2}\\
+C(1+\sup_{[t-7/4,t]} (\|\Psi_t\|_{L^\infty}^\frac2{2-a}+\|\nabla \Psi\|_{L^\infty}^2+ \|\nabla^2 \Psi\|_{L^\infty}))\\
\cdot \exp\{C
(1+ \beta(\alpha)^{\frac {1}{\alpha-2a} } + \norm{A(\alpha,\cdot)}_{L^{\frac\alpha{\alpha-a}}(t-7/4,t)}^\frac{1}{\alpha-a})^{z_3} (1+\|\nabla \Psi(t)\|_{L^{\alpha q_1}( U\times (t-7/4,t))}+\|\Psi_t(t)\|_{L^{\alpha q_1}( U\times (t-7/4,t))})^{z_2}\}
\end{multline*}
for all $t\ge T$ with some $T>3$.
This leads to \eqref{ptwGradpL}.     
\end{proof}

\section{$L^\infty$-estimates for the time derivative}
\label{PTSec}
We now estimate the $L^\infty$-norm of the pressure's time derivative.
Let $p(x,t)$ be a solution of IBVP \eqref{p:eq}, and let $q= p_t$. Then
\beq\label{eqt} \frac{\partial   q}{\partial t}=\nabla \cdot \big(K(|\nabla p|)\nabla  p\big)_t .\eeq 

\begin{proposition} \label{ptInf}  If $T_0\ge 0$, $T>0$ and $\theta\in(0,1)$, then
\beq\label{ptbound}
\sup_{[T_0+\theta T,T_0+T]}\| p_t\|_{L^\infty(U)}\le C\lambda^\frac{s_1}{2} (1+(\theta T)^{-1} )^\frac{s_1+1}{2} \| p_t\|_{L^2(U\times (T_0+\theta T/2,T_0+T))}+\max_{\Gamma\times [T_0,T_0+T]} |\psi_t|,
\eeq
where $s_1$ and $\lambda=\lambda(T_0,T,\theta)$ are defined in Theorem \ref{GradUni},
and constant $C>0$ is independent of $T_0$, $T$, and $\theta$.
\end{proposition}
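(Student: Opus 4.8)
The plan is to mimic the proof of Theorem \ref{GradUni} almost verbatim, since equation \eqref{eqt} for $q=p_t$ is structurally the same type of linear equation for $q$ as \eqref{um} is for $u_m$. First I would differentiate the weak formulation of \eqref{p:eq} in $t$ to obtain a linear parabolic equation for $q$ of the form $q_t=\nabla\cdot\mathcal B(x,t,\nabla q)$, where $\mathcal B$ involves $K(|\nabla p|)$ and $K'(|\nabla p|)|\nabla p|^{-1}$ times rank-one factors built from $\nabla p$; by \eqref{Kestn} and \eqref{K-est-2} this operator satisfies the same one-sided bounds $|\mathcal B(x,t,\xi)|\le (1+a)K(|\nabla p|)|\xi|$ and $\mathcal B(x,t,\xi)\cdot\xi\ge (1-a)K(|\nabla p|)|\xi|^2$ as in \eqref{A1}--\eqref{A2}. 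One must note that $q=\psi_t$ on $\Gamma\times(0,\infty)$, so with $M_b=\max_{\Gamma\times[T_0,T_0+T]}|\psi_t|$ the truncation $q^{(k)}=\max\{q-k,0\}$ vanishes on $\Gamma$ for $k\ge M_b$ (and symmetrically for $-q$).

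Next, without loss of generality take $T_0=0$. I would pick a cut-off $\zeta(t)$ with $\zeta=0$ for $t\le\theta T/2$, test the equation for $q$ with $q^{(k)}\zeta$, integrate over $U$, and use the ellipticity bound to get the energy inequality
\beqs
\max_{[0,T]}\int_U |q^{(k)}\zeta|^2\,dx + C\int_0^T\int_U K(|\nabla p|)|\nabla q^{(k)}|^2\zeta\,dx\,dt\le \int_0^T\int_U |q^{(k)}|^2\zeta|\zeta_t|\,dx\,dt,
\eeqs
exactly as \eqref{cut6}. Then I would apply the weighted Sobolev embedding Lemma \ref{WSobolev} with weight $W=K(|\nabla p|)$, $r=s_0$, $\varrho=\nu_0=4(1-1/s_0^*)$, using $K(|\nabla p|)^{-s_0/(2-s_0)}\le C(1+|\nabla p|)^{as_0/(2-s_0)}$ from \eqref{Kestn} to recognize the quantity $\lambda=\lambda(T_0,T,\theta)$ from \eqref{lambda}; this yields
$\|q^{(k)}\zeta\|_{L^{\nu_0}(Q_T)}\le C\lambda^{1/\nu_0}\big(\int_0^T\int_U |q^{(k)}|^2\zeta|\zeta_t|\,dx\,dt\big)^{1/2}$, the analogue of \eqref{umk-Ls*}.

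Then comes the De Giorgi iteration: set $M_0\ge M_b$, $t_i=\theta T(1-1/2^{i+1})$, $k_i=2M_0(1-2^{-(i+1)})$, cut-offs $\zeta_i$ with $|\zeta_{i,t}|\le C2^i/(\theta T)$, and $Y_i=\|q^{(k_i)}\|_{L^2(A_{i,i})}$ on the super-level sets $A_{i,i}$. The level-set manipulations (Chebyshev to bound $|A_{i+1,i}|$ and Hölder to pass from $L^{\nu_0}$ back to $L^2$) are identical to those in Theorem \ref{Ntheo42} and Theorem 5.6 of \cite{HK1}, producing $Y_{i+1}\le D B^i Y_i^{1+\nu_3}$ with $\nu_3=1-2/\nu_0$, $B=4$, and $D=C(1+1/(\theta T))^{1/2}\lambda^{1/\nu_0}M_0^{-\nu_3}$. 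By Lemma \ref{multiseq} it suffices to choose $M_0=C\lambda^{s_1/2}(1+1/(\theta T))^{(s_1+1)/2}\|q\|_{L^2(U\times(\theta T/2,T))}+M_b$ (using $Y_0\le\|q\|_{L^2(U\times(\theta T/2,T))}$ and the identity $\nu_3^{-1}=s_1$, $\nu_0\nu_3^{-1}=\cdots$ to rewrite exponents), which forces $\lim Y_i=0$, hence $q\le M_0$ a.e. on $U\times(\theta T,T)$; repeating with $-q$ gives $|q|\le 2M_0$, which is \eqref{ptbound} after absorbing constants.

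The only genuinely new point — and the step I expect to require the most care — is the differentiation of the equation in time and the verification that the resulting coefficient operator $\mathcal B$ genuinely satisfies the structure conditions \eqref{A1}--\eqref{A2} with the same weight $K(|\nabla p|)$; this uses the assumed regularity of $p$ (so that $p_t$, $\nabla p_t$ exist in the required sense and the product rule as in Lemma 3.6 of \cite{HKP1} applies) together with the algebraic inequalities \eqref{Kestn} and \eqref{K-est-2}. Once that is in place, everything else is a transcription of the argument for Theorem \ref{GradUni} with $u_m$ replaced by $p_t$ and $M_b$ reinterpreted as $\max_{\Gamma\times[T_0,T_0+T]}|\psi_t|$.
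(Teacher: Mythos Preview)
Your proposal is correct and follows essentially the same route as the paper. The paper does not package the time-differentiated equation as $q_t=\nabla\cdot\mathcal B(x,t,\nabla q)$ abstractly but instead expands $(K(|\nabla p|)\nabla p)_t$ directly and bounds the $K'$-term via \eqref{K-est-2}; since $(\nabla p)_t=\nabla q$, your operator $\mathcal B$ is literally the same $\mathcal A$ from \eqref{A1}--\eqref{A2}, so the structure verification you flag as the delicate step is immediate, and from there both arguments coincide (energy inequality, Lemma \ref{WSobolev} with weight $K(|\nabla p|)$, De Giorgi iteration as in Theorem \ref{GradUni}).
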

\begin{proof} Without loss of generality, assume $T_0=0$.
For $k\ge \max_{\Gamma\times [0,T]} |p_t| = \max_{\Gamma\times [0,T]} |\psi_t|$ , let $ q^{(k)}=\max\{ q-k,0\}$
and $ S_k(t)=\{x \in U: q(x,t)>k\}$,
and $\chi_k(x,t)$  be the characteristic function of set $\{(x,t) \in U\times(0,T):  q(x,t)>k\}$.
On $S_k(t)$,  we have $(\nabla p)_t=\nabla q=\nabla q^{(k)}$. 

Let $\zeta=\zeta(t)$ be the cut-off function satisfying $\zeta(0)=0$. We will use test function $ q^{(k)}\zeta^2$, noting that $\nabla ( q^{(k)}\zeta^2) =\zeta^2 \nabla  q^{(k)}$. Multiplying \eqref{eqt} by $ q^{(k)}\zeta^2 $ and integrating the resultant on $U$, we get 
\begin{align*}
\frac 12\ddt \int_U |q^{(k)}\zeta|^2 dx & 
= \int_U |q^{(k)}|^2 \zeta \zeta_t  dx - \int_U (K(|\nabla p|))_t \nabla p \cdot \nabla q^{(k)}\zeta^2  dx - \int_U K(|\nabla p|) (\nabla p)_t \cdot  \nabla q^{(k)}\zeta^2 dx. 
\end{align*}
Note that 
$
(\nabla p)_t \cdot \nabla q^{(k)}\zeta^2= |\nabla (q^{(k)}\zeta)|^2.
$
Taking into account \eqref{K-est-2},   
\begin{align*}
|(K(|\nabla p|))_t\nabla p \cdot \nabla q^{(k)}\zeta^2|\
&=|K'(|\nabla p|)| \frac{|\nabla p\cdot \nabla p_t|}{|\nabla p|} |\nabla p \cdot \nabla q^{(k)}\zeta^2| 
\le a K(|\nabla p|) |\nabla q||\nabla q^{(k)}\zeta^2|.
\end{align*}
Note that  
$
|\nabla q| |\nabla q^{(k)}\zeta^2|= |\nabla (q^{(k)}\zeta)|^2.
$
It follows that
\begin{align*}
\ddt \int_U |q^{(k)} \zeta|^2 dx &+ (1-a) \int_U K(|\nabla p|) |\nabla (q^{(k)} \zeta)|^2 dx   \le 2\int_U |q^{(k)}|^2 \zeta  |\zeta_t| dx .
\end{align*}
Integrating this inequality from $0$ to $T$, we obtain 
\begin{align*}
 \max_{[0,T]} \int_U |q^{(k)}\zeta |^2 dx &+ \int_0^T \int_U K(|\nabla p|) |\nabla (q^{(k)}\zeta) |^2 dx dt \le C\int_0^T  \int_U |q^{(k)}|^2 \zeta |\zeta_t| dxdt .
 \end{align*}
 The last inequality uses the fact that function $K(\cdot)$ is bounded above.
 Applying Lemma \ref{WSobolev} to $q^{(k)} \zeta$ with $W=K(|\nabla p|)$ and $\varrho=\nu_0$ defined by \eqref{nu2}, we have 
  \begin{align*}
&   \norm{q^{(k)}\zeta}_{L^{\nu_0}(Q_T)}
 \le C \lambda^{1/\nu_0} 
  \cdot \left\{\max_{[0,T]} \int_U |q^{(k)}\zeta |^2 dx + \int_0^T \int_U K(|\nabla p|)|\nabla (q^{(k)}\zeta)|^2 dx dt \right\}^{1/2}.
 \end{align*} 
 and hence, 
  \beq\label{qzetabound}
  \begin{split}
  \norm{q^{(k)}\zeta}_{L^{\nu_0}(Q_T)} &\le C\lambda^{1/\nu_0}  \Big(\int_0^T  \int_U  |q^{(k)}|^2 |\zeta_t|\zeta dx dt \Big)^{1/2}.
   \end{split}
   \eeq
This is similar to inequality \eqref{umk-Ls*}. Then by following arguments of Theorem~\ref{GradUni} applied for $q^{(k)}$ instead of $u_m^{(k)}$, we obtain the estimate \eqref{ptbound}.    We omit the details.
\end{proof}

In the following estimates of $p_t$ we use the same notation as in sections \ref{PGradS} and \ref{PGradInfty}, particularly, \eqref{kap1}--\eqref{bK2},  \eqref{tilA1}, \eqref{tilA2}, \eqref{s2def}--\eqref{tilK2},  and also define new numbers
\begin{align*}
\kappa_4&=1+s_1+{\tilde s}_2 \kappa_1 (s_3-1),\quad
\kappa_5=({\tilde s}_2 z_3 +1)(s_3-1)+1,\\
\kappa_6&=({\tilde s}_2 z_3+\alpha/2)(s_3-1)+\alpha/2=\kappa_3(s_3-1)+\alpha/2.
\end{align*}

\begin{theorem}\label{ptthm}
 {\rm (i)} If $0<t\le 3$ then
 \begin{multline}\label{i66}
  \| p_t(t)\|_{L^\infty(U)}
  \le C  t^{-\kappa_4/2} \Big(1+\|\bar{p}_0\|_{L^\alpha}\Big)^{\kappa_5} \Big(1+ \int_U H(|\nabla p_0(x)|)dx \Big)^{1/2} \\
\cdot \Kzero(t)^{{\tilde s}_2 (s_3-1)z_3}\Kfirst(t)^{{\tilde s}_2 (s_3-1)} 
\Big(1+\int_0^t G_3(\tau)d\tau)\Big)^{s_3/2} \\
 \cdot \exp\Big\{C t^{-1/\delta_1}(1 + \|\bar{p}_0\|_{L^\alpha(U)})^{z_3} \Kzero(t)^{z_3}(1+\|\nabla \Psi\|_{L^{\alpha q_1}(U\times(0,t))}+\|\Psi_t\|_{L^{\alpha q_1}(U\times(0,t))})^{z_2}\Big\}
 +\sup_{ [0,t]} \|\psi_t\|_{L^\infty(\Gamma)}.
 \end{multline}
 
{\rm (ii)}  If $t>3$ then
\begin{multline}\label{i67}
\| p_t(t)\|_{L^\infty(U)}
\le C (1+\|\bar p_0\|_{L^\alpha})^{\kappa_6}
\cdot  \Kzero(t)^{\kappa_6} \Kfv(t)^{{\tilde s}_2 (s_3-1)} \Big( 1+\int_{t-2}^t G_3(\tau)d\tau\Big)^{s_3/2}\\
\cdot \exp\Big\{C' (1 + \|\bar{p}_0\|_{L^\alpha(U)}^{z_3})\Kzero(t)^{z_3}
 (1+ \|\nabla \Psi\|_{L^{\alpha q_1}( U\times (t-3,t))}+ \|\Psi_t\|_{L^{\alpha q_1}( U\times (t-3,t))})^{z_2}\Big\}
  +\sup_{ [t-1,t]} \|\psi_t\|_{L^\infty(\Gamma)}.
\end{multline}
\end{theorem}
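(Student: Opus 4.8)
The plan is to feed the high-integrability gradient bounds from Section~\ref{PGradS} into the $L^\infty$-estimate for $p_t$ from Proposition~\ref{ptInf}, exactly as Theorem~\ref{theo57} fed those bounds into Theorem~\ref{GradUni}. Concretely, in \eqref{ptbound} the quantity $\lambda=\lambda(T_0,T,\theta)$ is controlled (as in the proof of Theorem~\ref{theo57}) by $\big(\int\!\int (1+|\nabla p|)^{s_2}\big)^{(2-s_0)/2}$, using that $s_2>as_0/(2-s_0)$; together with H\"older on $\|p_t\|_{L^2}$ this turns \eqref{ptbound} into a bound of the schematic form
\beqs
\sup_{[T_0+\theta T,T_0+T]}\|p_t\|_{L^\infty}\le C(1+(\theta T)^{-1})^{(1+s_1)/2}\Big(\int_{T_0+\theta T/2}^{T_0+T}\!\!\!\int_U(1+|\nabla p|)^{s_2}dxdt\Big)^{(s_3-1)/2}\Big(\int_{T_0+\theta T/2}^{T_0+T}\!\!\!\int_U|p_t|^2 dxdt\Big)^{1/2}+\max_{\Gamma\times[T_0,T_0+T]}|\psi_t|,
\eeqs
where $s_3-1=s_1(2-s_0)/s_0$. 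The extra factor $\|p_t\|_{L^2}$ (compared with the $\nabla p$ case, which had an exponent $s_3/2$ purely on the gradient integral) is precisely what produces the modified exponents $\kappa_4,\kappa_5,\kappa_6$ and the new data-term $\int_0^t G_3$ (rather than $\int_0^tG_1$) and the new initial term $\int_U H(|\nabla p_0|)dx$.

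First I would treat part~(i): apply the displayed inequality with $T_0=0$, $T=t\le3$, $\theta=1/2$, so that the integration window becomes $[t/4,t]$. Bound the $\int\!\int(1+|\nabla p|)^{s_2}$ factor by Theorem~\ref{gradsthm}(i), i.e.\ estimate $\int_{t/4}^t\int_U|\nabla p|^{s_2}dx$ by integrating \eqref{es5} in time over $[t/4,t]$ (using that the right-hand side of \eqref{es5} is, up to powers of $t$, increasing, and absorbing the factor $t$ from the length of the interval into the $t^{-1-\tilde s_2\kappa_1}$ factor). Bound the $\int_{t/4}^t\int_U|p_t|^2$ factor by \eqref{all2}; note \eqref{all2} is stated for $t\ge1$, so for $0<t\le3$ one instead uses the time-integrated inequality \eqref{intpt}, which directly gives $\int_0^t\int_U|\bar p_t|^2\le \int_U[H(|\nabla p_0|)+\bar p_0^2]dx+C\int_0^tG_3$, and then converts $\bar p_t$ to $p_t$ by adding $\|\Psi_t\|_{L^2}^2$, absorbed into $\Kfirst(t)$. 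Finally bound $\max_{\Gamma\times[0,t]}|\psi_t|\le\sup_{[0,t]}\|\psi_t\|_{L^\infty(\Gamma)}$, collect all $\Psi$-dependent quantities into $\Kfirst(t)$, all initial-data quantities into $(1+\|\bar p_0\|_{L^\alpha})$ and $(1+\int_U H(|\nabla p_0|))^{1/2}$ via H\"older to merge the various powers of $\|\bar p_0\|_{L^\alpha}$ and $\|\bar p_0\|_{L^2}$, and verify the arithmetic $1+s_1+\tilde s_2\kappa_1(s_3-1)=\kappa_4$, $(\tilde s_2 z_3+1)(s_3-1)+1=\kappa_5$. One must also check that the exponent of $t^{-1}$ coming from $t^{-(1+s_1)/2}$ times $(t^{-1-\tilde s_2\kappa_1})^{(s_3-1)/2}$ times the $t^{1/2}$ from the length $t/4$ interval in the $L^2$ factor equals $-\kappa_4/2$, and that the boundary term $\sup_{[0,t]}\|\psi_t\|_{L^\infty(\Gamma)}$ is additive (no negative power of $t$ needed on it since it is already order-one data).

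Part~(ii) is the same scheme with $T_0=t-1$, $T=1$, $\theta=1/2$, window $[t-3/4,t]$: use Theorem~\ref{gradsthm}(ii) / \eqref{es6} for the $\int\!\int(1+|\nabla p|)^{s_2}$ factor (noting $\sup_{[t-3/4,t]}\Kmore(\cdot)\le\Kfv(t)$ and that \eqref{es6} requires $t>2$, which is implied by $t>3$), use \eqref{all2} directly for the $\int_{t-1/2}^t\int_U\bar p_t^2$ factor, convert $\bar p_t\to p_t$ by adding $\|\Psi_t\|_{L^2}^2\le\Kfv(t)$, and bound the boundary term by $\sup_{[t-1,t]}\|\psi_t\|_{L^\infty(\Gamma)}$; then absorb and check $\kappa_6=\kappa_3(s_3-1)+\alpha/2$ and the exponent $\tilde s_2(s_3-1)$ on $\Kfv(t)$. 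The main obstacle I anticipate is purely bookkeeping rather than conceptual: carefully propagating the two multiplicative factors (the $L^{s_2}$-gradient integral raised to $(s_3-1)/2$ and the $L^2$-$p_t$ integral raised to $1/2$) through \eqref{es5}/\eqref{es6} and \eqref{intpt}/\eqref{all2} and then merging all the resulting powers of $(1+\|\bar p_0\|_{L^\alpha})$, $\Kzero(t)$, $\Kfirst(t)$ (resp.\ $\Kfv(t)$), and the two exponential factors — the $\exp$ from the gradient estimate is raised to the power $(s_3-1)/2$ and must be compared with (and dominated by) the $\exp$ written in the statement with exponent $1$; this works because $s_3-1<2$, i.e.\ $(s_3-1)/2<1$, and the extra room is absorbed by the generic constant $C$. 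There is no genuinely hard analytic step here since the two De Giorgi iterations and all the embeddings have already been carried out in Sections~\ref{PGradS}–\ref{PGradInfty}.
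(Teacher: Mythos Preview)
Your proposal is correct and follows essentially the same route as the paper: apply Proposition~\ref{ptInf} with the same choices of $T_0,T,\theta$, bound $\lambda^{s_1/2}$ via H\"older/Young by $(\int\!\int(1+|\nabla p|)^{s_2})^{(s_3-1)/2}$, then invoke \eqref{es5}/\eqref{es6} for the gradient factor and \eqref{intpt}/\eqref{all2} for the $\|p_t\|_{L^2}$ factor, and collect. One small correction: your justification for absorbing the exponential, namely ``$(s_3-1)/2<1$'', is unnecessary and in general false (as $s_0\downarrow 2n/(n+2)$ one has $s_1\to\infty$, hence $s_3\to\infty$); the correct reason is simply that $\big(\exp\{C'X\}\big)^{(s_3-1)/2}=\exp\{C'(s_3-1)X/2\}$, and $C'(s_3-1)/2$ is just another generic constant. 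Also, the extra $\int\!\int|\Psi_t|^2$ coming from converting $\bar p_t\to p_t$ is more naturally absorbed into $\int G_3$ (since $G_3\ge G_2\ge\int_U|\Psi_t|^2$) than into $\Kfirst(t)$.
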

\begin{proof}
{\rm (i)} Let $t\in(0,3]$. Applying \eqref{ptbound} with $T_0=0$, $T=t$ and $\theta=1/2$ gives
\beq \label{pt1}
\| p_t(t)\|_{L^\infty}
\le C\lambda^\frac{s_1}{2}  t^{-\frac{1+s_1}{2}} \| p_t\|_{L^2(U\times (0,t))}+\sup_{ [0,t]} \|\psi_t\|_{L^\infty},
\eeq
where $\lambda$ is defined by \eqref{lambda}. By Young's inequality, we have
\begin{align*}
\lambda^{s_1/2} & = \Big( \int_{t/4}^{t} \int_U (1+|\nabla p|)^{\frac {a s_0}{2-s_0}}dx d\tau  \Big)^\frac{\nu_1}{2}
\le \Big( \int_{t/4}^{t} \int_U (1+|\nabla p|)^{s_2} dx d\tau  \Big)^\frac{\nu_1}{2},
\end{align*}
where $\nu_1=(2-s_0)s_1/s_0= s_3-1$.
Combining with \eqref{es5} for $s=\tilde s_2$ yields
\begin{multline*}
\lambda^{s_1/2}\le C\Big[ t\cdot t^{-1-{\tilde s}_2 \kappa_1} (1 + \|\bar{p}_0\|_{L^\alpha(U)})^{2({\tilde s}_2 z_3+1)} 
\Kzero(t)^{2{\tilde s}_2 z_3}
\Kfirst(t)^{2{\tilde s}_2}
\Big( 1+\int_0^t G_1(\tau)d\tau\Big)\\
\cdot \exp\Big\{C' t^{-1/\delta_1}(1 + \|\bar{p}_0\|_{L^\alpha(U)})^{z_3} \Kzero(t)^{z_3} (1+\|\nabla \Psi\|_{L^{\alpha q_1}(U\times(0,t))}+\|\Psi_t\|_{L^{\alpha q_1}(U\times(0,t))})^{z_2}\Big\}\Big]^{\nu_1/2}.
\end{multline*}
Combining this with \eqref{pt1} and \eqref{intpt} gives
\begin{align*}
& \| p_t(t)\|_{L^\infty}
\le C t^{-\frac{1+s_1+{\tilde s}_2 \kappa_1\nu_1}{2}}(1+\|\bar p_0\|_{L^\alpha})^{({\tilde s}_2 z_3+1)\nu_1} 
 \Kzero(t)^{{\tilde s}_2 z_3\nu_1}
 \Kfirst(t)^{{\tilde s}_2 \nu_1} \cdot\Big( 1+\int_0^t G_1(\tau)d\tau\Big)^{\nu_1/2}\\
 &\quad \cdot \exp\Big\{C' t^{-1/\delta_1}(1 + \|\bar{p}_0\|_{L^\alpha(U)})^{z_3} \Kzero(t)^{z_3}(1+\|\nabla \Psi\|_{L^{\alpha q_1}(U\times(0,t))}+\|\Psi_t\|_{L^{\alpha q_1}(U\times(0,t))})^{z_2}\Big\}\\
&\quad \cdot\Big( \int_U [H(|\nabla p_0(x)|)+\bar p^2_0(x)] dx +\int_0^t G_3(\tau)d\tau +\int_0^t \int_U |\Psi_t(x,\tau)|^2dxd\tau\Big)^{1/2}  +\sup_{ [0,t]} \|\psi_t\|_{L^\infty}.
\end{align*}
Therefore,
\begin{align*}
& \| p_t(t)\|_{L^\infty}
  \le C  t^{-\kappa_4/2} \Big(1+\|\bar{p}_0\|_{L^\alpha}\Big)^{({\tilde s}_2 z_3+1)\nu_1+1} \Big(1+ \int_U H(|\nabla p_0(x)|)dx \Big)^{1/2}\\
&\quad \cdot   \Kzero(t)^{{\tilde s}_2 z_3\nu_1}\Kfirst(t)^{{\tilde s}_2 \nu_1} 
\Big(1+\int_0^t G_3(\tau)d\tau)\Big)^{(\nu_1+1)/2} \\
&\quad \cdot \exp\Big\{C t^{-1/\delta_1}(1 + \|\bar{p}_0\|_{L^\alpha(U)})^{z_3} \Kzero(t)^{z_3}(1+\|\nabla \Psi\|_{L^{\alpha q_1}(U\times(0,t))}+\|\Psi_t\|_{L^{\alpha q_1}(U\times(0,t))})^{z_2}\Big\}
 +\sup_{ [0,t]} \|\psi_t\|_{L^\infty}.
\end{align*}
Thus, we obtain \eqref{i66}.

{\rm (ii)} Now consider $t> 3$. Applying \eqref{ptbound} with $T_0=t-1$, $T=1$ and $\theta=1/2$, we obtain
\beq \label{pt2}
\| p_t(t)\|_{L^\infty}
\le C\lambda^\frac{s_1}{2}  \| p_t\|_{L^2(U\times (t-3/4,t))}+\sup_{ [t-1,t]} \|\psi_t\|_{L^\infty},
\eeq
where $\lambda= \Big( \int_{t-3/4}^t \int_U (1+|\nabla p|)^{\frac {a s_0}{2-s_0}}dx dt  \Big)^\frac{2-s_0}{s_0}$.
Then by Young's inequality  and \eqref{es6} for $s=\tilde s_2$,
\begin{align*}
\lambda^{s_1/2}
&\le C \Big( \int_{t-1}^{t} \int_U (1+|\nabla p|)^ {s_2} dx d\tau  \Big)^{\nu_1/2}\\
&\le C (1+\|\bar p_0\|_{L^\alpha})^{\kappa_3\nu_1} \Kzero(t)^{\kappa_3\nu_1} \Kfv(t)^{{\tilde s}_2 \nu_1}
\cdot \Big( 1+\int_{t-2}^t G_1(\tau)d\tau\Big)^{\nu_1/2}\\
&\quad \cdot \exp\Big\{C' (1 + \|\bar{p}_0\|_{L^\alpha(U)})^{z_3}\Kzero(t)^{z_3} 
 (1+ \|\nabla \Psi\|_{L^{\alpha q_1}( U\times (t-3,t))}+ \|\Psi_t\|_{L^{\alpha q_1}( U\times (t-3,t))})^{z_2}\Big\}.
\end{align*}
Combining this with \eqref{pt2} and \eqref{all2} gives
\begin{align*}
 &\| p_t(t)\|_{L^\infty(U)}
 \le C (1+\|\bar p_0\|_{L^\alpha})^{\kappa_3\nu_1} \Kzero(t)^{\kappa_3\nu_1} \Kfv(t)^{{\tilde s}_2 \nu_1}
\cdot \Big( 1+\int_{t-2}^t G_1(\tau)d\tau\Big)^{\nu_1/2}\\
&\quad \cdot \exp\Big\{C' (1 + \|\bar{p}_0\|_{L^\alpha(U)})^{z_3}\Kzero(t)^{z_3}
 (1+ \|\nabla \Psi\|_{L^{\alpha q_1}( U\times (t-3,t))}+ \|\Psi_t\|_{L^{\alpha q_1}( U\times (t-3,t))})^{z_2}\Big\}\\
&\quad  \cdot \Big(1+ \int_U|\bar{p}_0(x)|^{\alpha} dx
+ [ Env A(\alpha,t)]^\frac{\alpha}{\alpha-a} + \int_{t-2}^tG_3(\tau)d\tau+\int_{t-2}^t \int_U |\Psi_t|^2dxd\tau\Big )^{1/2} +\sup_{ [t-1,t]} \|\psi_t\|_{L^\infty}.
\end{align*}
Using the facts $G_1(t)\le G_3(t)$ and $\int_U |\Psi_t(x,t)|^2 dx\le G_3(t)$,  we infer that  
\begin{multline*}
\| p_t(t)\|_{L^\infty(U)}
\le C (1+\|\bar p_0\|_{L^\alpha})^{\kappa_3\nu_1+\alpha/2}
\Kzero(t)^{\kappa_3\nu_1+\alpha/2}\Kfv(t)^{{\tilde s}_2 \nu_1} \Big( 1+\int_{t-2}^t G_3(\tau)d\tau\Big)^{\nu_1/2+1/2}\\
\cdot \exp\Big\{C' (1 + \|\bar{p}_0\|_{L^\alpha(U)})^{z_3}\Kzero(t)^{z_3}
 (1+ \|\nabla \Psi\|_{L^{\alpha q_1}( U\times (t-3,t))}+ \|\Psi_t\|_{L^{\alpha q_1}( U\times (t-3,t))})^{z_2} \Big\}
+\sup_{ [t-1,t]} \|\psi_t\|_{L^\infty},\end{multline*}
and obtain \eqref{i67}. The proof is complete.
\end{proof}

For large time or asymptotic estimates, we have the following.

\begin{theorem}\label{ptlimthm}

 {\rm (i)} If $ A(\alpha)<\infty$ then
\begin{multline}\label{lim7} 
\limsup_{t\to\infty} \| p_t(t)\|_{L^\infty(U)}
 \le C \Azero^{\kappa_6}  \Amore^{{\tilde s}_2(s_3-1)}\Ath^{s_3/2} \\
 \cdot\exp\Big\{ C'\Azero^{z_3}(1+\limsup_{t\to\infty}( \|\nabla \Psi\|_{L^{\alpha q_1}( U\times (t-1,t))}+ \|\Psi_t(t)\|_{L^{\alpha q_1}( U\times (t-1,t))}))^{z_2}\Big\} 
+\limsup_{t\to\infty} \|\psi_t\|_{L^\infty(\Gamma)},
\end{multline}
where
$\Ath=1+ \limsup_{t\to\infty} \int_{t-1}^tG_3(\tau)d\tau.$

{\rm (ii)} If $\beta(\alpha)<\infty$ then there is $T>0$ such that for all $t>T$,
 \beq\label{large7}
\begin{aligned}
 & \| p_t(t)\|_{L^\infty(U)}
 \le C \Kfo(t)^{\kappa_6}  \Kfv(t)^{{\tilde s}_2 (s_3-1)} \Big(   1+\int_{t-2}^tG_3(\tau)d\tau\Big)^{s_3/2}\\
 &\quad  \cdot\exp\Big\{  C'\Kfo(t)^{z_3}(1+ \|\nabla \Psi\|_{L^{\alpha q_1}( U\times (t-3,t))}+ \|\Psi_t(t)\|_{L^{\alpha q_1}( U\times (t-3,t))})^{z_2}
\Big\}+\max_{ [t-1,t]} \|\psi_t\|_{L^\infty(\Gamma)}.
\end{aligned}
\eeq
 \end{theorem}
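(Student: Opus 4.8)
The plan is to read these asymptotic bounds off the time-local estimate of Proposition~\ref{ptInf} by following verbatim the structure of the proof of Theorem~\ref{ptthm}(ii), but passing to the limit superior and feeding in the \emph{asymptotic} gradient estimates of Theorem~\ref{theo55} in place of the finite-time ones of Theorem~\ref{gradsthm}. No analytic ingredient is needed beyond Proposition~\ref{ptInf}, Theorem~\ref{theo55}, and the energy bounds \eqref{lim4}, \eqref{large4}; the work is entirely in the bookkeeping.

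For part~(i): apply \eqref{ptbound} with $T_0=t-1$, $T=1$, $\theta=1/2$, which gives
\beqs
\| p_t(t)\|_{L^\infty(U)}\le C\lambda^{s_1/2}\, \| p_t\|_{L^2(U\times(t-3/4,t))}+\sup_{[t-1,t]}\|\psi_t\|_{L^\infty(\Gamma)},
\eeqs
with $\lambda=\lambda(t-1,1,1/2)$ as in \eqref{lambda}. Since $as_0/(2-s_0)<s_2$, Young's inequality bounds $\lambda^{s_1/2}$ by $C\big(\int_{t-3/4}^t\int_U(1+|\nabla p|)^{s_2}\,dxd\tau\big)^{\nu_1/2}$ with $\nu_1=s_3-1$, and the time integral is at most $\sup_{[t-3/4,t]}\int_U(1+|\nabla p(\tau)|)^{s_2}dx$. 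Taking $\limsup_{t\to\infty}$ and invoking \eqref{LsupGradp-s} with $s=s_2$ (so $\tilde s=\tilde s_2$), then raising to the power $\nu_1/2$, produces the factors $\Azero^{(2\tilde s_2 z_3+\alpha)\nu_1/2}=\Azero^{\kappa_3\nu_1}$, $\Amore^{\tilde s_2\nu_1}$, $\Afirst^{\nu_1/2}$ and the stated exponential. Meanwhile, writing $p_t=\bar p_t+\Psi_t$, using $\int_U|\Psi_t(x,t)|^2dx\le G_3(t)$, and applying \eqref{lim4} (at times $t$ and $t-1/4$ to cover $(t-3/4,t)$) bounds $\limsup_t\| p_t\|_{L^2(U\times(t-3/4,t))}$ by $C\Azero^{\alpha/2}\Ath^{1/2}$. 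Multiplying these two bounds, using $\Afirst\le\Ath$ so that $\Afirst^{\nu_1/2}\Ath^{1/2}\le\Ath^{s_3/2}$, and noting $\kappa_3\nu_1+\alpha/2=\kappa_6$, we arrive at \eqref{lim7}; the boundary data enters only through the additive term, coming directly from \eqref{ptbound}.

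For part~(ii) the argument is identical, except that under the weaker hypothesis $\beta(\alpha)<\infty$ one uses \eqref{Gradp-sL} (with $s=s_2$) in place of \eqref{LsupGradp-s} and \eqref{large4} in place of \eqref{lim4}, both valid for $t\ge T$ with some $T>3$. Since those estimates carry backward windows of length one and two for $A(\alpha,\cdot)$, $G_3$, $\nabla\Psi$, $\Psi_t$ and $\Psi$, the nesting needed to cover $(t-3/4,t)$ pushes the data-type windows out to $[t-3,t]$ — hence the appearance of $\Kfo(t)$ and $\Kfv(t)$ — and the dissipation window to $\int_{t-2}^tG_3$, exactly as in \eqref{i67}; tracking the powers $\kappa_6$, $\tilde s_2(s_3-1)$, $s_3/2$ and the exponential then gives \eqref{large7}. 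The only delicate point throughout is this bookkeeping: matching $\kappa_6$, $\tilde s_2(s_3-1)$, $s_3/2$ against the powers obtained by raising the gradient estimates of Theorem~\ref{theo55} to the power $\nu_1/2$ and multiplying by the square root of the $\bar p_t$-energy bound, and keeping the several nested time intervals consistent so that the final statement involves only the clean windows $[t-3,t]$, $[t-2,t]$ and $[t-1,t]$.
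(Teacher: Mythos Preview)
Your proposal is correct and follows essentially the same approach as the paper: both start from the time-local estimate \eqref{pt2} (i.e., \eqref{ptbound} with $T_0=t-1$, $T=1$, $\theta=1/2$), bound $\lambda^{s_1/2}$ via the $L^{s_2}$-gradient control, feed in the asymptotic gradient estimates \eqref{LsupGradp-s} or \eqref{Gradp-sL} from Theorem~\ref{theo55}, and close with the energy bounds \eqref{lim4} or \eqref{large4}, using $\Afirst\le\Ath$ and $\kappa_3(s_3-1)+\alpha/2=\kappa_6$ to collect exponents. Your remark about applying \eqref{lim4} at shifted times to cover $(t-3/4,t)$ is a clean way to handle the interval mismatch that the paper leaves implicit.
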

\begin{proof}
Again, let $\nu_1=s_3-1$.

(i)  
Taking  the limit superior of \eqref{pt2} as $t\to\infty$, and using \eqref{LsupGradp-s} with $s= s_2$, and \eqref{lim4}, we obtain
 \begin{multline*}
\limsup_{t\to\infty} \| p_t(t)\|_{L^\infty(U)}
 \le C \Big[ \Azero^{2\kappa_3}\Amore^{2{\tilde s}_2} \Afirst \\
\cdot\exp\Big\{  C'\Azero^{z_3}(1+\limsup_{t\to\infty}(\|\nabla \Psi\|_{L^{\alpha q_1}( U\times (t-1,t))}+ \|\Psi_t(t)\|_{L^{\alpha q_1}( U\times (t-1,t))}))^{z_2}
\Big\} \Big]^{\nu_1/2}\\
\quad \cdot \Big (\Azero^{\alpha} +\limsup_{t\to\infty} \int_{t-1}^tG_3(\tau)d\tau+\limsup_{t\to\infty}\int_{t-1}^t \int_U |\Psi_t(x,\tau)|^2dxd\tau\Big)^{1/2}
+\limsup_{t\to\infty} \|\psi_t\|_{L^\infty(\Gamma)}.
\end{multline*}
Simple manipulations yield
 \begin{multline}\label{last1}
\limsup_{t\to\infty} \| p_t(t)\|_{L^\infty(U)}
 \le C\Azero^{\kappa_3 \nu_1+\alpha/2} \Amore^{{\tilde s}_2 \nu_1}\Afirst^{\nu_1/2}
\cdot \Big(1+ \limsup_{t\to\infty} \int_{t-1}^tG_3(\tau)d\tau\Big)^{1/2}\\
 \cdot\exp\Big\{  C'\Azero^{z_3}(1+\limsup_{t\to\infty}(\|\nabla \Psi\|_{L^{\alpha q_1}( U\times (t-1,t))}+ \|\Psi_t(t)\|_{L^{\alpha q_1}( U\times (t-1,t))}))^{z_2}
\Big\} +\limsup_{t\to\infty} \|\psi_t\|_{L^\infty(\Gamma)}.
\end{multline}
Since $\Afirst\le \Ath$, we have
 $\Afirst^{\nu_1/2}(1+ \limsup_{t\to\infty} \int_{t-1}^tG_3(\tau)d\tau)^{1/2}\le \Ath^{(s_3-1)/2}\Ath^{1/2}=\Ath^{s_3/2}.$ Thus, inequality \eqref{lim7} follows \eqref{last1}.

(ii) Combining inequality \eqref{pt2} with estimate \eqref{Gradp-sL} for $s=\tilde s_2$ and estimate \eqref{large4}, we obtain for large $t$ that
 \begin{align*}
& \| p_t(t)\|_{L^\infty(U)}
 \le C \Big[
\Kfo(t)^{2\kappa_3}
\Kfv(t)^{2{\tilde s}_2}\Big (1+\int_{t-2}^tG_1(\tau)d\tau\Big )\\
&\cdot \exp\Big\{  C'\Kfo(t)^{z_3}(1+ \|\nabla \Psi\|_{L^{\alpha q_1}( U\times (t-3,t))}+\|\Psi_t(t)\|_{L^{\alpha q_1}( U\times (t-3,t))})^{z_2}
\Big\}  \Big]^{\nu_1/2}\\
& \cdot\Big (1+\beta(\alpha)^\frac {\alpha}{\alpha-2a} + \sup_{[t-1,t]}A(\alpha,\tau-1)^\frac{\alpha}{\alpha-a}
+\int_{t-2}^tG_3(\tau)d\tau  +\int_{t-2}^t \int_U |\Psi_t(x,\tau)|^2dxd\tau \Big )^{1/2}\\
&+\max_{[t-1,t]} \|\psi_t\|_{L^\infty(\Gamma)}.
\end{align*} 
Using similar calculations as in part (i) and  the fact $G_1\le G_3$, we obtain \eqref{large7}.
\end{proof}


\textbf{Acknowledgment.} The authors would like to thank Akif Ibragimov and  Alexander Nazarov for their helpful discussions.
L. H. acknowledges the support by NSF grant DMS-1412796.

\def\cprime{$'$}

\end{document}